\numberwithin{equation}{section}
\DeclareMathOperator*{\esssup}{ess\,sup}
\newcommand{\inisp}{\mathcal{S}^\epsilon}
\newcommand{\inise}{\mathcal{S}_d^\epsilon}
\theoremstyle{plain}
\newtheorem{theorem}{Theorem}[section]
\newtheorem*{theorem*}{Theorem}
\newtheorem{lemma}[theorem]{Lemma}
\newenvironment{claim}[1]{\par\noindent\underline{Claim.}\space#1}{}
\newenvironment{claimproof}[1]{\par\noindent\emph{Proof of Claim.}\space#1}{\leavevmode\unskip\penalty9999 \hbox{}\nobreak\hfill\quad\hbox{$\blacksquare$}}
\theoremstyle{remark}
\newtheorem{remark}{Remark}
\theoremstyle{definition}
\newtheorem{definition}[theorem]{Definition}
\title[$L^2$-type a posteriori error analysis]{Theory of shifts, shocks, and the intimate connections to $L^2$-type a posteriori error analysis of numerical schemes for hyperbolic problems}
\author[Giesselmann]{Jan Giesselmann}
\address[Jan Giesselmann]{\newline Department of Mathematics\\ Technische Universität Darmstadt \newline Dolivostraße 15 \newline 64293 Darmstadt, Germany}
\email{giesselmann@mathematik.tu-darmstadt.de}
\author[Krupa]{Sam G.  Krupa}
\address[Sam G. Krupa]{\newline Max Planck Institute for Mathematics in the Sciences \newline Inselstraße 22
\newline 04103 Leipzig, Germany}
\email{Sam.Krupa@mis.mpg.de}
\begin{document}
\date{\today}
\keywords{Conservation laws, entropy conditions, entropy solutions, shocks, a posteriori error estimates, finite volume schemes, discontinuous Galerkin schemes}
\subjclass[2020]{Primary 35L65; Secondary  35L45, 35L67, 65M15, 65M08.}

\thanks{\textbf{Acknowledgment.} 
This material is based upon work supported by the National Science Foundation under Grant No. DMS-1926686.
The authors are grateful for financial support by the German Science Foundation (DFG) via grant TRR~154 (\emph{Mathematical modelling, simulation and optimization using the example of gas networks}), project C05.
}
\begin{abstract}
In this paper, we develop reliable a posteriori error estimates for numerical approximations of scalar hyperbolic conservation laws in one space dimension. 

  Our methods have no inherent small-data limitations and
   are a step towards error control of numerical schemes for systems.  We are careful not to appeal to the Kruzhkov theory for scalar conservation laws.  Instead,
   we derive novel quantitative stability estimates that extend the theory of shifts, and in particular, the framework for proving stability first developed by the second author and Vasseur. This is the first time this methodology has been used for quantitative estimates. 
   
   We work entirely within the context of the theory of shifts and $a$-contraction, techniques which adapt well to systems.  In fact, the stability framework by the second author and Vasseur  has itself recently been pushed to systems [Chen-Krupa-Vasseur. Uniqueness and weak-{BV} stability for {$2\times 2$} conservation
  laws. {\em Arch. Ration. Mech. Anal.}, 246(1):299--332, 2022].

  Our theoretical findings are complemented by a numerical implementation in MATLAB and numerical experiments.

\end{abstract}
\maketitle
\tableofcontents

\section{Introduction}
In this paper, we investigate error estimates for numerical approximations of the scalar conservation law in one space dimension: 

\begin{align}
\label{system}
\begin{cases}
\partial_t u + \partial_x A(u)=0,\mbox{ for } x\in\mathbb{R},\mbox{ } t>0,\\
u(x,0)={u}^0(x)
\end{cases}
\end{align}

where $u\colon\mathbb{R}\times[0,T)\to \mathbb{R}$ is the unknown (for $T>0$ fixed), $u^0\in L^\infty(\mathbb{R})$ is the given initial data, and $A\colon\mathbb{R}\to\mathbb{R}$ is the given \emph{flux function}. In this paper, we only consider $A\in C^2(\mathbb{R})$ which are strictly convex. A \emph{classical} solution to \eqref{system} is a locally Lipschitz function $u\colon\mathbb{R}\times[0,T)\to \mathbb{R}$ which satisfies  $\partial_t u + \partial_x A(u)=0$ almost everywhere and verifies $u(x,0)={u}^0(x)$ for all $x\in\mathbb{R}.$ We are also concerned with \emph{weak solutions} to \eqref{system}, i.e.  locally bounded and measurable functions $u\colon\mathbb{R}\times[0,T)\to \mathbb{R}$ which verify
\begin{align}\label{def_weak}
\int\limits_0^\infty\int\limits_{-\infty}^\infty \partial_t \phi u+\partial_x \phi A(u)\,dxdt +\int\limits_{-\infty}^\infty \phi(x,0)u^0(x)\,dx=0
\end{align}
for every Lipschitz-continuous function $\phi\colon\mathbb{R}\times[0,T)\to \mathbb{R}$ with compact support. In particular, every weak solution is a distributional solution to \eqref{system}. Remark also that every classical solution is also a weak solution.

\vspace{.15in}

A pair of functions $\eta,q\colon\mathbb{R}\to\mathbb{R}$ are called an \emph{entropy} and \emph{entropy-flux}, respectively, for the system \eqref{system} if
\begin{align}\label{compat_cond}
    q'(u)=\eta'(u) A'(u).
\end{align}

We say a weak solution to \eqref{system} is \emph{entropic} for the entropy $\eta$ if it satisfies the \emph{entropy inequality}
\begin{align}
    \partial_t \eta(u)+\partial_x q(u)\leq 0
\end{align}
in the sense of distributions, where $q$ is any corresponding entropy-flux. More precisely, 
\begin{align}\label{entropy_integral}
\int\limits_0^\infty\int\limits_{-\infty}^\infty \partial_t \phi \eta(u)+\partial_x \phi q(u)\,dxdt +\int\limits_{-\infty}^\infty \phi(x,0)\eta(u^0(x))\,dx\geq 0
\end{align}
for every nonnegative Lipschitz-continuous function $\phi\colon\mathbb{R}\times[0,T)\to \mathbb{R}$ with compact support

We have in mind future applications to hyperbolic systems with multiple conserved quantities.  Hyperbolic conservation laws are an important class of models in many physical and engineering applications. One example is the description of non-viscous compressible flows by the Euler equations.  One particular feature of hyperbolic conservation laws that makes their analysis and the construction of reliable numerical schemes challenging is that they, in general, only have smooth solutions up to some finite time. 
For an overview on numerical  schemes for hyperbolic conservation laws and the available convergence analysis we refer to \cite{GR96,Kroner97,LeVeque02,Hesthaven18} and references therein.

In his 2023 high-level survey of the state of conservation laws, Dafermos writes ``Great progress should also be expected in the art and science of scientific computation of solutions [...] On the theoretical side, the supporting numerical analysis of the algorithms employed in the solution of systems is based to a great extent on the theory of the scalar conservation law. The expectation is that progress will be made by bringing closer the numerical and the theoretical analysis of systems'' \cite[p.~485]{MR4627977}. The present work is a step in the program of Dafermos.

The goal of the  work at hand is to provide a first step towards  reliable a posteriori error estimates for Runge-Kutta discontinuous Galerkin schemes for systems of hyperbolic conservation laws in one space dimension. To this end, we provide such estimates for scalar problems without using the Kruzhkov stability theory for scalar conservation laws. 
Instead, we develop an extension, which allows for quantitative estimates, of the theories of shifts and $a$-contraction (see \Cref{sec:stable}, below). Due to being non-perturbative, these techniques can handle large data. Moreover, they require only a single entropy condition, and hence apply to both scalar equations as well as systems. Thus, there is hope that the techniques developed in this paper will extend to systems. This is in contrast to the results in \cite{KO00,DMO07} that use techniques that are systematically limited to scalar problems.

\subsection{Theory of stability}\label{sec:stable}
For nonlinear hyperbolic conservation laws, analysis of systems is much more involved than the analysis of scalar problems. The main difference is that for scalar problems there exist infinitely many entropies that gives rise to a rather restrictive class of entropy (weak) solutions for which existence and uniqueness of solutions as well as an $L^1$ contraction property can be shown. In contrast, systems are often only endowed with one entropy/entropy-flux pair due to the compatibility condition \eqref{compat_cond} being overdetermined in the systems case (for integrability conditions on the entropy, see \cite[p.~13-14]{dafermos_big_book} and \cite[p.~54-55]{dafermos_big_book}).  For many physical systems only one entropy exists; therefore existence and uniqueness of entropy solutions are delicate issues. 

Indeed,  for systems in multiple space dimensions it can be shown via the technique of convex integration that for some set of initial data entropy solutions are not unique \cite{MR3352460,MR2564474}. On the other hand,  in one space dimension Bressan and coworkers \cite{Bressan2000,MR1686652,MR1337114} have shown that for small-$BV$ initial data global entropy solutions exist and satisfy $L^1$ stability estimates in the class of small-$BV$ solutions.

Given a classical solution $\bar{u}$ to a conservation law, and a weak solution $u$ which is entropic for \emph{at least one strictly convex entropy}, the weak-strong stability theory of Dafermos \cite{MR546634,doi:10.1080/01495737908962394} and DiPerna \cite{MR523630} gives $L^2$-type stability estimates between $u$ and $\bar{u}$. Expanding on the theory of relative entropy, the theories of shifts \cite{VASSEUR2008323} and $a$-contraction \cite{MR3519973} have allowed for the method of relative entropy to consider the stability of solutions from a larger class. Recently, the theories of shifts and $a$-contraction have become sufficiently developed to allow for the stability of the well-known small-$BV$ solutions \cite{Bressan2000,MR1686652} to be studied among solutions from the much larger class of solutions taking possibly very large values in $L^2$ \cite{chen2020uniqueness}. However, quantitative stability estimates have up to now been out of the reach of the theory. The theories of shifts and $a$-contraction will be explained in more detail in \Cref{describe_theory}.

\subsection{State of the art in numerical analysis}
In the convergence analysis of numerical schemes, two main classes of error estimates need to be distinguished: \textit{a priori} and \textit{a posteriori} estimates. A priori estimates aim at providing bounds for errors of numerical schemes that only depend on mesh width and features of the exact solution. In contrast, a posteriori estimates require computing the numerical solution but are independent of (usually unknown) quantities such as norms of exact solutions. In a posteriori estimates, it is desirable that the error estimate is \textit{reliable} which means that the estimator is an upper bound of the error (i.e. of some norm of the difference of exact and numerical solution) and \textit{efficient} which means that some (fixed) multiple of the error estimator is also a lower bound for the error. In the context of numerical schemes for hyperbolic conservation laws efficiency of estimators seems out of reach and we will settle for a weaker property -- see the discussion in Section \ref{sec:op-ed}.

There are also some ``intermediate'' classes of estimates where \textit{ a priori} convergence is guaranteed if some criterion that can only be verified \textit{a posteriori}, i.e. once the numerical solution has been computed, is satisfied. Indeed, interesting recent work by Bressan-Chiri-Shen \cite{Bressan2020} shows that if numerical solutions from a rather general class of numerical schemes satisfy an a posteriori stability condition and have  small BV then they converge in $L^\infty(0,T; L^1(\mathbb{R}))$ to the exact solution. Indeed, the results in \cite{Bressan2020} provide error bounds that are related to the consistency error in the $L^1(0,T;W^{-1,1}(\mathbb{R})) $ norm. The Bressan-Chiri-Shen work \cite{Bressan2020} is limited to small BV due to its reliance on the classical theory, e.g. \cite{MR1337114}. Instead of providing a priori bounds for these consistency errors for specific schemes (as was done in \cite{Bressan2020}) one can also compute them a posteriori (for a wider variety of numerical schemes) -- which is being done in the recent preprint \cite{GiesselmannSikstel2023}. For a comparison of convergence rates of error estimators see \Cref{sec:op-ed}.

The first systematic a posteriori analysis for numerical approximations of scalar conservation laws, accompanied with corresponding adaptive algorithms, can be traced back to \cite{KO00,GM00} (see also \cite{Coc03,DMO07} and references therein).  These estimates were derived by employing Kruzhkov’s doubling of variables technique using an infinite family of entropies \cite{MR0267257}.  A posteriori results for systems were derived in \cite{Laforest04} for front tracking schemes. However, these results are limited to solutions with small BV due to limitations in the $L^1$ front tracking theory \cite{MR1723032,Bressan2000}. For Glimm's scheme, a posteriori results are given in \cite{Laforest08}, albeit restricted to Burgers equation with non-increasing initial data. Goal oriented estimates, based on dual weighted residuals, have been investigated in \cite{HartmannHouston2003}. However, the goal oriented estimates require the solution of dual problems that consist of advection equations with discontinuous velocity fields. Their well-posedness theory is only understood for scalar problems in one space dimension \cite{Ulbrich2002}. Indeed,  for discontinuous solutions to systems of conservation laws the well-posedness of dual problems is unclear and, therefore, dual weighted residual-type estimates cannot be proven to be reliable.

We would like to mention recent a priori  convergence results on  numerical schemes for systems of hyperbolic conservation laws: In \cite{FLM20} convergence of finite volume schemes to so-called dissipative solutions is shown while in \cite{FLMW20} convergence to statistical solutions is proven. Both results are based on compactness arguments so that no quantitative information on the size of the error can be extracted. Indeed, quantitative information on the size of the error seems too much to ask for in this context since there is no uniqueness of solutions.

\subsection{Our stratagem}

Our approach follows the general ideas of reconstruction-based a posteriori error estimates as outlined in \cite{Mak07}.
The fundamental idea is to compute a reconstruction of the numerical solution that satisfies all conditions necessary so that its difference to the exact solution can be bounded in terms of the residual using some appropriate stability theory (that is the theory of shifts and $a$-contraction in our case). By ``residual'' we refer to the non-zero right-hand side of \eqref{system} obtained as ``error'' when the reconstructed solution is inserted into the PDE.
A similar approach, but using the  original relative entropy estimates of Dafermos and DiPerna as stability theory, was used for hyperbolic systems in \cite{DednerGiesselmann,GiesselmannMakridakisPryer}. \emph{The present paper improves upon the results in \cite{DednerGiesselmann,GiesselmannMakridakisPryer} as it provides an estimator that is convergent post shock formation.} This is intimately linked with the ability of the theory of shifts to consider solutions from a broader class than the original weak-strong theory. In particular, the theory of shifts is not restricted to studying the stability of a smooth solution $\bar{u}$ among the class of weak solutions $u$, but can in fact allow for arbitrarily many shocks to exist in the otherwise smooth solution $\bar{u}$.

As discussed above, the theory of shifts and $a$-contraction are well-known to have no small-data limitations, and in particular are able to study large shocks (see e.g \cite{Leger2011,MR3519973}). Thus, the goal of future work is to push forward the ideas and results in the present paper to systems. In this way, we hope to study a posteriori stability of large-data solutions to hyperbolic systems.


Using the theory of shifts comes with some cost though.
As mentioned above, the theory of shifts allows us to generalize the relative entropy theory to allow for shocks in an otherwise smooth solution $\bar{u}$. Thus, the core of our proof involves piecewise-smooth approximate solutions. However, the theory of shifts must allow for the discontinuities in $\bar{u}$ to move in largely unpredictable ways, not necessarily following the Rankine-Hugoniot jump condition (see \Cref{describe_theory} for more on this). The cost is: due to the uncertainty in shock position, we are required to numerically simulate extensions for each of the continuous pieces of $\bar{u}$. These extensions are global in space and time.

As a consequence, we need to solve not one initial value problem numerically but as many as there are discontinuities in the initial data and we need to track approximate positions and position error bounds for each discontinuity so that we know which numerical solution might be revealed where and when, in comparison with the exact solution. 
This might seem like a huge computational burden severely increasing computational cost but since all the individual solutions will remain smooth we may use high order numerical schemes without stabilization mechanisms which mitigates the increase in computational costs. 
For each of the numerical solutions we will use reconstructions as described in \cite{DednerGiesselmann,GiesselmannMakridakisPryer} since (for certain classes of numerical fluxes) they lead to residuals that scale optimally with mesh width for smooth solutions.

Indeed, for initial data that consists of smooth, increasing pieces with down jumps in between we obtain a posteriori error estimates where the estimators are $\mathcal{O}(h)$ for first order finite volume schemes on meshes with mesh width $h$. Then, let us for a moment consider numerical schemes on meshes that move such that any nondifferentiable points in our solution are always aligned with some cell boundary of the mesh.
Then, for initial data that also contain (smooth) decreasing pieces our error estimators are $\mathcal{O}(h^{1/3})$. For more details, see \Cref{sec:op-ed}.
Known a posteriori error estimators (for first order finite volume schemes ) are of order $\mathcal{O}(h^{1/2})$ for scalar problems \cite{KO00}. Remark that for systems in one space dimension, Bressan-Chiri-Shen gives an a posteriori error estimator of order $\mathcal{O}(h^{1/3})$ \cite{Bressan2020}. 

\subsection{Plan for the paper}

The remainder of this paper is organized as follows: in \Cref{sec:statement} we introduce basic concepts and present our main results. In \Cref{describe_theory}, we give a brief introduction to the key ideas behind the theories of shifts and $a$-contraction, and we sketch how we will use them to derive quantitative estimates. In \Cref{sec:technical}, we give some technical lemmas we will need. \Cref{construction_section} is devoted to the description of our numerical scheme and the construction of the key ancillary function $\hat\psi$. In \Cref{algo_section} we describe our error estimator. In \Cref{Revelation_section}, we collect the key calculations used in our error estimator (including the proof of our essential estimate \eqref{control_l21206}, below). In \Cref{sec:proof_conv_thm}, we prove \Cref{conv_thm}. \Cref{sec:op-ed} is devoted to heuristics of the expected convergence behavior of our error estimator when using dG schemes with polynomials of degree $0$ and an upwind type numerical flux. Finally, in \Cref{sec:numerical_experiments} we introduce the MATLAB implementation of our numerical scheme and associated error estimator and give the results of numerical experiments.

\section{Statement of result and basic concepts}\label{sec:statement}
We are concerned with approximate solutions $\hat{u}$ to our conservation law, i.e.  $\hat{u}\colon\mathbb{R}\times[0,T)\to \mathbb{R}$ for some $T>0$ that are obtained by gluing together approximate solutions that are in turn obtained from Lipschitz-continuous reconstructions of numerical solutions (analogously to what was done in \cite{GiesselmannMakridakisPryer,DednerGiesselmann}). The gluing process allows $\hat{u}$ to have downward jumps and this is why  we are able to obtain much stronger results than those that were obtained in \cite{GiesselmannMakridakisPryer,DednerGiesselmann}.

Throughout this paper, we will consider initial data in the following two forms. The first form is the one required for the exact initial data
\begin{equation}\label{PL}
\begin{aligned}
\inisp:= 
\{ v\in &L^\infty (\mathbb{R}) \, : \, \exists N\in \mathbb{N},  -\infty=x_0<x_1<\cdots<x_N<x_{N+1}=+\infty  \, : \,
\text{ for } i=0,\ldots,N, \\
&\text{$v'(x)$ exists and is uniformly continuous for $x\in (x_i,x_{i+1})$, $v(x_i -)\geq v(x_i +)$} \\
&\text{ (when $i\neq 0$), and exactly one of two cases hold: either for all } x\in (x_i,x_{i+1}),
\\
&\text{we have } \partial_x v|_{(x_i,x_{i+1})}>-\epsilon \text{ or for all } x\in (x_i,x_{i+1}) \text{, we have } \partial_x v|_{(x_i,x_{i+1})}\leq -\epsilon. 
\\
&\text{Lastly, if $v|_{(x_i,x_{i+1})}>-\epsilon$ and $v|_{(x_{i+1},x_{i+2})}>-\epsilon$,}
\\
&\text{then  $v(x_{i+1} -) > v(x_{i+1} +)$ (for $i\neq N$)}  \},
\end{aligned}
\end{equation}
where $\epsilon >0$ is a number that is fixed from now on and needs to be chosen such that up to a (fixed) time $T>0$ no slopes form which are less than $-1$, from any initial data with slope $> - \epsilon$. Notice there are no smallness assumptions for the class of functions $\inisp$.

For a function $v\in\inisp$, we will call it \emph{rapidly decreasing} on intervals $(x_i,x_{i+1})$ where $\partial_x v|_{(x_i,x_{i+1})}\leq -\epsilon$ and \emph{nearly non-decreasing} on intervals where $\partial_x v|_{(x_i,x_{i+1})}>-\epsilon$. See \Cref{shocks_fig}. 

Remark that because $\inisp \subset L^\infty(\mathbb{R})$, we must have that for $v\in\inisp$, $v$ is nearly non-decreasing on the intervals $(x_0,x_1)$ and $(x_N,x_{N+1})$ (where the $x_0<x_1<\cdots<x_{N+1}$ are in the context of \eqref{PL}).

The second form is what we require for the initial data of the reconstructed numerical solution.
\begin{equation}
\begin{aligned}\label{LIWASe}
\inise:= 
\{ v\in L^\infty &(\mathbb{R}) \, : \, \exists N\in \mathbb{N},  -\infty=x_0<x_1<\cdots<x_N<x_{N+1}=+\infty  \, : \,
\text{ for } i=0,\ldots,N, \\ 
&\text{$v'(x)$ exists and is uniformly continuous for $x\in (x_i,x_{i+1})$,}
\\
&\partial_x v|_{(x_i,x_{i+1})}  > -\epsilon, \text{ and }
v(x_i -)>v(x_i +) \}
\end{aligned}
\end{equation}

This is closely related to the function space that is considered in \cite{2017arXiv170905610K} for sequences of approximate solutions.

Note that $\inisp$ is a larger space than $\inise$ since some of the pieces are allowed to decrease strongly. Note also that functions in $\inise$ are dense in $\inisp$ (in the $L^p_{\text{loc}}$ topology, $1\leq p<\infty$). This can be seen by approximating the rapidly decreasing parts by piecewise-constant functions.

Following the general theory of shifts and $a$-contraction, we study solutions to \eqref{system} among the class of functions verifying a Strong Trace Property (first introduced in \cite{Leger2011}):

\begin{definition}[Strong Trace Property]\label{strong_trace_definition}
Fix $T>0$. Let $u\colon\mathbb{R}\times[0,T)\to\mathbb{R}$ verify $u\in L^\infty(\mathbb{R}\times[0,T))$. We say $u$ has the \emph{Strong Trace Property} if for every fixed Lipschitz-continuous map $h\colon [0,T)\to\mathbb{R}$, there exists $u_+,u_-\colon[0,T)\to\mathbb{R}$ such that
\begin{align}
\lim_{n\to\infty}\int\limits_0^{t_0}\esssup_{y\in(0,\frac{1}{n})}\abs{{u}(h(t)+y,t)-u_+(t)}\,dt=\lim_{n\to\infty}\int\limits_0^{t_0}\esssup_{y\in(-\frac{1}{n},0)}\abs{{u}(h(t)+y,t)-u_-(t)}\,dt=0
\end{align}
for all $t_0\in(0,T)$.
\end{definition}

E.g., a function $u\in L^\infty(\mathbb{R}\times[0,T))$ satisfies the Strong Trace Property if, for each fixed $h$, the right and left limits
\begin{align}
\lim_{y\to0^{+}}u(h(t)+y,t)\hspace{.7in}\text{and}\hspace{.7in}\lim_{y\to0^{-}}u(h(t)+y,t)
\end{align}
exist for almost every $t$. Specifically, a function $u\in L^\infty(\mathbb{R}\times[0,T))$ has the strong traces according to \Cref{strong_trace_definition} if $u$ has a representative that is in $BV_{\text{loc}}$. However, the Strong Trace Property is weaker than $BV_{\text{loc}}$. Remark that if a weak solution $u$ to \eqref{system} verifies the entropy inequality \eqref{entropy_integral} for \emph{every} convex entropy function $\eta$ (and associated entropy-flux $q$), then classically it is known that $u(\cdot,t)$ instantaneously regularizes to $BV_{\text{loc}}$ for every $t>0$ (see e.g. \cite{dafermos_big_book,serre_book}), and thus the Strong Trace Property is verified. However, \emph{nota bene}: in the present paper we are assuming very little on weak solutions $u$ to \eqref{system}, in particular only verifying \eqref{entropy_integral} for one convex entropy, in analogy with the systems case, and thus \emph{we do not have the classical theory available to us}.

We can now summarize some of our main results.
\begin{theorem}[Main Theorem]\label{main_theorem}
 Fix $T>0$. Then choose $\epsilon>0$ such that smooth solutions to \eqref{system} with initial data whose slope is at least $-\epsilon$ do not form slopes less than -1 (and in particular do not form discontinuities) on the time interval $[0,T]$. 
 
 Let $\bar{u}$ be an exact solution to our conservation law \eqref{system} with initial data $\bar{u}^0 \in \inisp$. Assume $\bar{u}$ verifies the entropy inequality \eqref{entropy_integral} for the strictly convex entropy $\eta\in C^2(\mathbb{R})$. Assume also that $\bar{u}$ verifies the Strong Trace Property (\Cref{strong_trace_definition}). 
 
 Recalling that $\bar{u}^0 \in \inisp$, let $\bar N \in \mathbb{N} $ be the minimum integer required within the context of the definition of the space $\inisp$ (this prevents us from needlessly breaking the intervals $(x_i,x_{i+1})$ into smaller intervals). Then, let $x_1,...,x_{\bar N}$ denote the points from within the context of the definition of the space $\inisp$.
 
 Fix a (sufficiently small) parameter $\delta>0$. Then, there exists $N^\delta \in \mathbb{N}$ and Lipschitz-continuous functions
 $v^\delta_1, ..., v^\delta_{N^\delta+1}\colon  \mathbb{R} \times  [0,T] \rightarrow  \mathbb{R}$ that verify $\partial_t v^\delta_i+\partial_x A(v^\delta_i)=0$ (for $i=1,\ldots,N^\delta+1$) and verify the ordering condition $ v^\delta_i(x,0) - v^\delta_{i+1}(x,0)\geq \frac{1}{2} \epsilon\delta>0 \ \forall i \text{ and } \forall x$.

 Then, let the $v^\delta_1, ..., v^\delta_{N+1}$ be simulated using a numerical scheme, giving reconstructed Lipschitz-continuous functions $\hat v^\delta_1, ..., \hat v^\delta_{N^\delta+1}\colon  \mathbb{R} \times  [0,T] \rightarrow  \mathbb{R}$. We assume the $\hat v^\delta_1, ..., \hat v^\delta_{N^\delta+1}$ also satisfy the ordering condition 
 \begin{align}\label{ordering_main_thm}
 \hat v^\delta_i(x,t) > \hat v^\delta_{i+1}(x,t)\  \forall i \text{ and }\forall (x,t),
 \end{align}
 which can be checked a posteriori.

 There also exist Lipschitz-continuous curves $\hat h^\delta_1, ...,\hat h^\delta_{N^\delta}\colon [ 0,T] \rightarrow \mathbb{R}$ that satisfy the ordering condition $\hat h^\delta_i (t) \leq \hat h^\delta_{i+1}(t) \, \forall i \text{ and }\forall t$ and also verify
 \[ \{ x_1, \ldots,  x_{\bar N}\} \subset \{ \hat h^\delta_1(0), \ldots, \hat h^\delta_{N^\delta}(0)\}.\]

Then, we define the numerical solution to our conservation law,

 \begin{align}
\hat{u}^\delta(x,t)\coloneqq
    \begin{cases}
    \hat{v}^\delta_1(x,t) &\text{if } x<\hat{h}^\delta_1(t),\\
    \hat{v}^\delta_2(x,t) &\text{if } \hat{h}^\delta_1(t)<x<\hat{h}^\delta_2(t),\\
    &\vdots\\
    \hat{v}^\delta_{N^\delta+1}(x,t) &\text{if } \hat{h}^\delta_{N^\delta}(t)<x.\\
    \end{cases}
\end{align}

We similarly define the ancillary function ${\hat{\psi}^\delta}\colon\mathbb{R}\times[0,T]\to\mathbb{R}$,
\begin{align}
{\hat{\psi}}^\delta(x,t)\coloneqq
    \begin{cases}
    \hat{v}^\delta_1(x,t) &\text{if } x<{h}^\delta_1(t),\\
    \hat{v}^\delta_2(x,t) &\text{if } {h}^\delta_1(t)<x<{h}^\delta_2(t),\\
    &\vdots\\
    \hat{v}^\delta_{N^\delta+1}(x,t) &\text{if } {h}^\delta_{N^\delta}(t)<x,\\
    \end{cases}
\end{align}
for Lipschitz-continuous curves $h^\delta_1, ...,h^\delta_{N^\delta}\colon [ 0,T] \rightarrow \mathbb{R}$ that satisfy the ordering condition $h^\delta_i (t) \leq h^\delta_{i+1}(t) \, \forall i \text{ and }\forall t$. Furthermore, $\hat{h}^\delta_i(0)=h^\delta_i(0)\, \forall i$ (and thus $\hat u^\delta(\cdot,0)=\hat\psi^\delta(\cdot,0)$).  The $h^\delta_i$ are generalized characteristics of the exact solution $\bar{u}$.

Denote by $\mathcal{I}^\delta$ the set of indices belonging to curves $\hat h^\delta_i$ whose starting point coincides with one of the $x_j$.

We denote by $\mathcal{D}$ the set of \emph{rapidly decreasing} parts of $\bar u^0$.

 Then, the distance between $\bar u$ and the approximate solution $\hat u^\delta$ 
 is controlled as follows:
 
Fix $S>0$ sufficiently large.  First, we have the (obvious) estimate for every $L^p$ space
\begin{multline} \label{est:uhu}
    \norm{\bar{u}(\cdot,t)-\hat{u}^\delta(\cdot,t)}_{L^p(-S+st,S-st)}
        \\
   \leq \norm{\bar{u}(\cdot,t)-{\hat{\psi}^\delta}(\cdot,t)}_{L^p(-S+st,S-st)}+\norm{{\hat{\psi}^\delta}(\cdot,t)-\hat{u}^\delta(\cdot,t)}_{L^p(-S+st,S-st)},
\end{multline}
where $s>0$ is the speed of information, a constant depending only on $\norm{\bar{u}}_{L^\infty}$, the flux $A$, and the entropy $\eta$.

Furthermore,
\begin{equation}\label{est:hphu}
    \norm{{\hat{\psi}^\delta}(\cdot,t)-\hat{u}^\delta(\cdot,t)}_{L^1(-S+st,S-st)}\leq  
    \int_{\mathcal{B}(t)} | \hat v^\delta_{m(x,t)} - \hat v^\delta_{M(x,t)} | dx
    +  \sum_{d \in \mathcal{D}} \Big[\Gamma_d(t)+2S\Upsilon_d(t)\Big],
\end{equation}
where $\Gamma_d(t)$ is a computable quantity defined in \eqref{eq:Bressan}, $\Upsilon_d(t)$ is a computable quantity defined in\eqref{eq:psipsift}, 
\[ \mathcal{B}(t):=\{ x \in \mathbb{R} : \exists j \in \mathcal{I}^\delta : x \in [\hat h^\delta_j(t)-  \Delta_j(t) ,\hat h^\delta_j(t)+  \Delta_j(t) ] \}\]
denotes regions (around the $\hat{h}^\delta_j$, $j\in\mathcal{I}^\delta$) where it is unclear which part of the solution is revealed,
and for $x \in \mathcal{B}(t)$
\begin{align}
    \mathcal{J}(x,t) & := \{ j  :x \in [\hat h^\delta_j(t)-  \Delta_j(t) ,\hat h^\delta_j(t)+  \Delta_j(t)]\} \\
    m(x,t) & := \min \{i \in  \mathcal{J}(x,t)\}\\
    M(x,t) & := \max \{i \in  \mathcal{J}(x,t)\} +1.
\end{align}

In all of the above $\Delta_i(t)$ is a (computable) bound on  $\abs{\hat{h}^\delta_i(t)-h^\delta_i(t)}$ (the uncertainty of the position of the $i^{th}$ discontinuity at time $t$). For formulas for $\Delta_i$, see \eqref{def_control_shift}, \eqref{eq:deltabs}, and \Cref{sec:triangles}.

Finally,
\begin{align}\label{control_l21206}
    \norm{\bar{u}(\cdot,t)-{\hat{\psi}^\delta}(\cdot,t)}^2_{L^2(-S+st,S-st)}\leq 
    C\Bigg(
    \norm{\bar{u}(\cdot,0)-{\hat{\psi}^\delta}(\cdot,0)}^2_{L^2(-S,S)} + \mathcal{R}(t) \Bigg)e^{C t},
\end{align}
with 
\begin{equation}\label{def:cR} \mathcal{R} (t):= \int\limits_{0}^{t}
\int\limits_{-S+sr}^{S-sr} \max_{i \in \tilde J(x,r)} (R^\delta_i(x,r))^2\,dx dr
\end{equation}
and where 
\begin{align}
\tilde J(x,t):=
\{ j : \hat h^\delta_j(t) - \Delta_j(t) < x < \hat h^\delta_{j+1} (t) + \Delta_j(t) \}
\end{align}
is the set of potentially revealed solutions, and 
the residuals are defined by 
\begin{equation}\label{system-numerical}
R^\delta_i:= \partial_t \hat v^\delta_i + \partial_x A(\hat v^\delta_i).
\end{equation}

The constant $C>0$ can be explicitly given in terms of  $\norm{\bar{u}}_{L^{\infty}}$, the  flux $A$, entropy $\eta$ and their derivatives and $\norm{\Big[\partial_x \bigg|_{(x,t)} \eta'({\hat{\psi}^\delta})\Big]_{-}}_{L^\infty(K)}$, where $[\hspace{.03in}\cdot\hspace{.03in}]_{-}\coloneqq \min(0,\cdot)$ and $K:= \{(x,t) : {t \in [0,T), \ }-S + st < x < S - st \}$.
\end{theorem}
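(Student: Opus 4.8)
\emph{Proof sketch.} Estimate \eqref{est:uhu} is nothing but the triangle inequality in $L^p$, so the genuine content is \eqref{est:hphu} and, above all, the essential estimate \eqref{control_l21206}. The plan is to run the relative-entropy method of Dafermos and DiPerna in the localized, single-entropy form of Krupa--Vasseur, using the ancillary function $\hat\psi^\delta$ as an intermediary between $\bar u$ and $\hat u^\delta$. Write $\eta(a\,|\,b):=\eta(a)-\eta(b)-\eta'(b)(a-b)$, $A(a\,|\,b):=A(a)-A(b)-A'(b)(a-b)$, and $q(a\,|\,b):=q(a)-q(b)-\eta'(b)(A(a)-A(b))$; since $\eta\in C^2$ is strictly convex and all functions in sight are uniformly bounded, on the relevant range one has $c_1\abs{a-b}^2\le\eta(a\,|\,b)\le c_2\abs{a-b}^2$ and $\abs{A(a\,|\,b)}+\abs{q(a\,|\,b)}\le c_3\abs{a-b}^2$, so \eqref{control_l21206} is equivalent up to the constant $C$ to a Gronwall bound on $t\mapsto\int\eta(\bar u\,|\,\hat\psi^\delta)\varphi\,dx$, where $\varphi$ is a Lipschitz cutoff supported in $K$ and approximating its indicator. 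The reason $\hat\psi^\delta$, and not $\hat u^\delta$, is what gets compared with $\bar u$ is precisely the change-of-variables problem: the discontinuity positions of $\bar u$ are unknown, so nothing computed can be aligned with them; instead one assembles $\hat\psi^\delta$ from the same smooth pieces $\hat v^\delta_i$ but with interfaces on the generalized characteristics $h^\delta_i$ of $\bar u$ -- the ``correct shifts'' of the theory of shifts -- and pays for the mismatch between $h^\delta_i$ and the simulated $\hat h^\delta_i$ separately, in \eqref{est:hphu}.

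First I would differentiate $t\mapsto\int\eta(\bar u(\cdot,t)\,|\,\hat\psi^\delta(\cdot,t))\varphi(\cdot,t)\,dx$ in time, decomposing $\mathbb{R}$ into the open strips $\{h^\delta_{i-1}(t)<x<h^\delta_i(t)\}$ on which $\hat\psi^\delta$ equals the Lipschitz piece $\hat v^\delta_i$ and $\bar u$ is a weak solution entropic for $\eta$. On each strip the DiPerna relative-entropy identity, now carrying the source $R^\delta_i$ of \eqref{system-numerical} because the reconstructions do not solve the PDE exactly, reads $\partial_t\eta(\bar u\,|\,\hat v^\delta_i)+\partial_x q(\bar u\,|\,\hat v^\delta_i)=-\partial_x\big(\eta'(\hat v^\delta_i)\big)\,A(\bar u\,|\,\hat v^\delta_i)-\eta''(\hat v^\delta_i)\,R^\delta_i\,(\bar u-\hat v^\delta_i)$, and after multiplying by $\varphi$, integrating, and summing over strips it produces four families of terms: (i) the residual term, bounded by Young's inequality by $C\int\varphi\,\eta(\bar u\,|\,\hat\psi^\delta)+C\int\varphi\,(R^\delta_i)^2$, whose second half -- once one replaces $(R^\delta_i)^2$ by $\max_{i\in\tilde J(x,t)}(R^\delta_i)^2$ to obtain a computable a~posteriori quantity and integrates in time -- is exactly $\mathcal{R}(t)$ of \eqref{def:cR}; (ii) the term $-\int\varphi\,\partial_x\big(\eta'(\hat v^\delta_i)\big)\,A(\bar u\,|\,\hat v^\delta_i)$, where $A(\bar u\,|\,\hat v^\delta_i)\ge0$ by convexity of $A$, so it is harmless where $\partial_x\eta'(\hat v^\delta_i)\ge0$ and is otherwise bounded by $\norm{[\partial_x\eta'(\hat\psi^\delta)]_-}_{L^\infty(K)}\int\varphi\,\eta(\bar u\,|\,\hat\psi^\delta)$ -- finite precisely because every piece is \emph{nearly non-decreasing} ($\partial_x\hat v^\delta_i>-\epsilon$), which is the origin of the corresponding factor in $C$; (iii) the cutoff terms $\int\big(\partial_t\varphi\,\eta(\bar u\,|\,\hat\psi^\delta)+\partial_x\varphi\,q(\bar u\,|\,\hat\psi^\delta)\big)$, which are $\le0$ once the cone speed $s$ dominates $\abs{q(a\,|\,b)}/\eta(a\,|\,b)$ on the relevant range, a quantity controlled by $\norm{\bar u}_{L^\infty}$, $A$ and $\eta$; and (iv) the interface terms $\sum_i T_i(t)$ discussed below. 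Putting (i)--(iv) together leaves $\frac{d}{dt}\int\eta(\bar u\,|\,\hat\psi^\delta)\varphi\le C\int\eta(\bar u\,|\,\hat\psi^\delta)\varphi+\frac{d}{dt}\mathcal{R}(t)+\sum_i T_i(t)$, and once $\sum_i T_i\le0$ is established, Gronwall's lemma plus the comparability $\eta(a\,|\,b)\sim\abs{a-b}^2$ gives \eqref{control_l21206}.

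The main obstacle is the interface terms $T_i$ together with the loss of control on small shocks. Applying the divergence theorem strip-by-strip and using the one-sided strong traces $\bar u_\pm$ of $\bar u$ at $h^\delta_i$ (available since $h^\delta_i$ is Lipschitz) together with the continuity of the pieces, one finds $T_i(t)=\dot h^\delta_i\big[\eta(\bar u_-\,|\,\hat v^\delta_i)-\eta(\bar u_+\,|\,\hat v^\delta_{i+1})\big]-\big[q(\bar u_-\,|\,\hat v^\delta_i)-q(\bar u_+\,|\,\hat v^\delta_{i+1})\big]$, all traces taken at $x=h^\delta_i(t)$. Because $\bar u$ is a weak solution with strong traces it obeys the Rankine--Hugoniot relation across the Lipschitz curve $h^\delta_i$, and because it is entropic it dissipates entropy across $h^\delta_i$; adding that nonnegative dissipation to $T_i$ yields exactly the ``shift'' quantity analysed in Krupa--Vasseur. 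Since $h^\delta_i$ is a generalized characteristic of $\bar u$ -- so its speed lies in the range fixed by the characteristic speeds of $\bar u_\pm$ -- and since $\hat v^\delta_i>\hat v^\delta_{i+1}$ is a down-jump, the entropy-admissible shock shape for convex $A$, the shift argument gives $T_i(t)\le0$, up to error terms controlled by the residuals $R^\delta_i$ near the interface (these enter because the artificial states $\hat v^\delta_i(h^\delta_i,t)$, $\hat v^\delta_{i+1}(h^\delta_i,t)$ are not themselves Rankine--Hugoniot related). The remaining difficulty is that the shift construction degenerates as $\hat v^\delta_i-\hat v^\delta_{i+1}\to0$, which is unavoidable when a rapidly decreasing part of $\bar u^0$ is approximated by many small down-jumps; this loss of control on small shocks is resolved by the interplay of the spaces $\inisp$, $\inise$ and the parameter $\delta$ -- the uniform vertical separation $\hat v^\delta_i(x,0)-\hat v^\delta_{i+1}(x,0)\ge\tfrac12\epsilon\delta$ imposed in the hypotheses (and the strict down-jumps forced between adjacent nearly non-decreasing pieces of $\inisp$) guarantee that the shift estimate never encounters a small shock, the price -- a proliferation of interfaces as $\delta\to0$ -- being charged to the uncertainty set $\mathcal{B}(t)$ and the quantities $\Gamma_d$, $\Upsilon_d$ rather than to \eqref{control_l21206}.

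It remains to assemble the pieces. For \eqref{est:hphu}, note that $\hat\psi^\delta$ and $\hat u^\delta$ are built from the \emph{same} smooth pieces $\hat v^\delta_j$ and differ only where the true interface $h^\delta_i$ and the simulated interface $\hat h^\delta_i$ disagree; since $\abs{\hat h^\delta_i(t)-h^\delta_i(t)}\le\Delta_i(t)$, that region sits inside $\mathcal{B}(t)$, where both functions equal one of the pieces $\hat v^\delta_{m(x,t)}$, $\hat v^\delta_{M(x,t)}$, contributing $\int_{\mathcal{B}(t)}\abs{\hat v^\delta_{m(x,t)}-\hat v^\delta_{M(x,t)}}\,dx$, while each rapidly decreasing part $d\in\mathcal{D}$ contributes the Bressan-type residual quantity $\Gamma_d(t)$ and the interface-motion quantity $2S\,\Upsilon_d(t)$. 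Inserting \eqref{control_l21206} and \eqref{est:hphu} into the triangle inequality \eqref{est:uhu}, and choosing $S$ large and $\epsilon$ small enough that no slope below $-1$ forms before $T$ -- which is exactly what renders the whole $\inisp$ construction consistent, as assumed -- delivers the full statement.
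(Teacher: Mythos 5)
Your proposal is correct and follows the paper's route for \Cref{main_theorem}: the triangle inequality for \eqref{est:uhu}, a strip-wise relative entropy identity with residual source terms between the generalized characteristics $h^\delta_i$ of $\bar u$, the sign of the resulting interface terms via the dissipation estimate \Cref{neg_entropy_diss}, a Gronwall argument on the cone of information to obtain \eqref{control_l21206}, and the position uncertainties $\Delta_i$ plus the front-tracking quantities $\Gamma_d,\Upsilon_d$ for \eqref{est:hphu}. One minor correction to your step (iv): the interface terms $T_i=F_i^+-F_i^-$ are $\leq 0$ \emph{exactly}, with no residual error near the interface -- \Cref{neg_entropy_diss} never requires the artificial traces $\hat v^\delta_i(h^\delta_i(t),t)$, $\hat v^\delta_{i+1}(h^\delta_i(t),t)$ to be Rankine--Hugoniot compatible (that freedom is precisely the content of the theory of shifts, since the interface moves with the Rankine--Hugoniot speed of $\bar u$'s own traces), and the residuals $R^\delta_i$ contribute only to the strip integrals and thence to $\mathcal{R}(t)$.
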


 We use the notation $\Delta_i$ to denote an upper bound on the difference $\abs{\hat{h}^\delta_i(t)-h^\delta_i(t)}$ for a shock with position $\hat{h}^\delta_i$ in the computer and the corresponding generalized characteristic with position $h^\delta_i$ in the corresponding exact solution.

In fact, we are able to prove much more than \Cref{main_theorem}. See \Cref{algo_section} for a precise discussion of the various quantitative estimates we have on $\Delta_i$ for shocks of various sizes.

We implement our scheme (for the construction of $\hat u$) and our error estimator, following the analysis in the present paper, in MATLAB\footnote{See GitLab: \url{https://git-ce.rwth-aachen.de/jan.giesselmann/shiftsshocksaposteriori.git}}.

\begin{remark}
    We will often drop the superscript $\delta$ from the $\hat u^\delta$, $\hat \psi^\delta$, $v^\delta_i$, $\hat v^\delta_i$, $h^\delta_i$, $\hat h^\delta_i$,  $N^\delta$, and $R^\delta_i$.
\end{remark}

\begin{remark}
    The expected convergence rate for our a posteriori error estimator differs depending on whether or not the initial data $\bar{u}^0$ contains a rapidly decreasing piece  -- even for first order finite volume schemes.
    In what follows, we will denote the mesh width used in the simulations of the $v_i$ as $h$ and the time step as $\Delta t$. We use explicit time-stepping such that the CFL condition implies $\Delta t \lesssim h$.
    In case the initial data $\bar{u}^0$ consists only of nearly non-decreasing pieces separated by shocks (with a size independent of $h$), our error estimator is  $\mathcal{O}(h)$ but not $o(h)$ for stable first order schemes and we expect higher order convergence of the estimator for stable higher order schemes and sufficiently smooth (pieces of the) initial data.
    In contrast, if the initial data $\bar{u}^0$ contains at least one rapidly decreasing piece we only observe order $h^{1/4}$ scaling of our estimator. This is due to very limited control of shocks originating from the boundary of rapidly decreasing intervals. See \Cref{sec:op-ed} for more details.
\end{remark}

\begin{remark}
The functions $\hat h^\delta_i$ are (numerical approximations of) generalized characteristics for the numerical solution $\hat u^\delta$. The functions $h^\delta_i$ are the corresponding generalized characteristics for the exact solution $\bar{u}$. The function ${\hat{\psi}^\delta}$ is the ``shifted'' version of the numerical solution. It is not necessarily a solution to the conservation law. In particular, its shocks are not necessarily travelling with the correct Rankine-Hugoniot speed. 
\end{remark}

\begin{remark}
While for purposes of estimating $\norm{\hat{u}^\delta(\cdot,t)-{\hat{\psi}^\delta}(\cdot,t)}_{L^2}$ we require that $\bar{u}$ have initial data $\bar{u}^0 \in \inisp$, the theory of shift techniques we employ require no regularity assumptions on $\bar{u}$ for positive times $t>0$ besides verifying the Strong Trace Property (\Cref{strong_trace_definition}). This is also a general feature of the $a$-contraction theory for the case of systems of conservation laws: there are no regularity assumptions on the weak, entropic solution in the first slot of the relative entropy $\eta(\cdot|\cdot)$ (the $\bar{u}$ in the context of \Cref{main_theorem}) -- see \Cref{sec:definerelentrop}. It is even allowed to take vacuum states \cite{VASSEUR2008323,MR3519973,Leger2011}. {In fact, all of our results hold also for simply  $\bar{u}^0 \in L^\infty(\mathbb{R})$. The first step is to approximate $\bar{u}^0$ in $L^2$ (on some compact interval) by an element of $\inisp$ (notice $\inisp$ is dense in $L^2$). However,  if we do not assume any additional regularity on $\bar{u}^0$ besides $L^\infty$, and in particular $\bar{u}^0\notin \inisp$, then we do not have control on the expected convergence rate of our error estimator (see \Cref{sec:op-ed}).}
\end{remark}

\begin{remark}
    The Strong Trace Property (\Cref{strong_trace_definition}) is a standard assumption in the theory of shifts and $a$-contraction theory. It is an open question whether the Strong Trace Property truly needs to be assumed in the theory of shifts and $a$-contraction well-posedness theory. Similarly, it is an open question whether solutions automatically regularize enough to attain strong traces. The topic is the focus of much recent work. For the isentropic flow of an ideal gas with $\gamma=3$, Vasseur \cite{MR1720782} is able to show that solutions to this system are regular in time. Further, recent work by Golding \cite{golding2022unconditional} shows that for this particular system, $L^\infty$ solutions regularize, such that entropic solutions have significant regularity. However, \cite{golding2022unconditional} does not quite show that solutions attain the strong traces (\Cref{strong_trace_definition}) we use in the present paper. Other recent work studies whether systems of conservation laws, with one entropy condition, can admit solutions via convex integration \cite{doi:10.1142/S021919972250081X,MR4144350,krupa2022nonexistence}. Such solutions, if they exist, will certainly not have the strong traces.
\end{remark}

\begin{remark}
Estimate \eqref{est:hphu} follows by combining our estimates for the uncertainty in shock positions $\Delta_i$ with the difference of values of parts that might be revealed in the different solutions; plus estimates from the front tracking theory for the $L^1$ error in regions where we use wave front tracking.
\end{remark}

\begin{remark}
Note that there are two types of discontinuities in $\hat u^\delta$: those that correspond to discontinuities in $\bar u^0$ and those that come from breaking up rapidly decreasing parts of $\bar u^0$ into step functions. The former are ``large'' in the sense that their size is independent of any discretization parameter. The size of the latter depend on the parameter $\delta>0$ that we use for approximating the rapidly decreasing parts of the initial data $\bar{u}^0$ by step functions.
\end{remark}

\begin{remark}\label{ordering_remark}
Note that we might need to compute each numerical solution $\hat v^\delta_i$ on  all of the computational domain which makes our approach quite costly.
There are two ways in which this can be mitigated: (1) all solutions $\hat v^\delta_i$ are independent of each other, i.e.,  computing them in parallel is straightforward; however, information from different solutions needs to be combined to evolve shock curves.
Moreover, (2) the simulated numerical solutions result from (nearly) non-decreasing initial data so that we may use higher order schemes without the need to account for shocks.
The only constraint on numerical schemes is that we need to reconstruct all numerical solutions  in such a way that the global ordering condition $\hat v^\delta_i > \hat v^\delta_{i+1}$ is maintained (see \eqref{ordering_main_thm} and \eqref{ordering_vs}).

Ordering of numerical solutions and their reconstructions $\hat v^\delta_i$ as well as  lower bounds for pointwise differences between different $\hat v^\delta_i$ 
is guaranteed \textit{ a priori} for monotone finite volume schemes and the reconstructions from \cite{GiesselmannMakridakisPryer, DednerGiesselmann} but not for higher order schemes.
Indeed, in monotone finite volume schemes, the value at time $t^{n+1}$ in the $i$-th cell, denoted $u_i^{n+1}$ can be expressed as a function
\[ u_i^{n+1} = H(u_{i-1}^{n},u_i^{n},u_{i+1}^{n})\]
where each partial  derivative of $H$ is non-negative and the sum of partial derivatives is $1 + \mathcal{O}(|u_{i-1}^{n}-  u_i^{n}| + | u_i^{n}  - u_{i+1}^{n}|).$
The reconstruction for piece-wise constant schemes simply uses linear interpolation which maintains ordering and gaps.

For higher order schemes and reconstructions the ordering can be verified \textit{ a posteriori} since the reconstructions are computed explicitly.

\end{remark}

\begin{remark}
If we can guarantee that each of the reconstructions $\hat v_i$ is non-decreasing we can drop the term $\norm{\Big[\partial_x \bigg|_{(x,t)} \eta'({\hat{\psi}^\delta})\Big]_{-}}_{L^\infty(K)}$, in the statement of the theorem. 
Otherwise, Lipschitz constants of the $\hat v_i$ are a posteriori computable.
\end{remark}

\begin{remark}
It should be noted that our bound on $\Delta_i$ (see \eqref{def_control_shift}, below) depends on $\mathcal{R}$ and the bound on $\mathcal{R}$ in \eqref{def:cR} depends on the $\Delta_i$.  This interdependence can be overcome by first using a very pessimistic bound for the $\Delta_i$ and subsequent iteration. Details can be found in Section \ref{algo_section}. 
\end{remark}

We also have the following result on the convergence of our error estimator.
 \begin{theorem}[Convergence of the estimator]\label{conv_thm}

We consider the same assumptions as in \Cref{main_theorem} above.
 
In particular, recall that we fix $\bar{u}^0 \in \inisp$. Further, we let $\bar{u}$ be an exact solution to our conservation law \eqref{system} with initial data $\bar{u}^0$.
     
     Let $\{{\delta_n}\}_{n=1}^{\infty}\subset(0,\infty)$ be a sequence of positive numbers such that $\delta_n\to0$ as $n\to\infty$. Then, for each $n\in\mathbb{N}$, we have, from within the context of \Cref{main_theorem} above, $N^{\delta_n}\in\mathbb{N}$, ${v}^{\delta_n}_1(x,t),\ldots, {v}^{\delta_n}_{N^{\delta_n}+1}(x,t)$, as well as the corresponding $\hat{v}^{\delta_n}_1(x,t),\ldots, \hat{v}^{\delta_n}_{N^{\delta_n}+1}(x,t)$. We also have the residuals $R^{\delta_n}_1,\ldots, R^{\delta_n}_{N^{\delta_n}+1}$.
     
     Then assume there is a constant $c_0>0$ (which is uniform in $n$) such that
    \begin{align}\label{bound_lip_thm}
    \sup_t\textrm{Lip}[\hat{v}^{\delta_n}_i(\cdot,t)]\leq \frac{1}{c_0}
    \end{align}
    for all $i\in \mathcal{L}^{\delta_n}$, where $\mathcal{L}^{\delta_n}\coloneqq \{1,\ldots,N^{\delta_n}+1\}$ is the index set for all the $\hat{v}^{\delta_n}_i$, and 
     \begin{align}\label{assume_gap}
       \hat{v}^{\delta_n}_i(x,t) - \hat{v}^{\delta_n}_{i+1}(x,t)\geq c_0 \cdot \inf_{y} \big[v^{\delta_n}_i(y,0) - v^{\delta_n}_{i+1}(y,0)\big],
     \end{align}
     for all $(x,t)\in K$ and for all $i\in \mathcal{L}^{\delta_n}$. Recall $ v^{\delta_n}_i(x,0) - v^{\delta_n}_{i+1}(x,0)\geq \frac{1}{2} \epsilon{\delta_n}>0 \ \forall i \text{ and } \forall x$.

Furthermore, assume that
\begin{multline}
    \frac{1}{{\delta_n}}\Bigg[\int\limits_{0}^{T}
\int\limits_{-S+sr}^{S-sr} \max_{i\in\mathcal{L}^{\delta_n}} (R^{\delta_n}{_i}(x,r))^2 dx dr
\\
+\sum_{i\in\mathcal{L}^{\delta_n}}\Bigg[\norm{v^{\delta_n}_{i}(\cdot,0)-\hat v^{\delta_n}_{i}(\cdot,0)}^2_{L^2([\hat{h}^{\delta_n}_{i-1}(0),\hat{h}^{\delta_n}_{i}(0)]}\Bigg]\Bigg]
  \to 0  \mbox{ as } n\to\infty,
\end{multline}
 with $\hat{h}^{\delta_n}_0(0)\coloneqq -S$ and $\hat{h}^{\delta_n}_{N^{\delta_n}+1}(0)\coloneqq S$.

Moreover, assume that 

\begin{align*}
\sum_{i\in \mathcal{I}^{\delta_n}} \int\limits_0^T \abs{\dot{\hat{h}}^{\delta_n}_i(s)-\sigma(\hat{u}^{\delta_n}(\hat{h}^{\delta_n}_i(s)+,s),\hat{u}^{\delta_n}(\hat{h}^{\delta_n}_i(s)-,s))}\, ds \to 0  \mbox{ as } n\to\infty,
\end{align*}
where the quantity $\sigma(\hat{u}^{\delta_n}(\hat{h}^{\delta_n}_i(s)+,s),\hat{u}^{\delta_n}(\hat{h}^{\delta_n}_i(s)-,s))$ is the Rankine-Hugoniot speed of the shock $(\hat{u}^{\delta_n}(\hat{h}^{\delta_n}_i(s)+,s),\hat{u}^{\delta_n}(\hat{h}^{\delta_n}_i(s)-,s))$ (see \eqref{eq:RH}), and $\mathcal{I}^{\delta_n}$ is as in \Cref{main_theorem}.

Then, as $n\to\infty$ the right-hand sides of the estimates \eqref{est:hphu} and \eqref{control_l21206} go to zero. 
 \end{theorem}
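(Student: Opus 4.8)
The plan is to bound the right-hand sides of \eqref{control_l21206} and \eqref{est:hphu} termwise and show that every term vanishes as $n\to\infty$; the only genuinely delicate point is to show that the a posteriori shock-position bounds $\Delta_i$ tend to zero. First I would treat \eqref{control_l21206}. The constant $C$ there is given explicitly in terms of $\norm{\bar u}_{L^\infty}$, the flux, the entropy, their derivatives, and $\norm{[\partial_x\eta'(\hat\psi^{\delta_n})]_-}_{L^\infty(K)}$; on each smooth piece the classical derivative satisfies $\abs{\partial_x\eta'(\hat v^{\delta_n}_i)}\le\norm{\eta''}_{L^\infty}\,\textrm{Lip}[\hat v^{\delta_n}_i(\cdot,t)]$, so the uniform bound \eqref{bound_lip_thm} makes $C\le C_\ast$ for some $C_\ast$ independent of $n$. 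Next, $\mathcal{R}(t)\le\int_0^T\int_{-S+sr}^{S-sr}\max_{i\in\mathcal{L}^{\delta_n}}(R^{\delta_n}_i)^2\,dx\,dr$, which is $o(\delta_n)$ by the displayed $\tfrac1{\delta_n}$-hypothesis, hence $\to0$. For the initial term, write $\bar u^0-\hat\psi^{\delta_n}(\cdot,0)=(\bar u^0-\Pi^{\delta_n})+(\Pi^{\delta_n}-\hat\psi^{\delta_n}(\cdot,0))$, where $\Pi^{\delta_n}$ is the function glued from the exact initial pieces $v^{\delta_n}_i(\cdot,0)$ at the points $\hat h^{\delta_n}_i(0)=h^{\delta_n}_i(0)$ (these include $x_1,\dots,x_{\bar N}$). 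By construction of the $v^{\delta_n}_i$, $\Pi^{\delta_n}$ coincides with $\bar u^0$ on the nearly non-decreasing pieces and is a step approximation of the rapidly decreasing pieces with step size $\lesssim\delta_n$, so $\norm{\bar u^0-\Pi^{\delta_n}}_{L^2(-S,S)}\to0$; and $\norm{\Pi^{\delta_n}-\hat\psi^{\delta_n}(\cdot,0)}_{L^2(-S,S)}^2\le\sum_{i\in\mathcal{L}^{\delta_n}}\norm{v^{\delta_n}_i(\cdot,0)-\hat v^{\delta_n}_i(\cdot,0)}_{L^2([\hat h^{\delta_n}_{i-1}(0),\hat h^{\delta_n}_i(0)])}^2\to0$, again by the $\tfrac1{\delta_n}$-hypothesis. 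Hence the right-hand side of \eqref{control_l21206} is at most $C_\ast(o(1)+o(1))e^{C_\ast T}\to0$.

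Next, for \eqref{est:hphu} I would split the right-hand side into the uncertainty integral over $\mathcal{B}(t)$ and the sum over $\mathcal{D}$. For the first, $\abs{\mathcal{B}(t)}\le2\sum_{j\in\mathcal{I}^{\delta_n}}\Delta_j(t)$ and $\abs{\hat v^{\delta_n}_{m(x,t)}-\hat v^{\delta_n}_{M(x,t)}}$ is bounded by a constant depending only on $\norm{\bar u}_{L^\infty}$ (all the $\hat v^{\delta_n}_i$ take values in a fixed compact set), so — since $\abs{\mathcal{I}^{\delta_n}}=\bar N$ is independent of $n$ — it suffices that $\Delta_j(t)\to0$ for each $j\in\mathcal{I}^{\delta_n}$. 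The sum over $\mathcal{D}$ is finite and $n$-independent, so it is enough to show $\Gamma_d(t)+2S\Upsilon_d(t)\to0$ for each $d$: here $\Gamma_d$ is the wave-front-tracking $L^1$ error of \eqref{eq:Bressan} on the $d$-th fan, which tends to zero as $\delta_n\to0$ (with the numerical resolution refined accordingly, as built into the residual hypothesis) by the standard front-tracking consistency estimate, while $\Upsilon_d$ from \eqref{eq:psipsift} is controlled by the position errors $\Delta_i$ of the small shocks (of strength $\sim\epsilon\delta_n$) emanating from the $d$-th rapidly decreasing piece, reducing once more to showing $\Delta_i\to0$.

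The crux is therefore to show $\Delta_i(t)\to0$, uniformly in $t\in[0,T]$, for the $\bar N$ large shocks in $\mathcal{I}^{\delta_n}$ and for the small shocks entering $\Upsilon_d$. I would unwind the explicit formulas for $\Delta_i$ from \Cref{algo_section} (see \eqref{def_control_shift}, \eqref{eq:deltabs}, and \Cref{sec:triangles}): $\Delta_i(t)$ is assembled from (i) the residual quantity $\mathcal{R}$, already shown to be $o(\delta_n)$; (ii) the Rankine-Hugoniot consistency error $\int_0^T\abs{\dot{\hat h}^{\delta_n}_i(s)-\sigma(\hat u^{\delta_n}(\hat h^{\delta_n}_i(s)+,s),\hat u^{\delta_n}(\hat h^{\delta_n}_i(s)-,s))}\,ds$, which $\to0$ for $i\in\mathcal{I}^{\delta_n}$ by hypothesis; and (iii) for the small shocks, a division by the shock strength, which by the gap hypothesis \eqref{assume_gap} together with $v^{\delta_n}_i(\cdot,0)-v^{\delta_n}_{i+1}(\cdot,0)\ge\tfrac12\epsilon\delta_n$ is $\gtrsim c_0\epsilon\delta_n$; this is precisely why the two offending quantities in the statement are divided by $\delta_n$. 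The mutual dependence of $\Delta_i$ and $\mathcal{R}$ noted after \eqref{def:cR} is resolved by the iteration described there (start from a crude bound and iterate; since the inputs are $o(\delta_n)$, so is the fixed point), yielding $\sup_{t\in[0,T]}\Delta_i(t)\to0$ for all relevant $i$, which closes the argument. The main obstacle is exactly point (iii): forcing the small-shock shift bound to zero in spite of its $1/(\epsilon\delta_n)$-type prefactor — which is the reason the hypothesis is stated with the $\tfrac1{\delta_n}$ normalization — together with verifying the front-tracking consistency bound for $\Gamma_d$; everything else is bookkeeping over finitely many shocks and a triangle-inequality decomposition.
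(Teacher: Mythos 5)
Your overall strategy matches the paper's (extremely brief) argument: unwind the explicit formulas for the estimator's constituents ($\Delta_i$, $\Gamma_d$, $\Upsilon_d$), reduce to finitely many shocks, and show that the residual/initial-error hypotheses make everything vanish. Your observations that the Lipschitz bound \eqref{bound_lip_thm} makes the constant in \eqref{control_l21206} uniform in $n$, that $\mathcal R\leq\int_0^T\int\max_i(R_i^{\delta_n})^2=o(\delta_n)$, and that the initial term splits through the step approximation $\Pi^{\delta_n}$, are all correct and indeed what the paper implicitly relies on.

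However, there is a genuine confusion in the treatment of the wave-front-tracking region that creates a circularity the paper's construction is designed to avoid. You assert that ``$\Upsilon_d$ from \eqref{eq:psipsift} is controlled by the position errors $\Delta_i$ of the small shocks\ldots reducing once more to showing $\Delta_i\to0$.'' This is backwards. In the paper, $\Upsilon_d = \hat M\max_\alpha\Upsilon_\alpha(t)$ is given by the \emph{direct} formula \eqref{eq:velerr}, whose ingredients are the lower bound $\mathfrak{s}_\star$ on shock strength (which by \eqref{assume_gap} is $\gtrsim c_0\epsilon\delta_n$), the $L^2$ initial-data error, the residual of $\hat\psi_{\text{fine}}$, and $\bar{\mathfrak{s}}\sim\delta_n$ — none of which involve $\Delta_i$. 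The dependency runs the other way: the position bound for front-tracking shocks, $\Delta_{inner}$ in \eqref{eq:puis}, is proportional to $\Gamma^{1/2}$, and $\Gamma$ in \eqref{eq:Bressan} is expressed in terms of $\max_\alpha\Upsilon_\alpha$. Hence the correct ordering is $\Upsilon_\alpha\to0$ (straight from \eqref{eq:velerr} and the $\tfrac{1}{\delta_n}$-normalized hypotheses), then $\Gamma\to0$ (from \eqref{eq:Bressan}), then $\Delta_{inner}\to0$ and the `boundary'-shock bounds \eqref{eq:deltabs}. The circular bootstrap that \emph{is} present and needs the iteration of Remark~2.10 is the $\Delta_i\leftrightarrow\mathcal R$ interdependence (through $\tilde J$ in \eqref{def:cR}), which you correctly flag, but that is distinct from the spurious $\Upsilon_d\leftarrow\Delta_i$ link you introduce. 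Relatedly, your item~(iii) attributes the ``division by the shock strength'' to the formula for $\Delta_i$, but for front-tracking shocks that $\tfrac{1}{\sqrt{\mathfrak{s}_i}}$-type prefactor actually lives in \eqref{eq:velerr} (and in $\tilde C_\ast$ for boundary shocks, \eqref{need_supremum_of_this_too3}), not in $\Delta_{inner}$. Once the direction of dependence is corrected, the rest of your bookkeeping closes the argument as intended.
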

For the proof of \Cref{conv_thm}, see \Cref{sec:proof_conv_thm}.

\begin{remark}
Compare \eqref{bound_lip_thm} and \eqref{assume_gap} with \Cref{control_lipschitz_lem} and \Cref{gap_lemma} below, respectively,  which concern exact solutions. In particular, remark that, by the construction we will give, for the exact solutions we have
\begin{align}
    \sup_n \sup_{i\in\mathcal{L}^{\delta_n}} \sup_t\textrm{Lip}[{v}^{\delta_n}_i(\cdot,t)]<+\infty.
    \end{align}
\end{remark}

\subsection{A high-level overview of the proof of \Cref{main_theorem} (Main Theorem)}
\hspace{.001in}

At a high level, our method works by considering an exact solution $\bar{u}$ to \eqref{system}
 with initial data $\bar{u}^0\in\inisp$. From the initial data $\bar{u}^0\in\inisp$, we find an element $\tilde{u}^0\in\inise$ which is $L^2$-close to $\bar{u}^0$. A parameter $\delta>0$ controls the minimum size of discontinuities in the discretization of the function $\bar{u}^0$ to make $\tilde{u}^0$. For each interval where $\tilde{u}^0$ is continuous, we create a Lipschitz-continuous artificial extension defined globally on all of $\mathbb{R}$ (see \Cref{ext_section}). Then, for each artificial extension we create a numerical solution $\hat v_i$ which has this artificial extension as initial data (although for the small shocks of size proportional to $\delta$, we do not in practice have to numerically simulate the extensions -- see \Cref{do_not_sim_remark}). Then, by gluing together the artificial solutions $\hat v_i$ using the Rankine-Hugoniot condition (with a numerical ODE solver), we construct the numerical solution $\hat u$. On the other hand, we also define a function $\hat\psi$ which is also formed by gluing together the numerical solutions $\hat v_i$. However, the points where the $\hat v_i$ are glued together are not determined by Rankine-Hugoniot, but instead by (unknown) generalized characteristics from the exact solution $\bar{u}$. For the construction of $\hat u$ and $\hat \psi$, see \Cref{construction_section}. 

\begin{figure}[tb]
      \includegraphics[width=.8\textwidth]{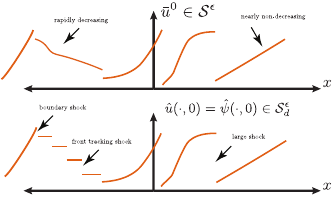}
  \caption{The function $\bar{u}^0$ is divided into \emph{rapidly decreasing} parts and \emph{nearly non-decreasing} parts. In $\hat{u}(\cdot,0)=\hat\psi(\cdot,0)$, we differentiate between three different types of shocks: \emph{large shocks}, \emph{front tracking shocks}, and \emph{boundary shocks}.}\label{shocks_fig}
\end{figure}

At each time $t$, we will discern three different types of shocks in $\hat \psi(\cdot,t)$ and $\hat u(\cdot,t)$ (see \Cref{shocks_fig}): 
 
 \begin{itemize}
     \item \emph{front tracking shocks} -- shocks which are of size proportional to $\delta$ at time $t=0$ and at $t=0$ are in the interior of one of the intervals where $\bar{u}^0$ is rapidly decreasing. To be a ``front tracking'' shock at time $t>0$, the shock must have collided with only other ``front tracking'' shocks up until (and including) time $t$. These shocks in $\hat{u}$ will move according to a simple scalar front tracking scheme;
      \item \emph{large shocks} -- shocks which at time $t$ are not ``front tracking'' shocks and are not adjacent to a ``front tracking'' shock. At time $t=0$, they have a size independent of $\delta$;
     \item \emph{boundary shocks} -- the shocks which are neither ``front tracking'' nor ``large''. They are adjacent to ``front tracking'' shocks. Remark that at later times $t>0$, a ``boundary'' shock might be adjacent to, both on the left and right, a ``front tracking'' shock -- such as when two ``boundary'' shocks, separated by an interval corresponding to a nearly non-decreasing piece of $\bar{u}^0$ (at $t=0$), collide (see \Cref{twobdycollide2_fig} and \Cref{sec:ftbothsides}).
 \end{itemize}
 
 Due to using generalized characteristics from the exact solution $\bar{u}$, $\hat \psi$ and $\bar u$ will be $L^2$-stable relative to each other (see \Cref{Revelation_section}). Then, control on $\bar u-\hat u$ will come from control on $\hat u-\hat \psi$. To control $\hat u-\hat \psi$, we need to determine how much the speed of the shocks in $\hat \psi$ differs from the Rankine-Hugoniot speed of these shocks (recall the shocks in $\hat u$ move with numerically approximated Rankine-Hugoniot speed). The error intervals $\Delta_i$ which measure the difference between shock positions in $\hat u$ and $\hat \psi$ will depend on the type of shock: large shock, front tracking shock, or boundary shock? See \Cref{algo_section}.

 A key part of estimating the error in shock positions between $\hat \psi$ and $\hat u$ will be determining when collisions which have occurred in $\hat u$ have also occurred in $\hat \psi$ (\Cref{sec:when_collided}). The general principle here will be: when two shocks (neither of which is ``front tracking'') collide in $\hat u$, continue the two shocks forward in time as if the collision did not occur (so no change in left- or right-hand states of either shock). While continuing both shocks forward in time, calculate error intervals between the positions of these shocks and the positions of the corresponding shocks in $\hat \psi$ (again, under the assumption the corresponding shocks in $\hat \psi$ have not collided). At such a time when these two error intervals (one for each shock) no longer intersect, we can conclude by contradiction that the shocks must have collided in $\hat\psi$.

 For the ``front tracking'' shocks, we will control them using Bressan's $L^1$-theory \cite{Bressan2000} (see \Cref{front_tracking_shocks}).

\section{Introduction to the theory of shifts, $a$-contraction, and their use in quantitative estimates}\label{describe_theory}

Let us motivate one of the major techniques utilized in this paper: the \emph{theory of shifts} (and the closely related technique of \emph{$a$-contraction}).
The theory of shifts was devised by Vasseur to show $L^2$ stability for shock solutions to conservation laws \cite{VASSEUR2008323}. The technique has reached a certain level of maturity and, recently, it has been used to show uniqueness of solutions to scalar conservation laws with convex flux in one space dimension follows from having one convex entropy, see \cite{2017arXiv170905610K}. This result is not obvious, and in fact has been proven two other times before \cite{delellis_uniquneness,panov_uniquness}, albeit using entirely different techniques. In \cite{chen2020uniqueness}, the method of \emph{$a$-contraction} was used to show  that the small-$BV$ solutions considered by Bressan-Crasta-Piccoli \cite{MR1686652} are in fact stable (and unique) among the large class of entropic, weak solutions with possibly very large data. Again, a single convex entropy is used. 

Remark that for the scalar conservation laws, the solutions to the Riemann problem are non-unique if the flux function has even a single inflection point. This includes for example concave-convex flux functions e.g, $f(u)=u^3+au$ for any $a\in\mathbb{R}$. The non-uniqueness persists even if solutions are required to verify the entropy inequality for at least one strictly convex entropy (see e.g. \cite{lefloch_book}).

\subsection{Shifts and $a$-contraction}\label{sec:definerelentrop}
For the system \eqref{system}, we fix an entropy and entropy-flux $\eta\colon\mathbb{R}\to\mathbb{R}$ and $q\colon\mathbb{R}\to\mathbb{R}$, respectively. They verify the compatibility condition $q'=\eta'A'$, where $A$ is the flux for our system \eqref{system}.

We assume that $\eta$ is strictly convex. Furthermore, for $a,b\in\mathbb{R}$, we define the relative entropy,
\begin{align}\label{rel_entropy}
\eta(a|b)\coloneqq \eta(a)-\eta(b)-\eta'(b)(a-b).
\end{align}

We define the relative entropy-flux associated with the relative entropy \eqref{rel_entropy}:
\begin{align}\label{rel_entropy_flux}
q(a;b)\coloneqq q(a)-q(b)-\eta'(b)(A(a)-A(b)).
\end{align}

We also define the relative flux:
For $a,b\in\mathbb{R}$,
\begin{align}
A(a|b)\coloneqq A(a)-A(b)-A'(b)(a-b). \label{Z_def}
\end{align}

Note that due to the strict convexity of the entropy $\eta$, for all $a$ and $b$ in a  fixed compact set, there exist constants $c^*,c^{**}>0$ such that 
\begin{align}\label{control_rel_entropy}
c^*(a-b)^2\leq \eta(a|b)\leq c^{**}(a-b)^2.
\end{align}

Thus, the method of relative entropy is an $L^2$-type theory.

Now, consider a weak solution $u$ to the scalar conservation law \eqref{system}, entropic for the entropy $\eta$. Moreover, consider an entropic pure shock solution $\bar{u}$,
\begin{align}
\bar{u}(x,t)\coloneqq
\begin{cases}
u_L,\mbox{ if } x<\sigma(u_L,u_R) t\\
u_R,\mbox{ if } x>\sigma(u_L,u_R) t,
\end{cases}
\end{align}
where $u_L,u_R\in\mathbb{R}$ and $\sigma(u_L,u_R)$ is the Rankine-Hugoniot speed naturally associated to the shock $(u_L,u_R)$, i.e.
\begin{equation}
    \label{eq:RH}
 \sigma(v,w)\coloneqq
  \begin{cases}
     \frac{A(v)-A(w)}{v-w} & \text{if } v\neq w,\\
   A'(w) & \text{if } v= w.
  \end{cases}
\end{equation}

The focus of interest of  \cite{VASSEUR2008323} and subsequent research was in stability of the shock $\bar u$ in a large class of weak solutions $u$ with minimal regularity assumptions, asking only for a trace condition which is weaker than $BV_{\text{loc}}$ (see \Cref{strong_trace_definition}). More precisely, given a particular Lipschitz-continuous function of time $h(t)$, how does the quantity 
\begin{align}\label{how_grow}
  \int\limits_{-\infty}^{h(t)} \eta(u(x,t)|u_L)\,dx +    \int\limits_{h(t)}^{\infty} \eta(u(x,t)|u_R)\,dx
\end{align}
grow in time? 

Notice that as in weak-strong stability theory, in \eqref{how_grow} the solution $u$ with low regularity is put in the first slot of the relative entropy and the solutions with more regularity, and on which there is more control, in the second slot (in this case, constant states $u_L$ and $u_R$). Remark that throughout this paper, we will put exact solutions in the first slot of the relative entropy, and numerical solutions in the second slot of the relative entropy. Since we perform  a posteriori analysis, we have large amounts of control over the numerical solutions.

The theory of shifts was devised to overcome the following difficulty: when $h(t)=\sigma(u_L,u_R) t$, the quantity \eqref{how_grow} grows in time. More problematically, estimates in the form of Dafermos and DiPerna's weak-strong stability theory do not hold. Simple examples show that this is true even in the case of scalar conservation laws.

However, if a particular Lipschitz function $h(t)$ is chosen, control over the quantity \eqref{how_grow} can be gained. This is equivalent to studying the stability between $u$ and the function
\begin{align}
\bar{u}_{\text{shifted}}(x,t)\coloneqq
\begin{cases}
u_L,\mbox{ if } x<h(t)\\
u_R,\mbox{ if } x>h(t).
\end{cases}
\end{align}
In other words, the shock in the solution $\bar{u}$ will no longer be moving with its natural Rankine-Hugoniot speed, but instead will be moving with an artificial velocity dictated by the special choice of $h$. Due to $h(t)\neq \sigma(u_L,u_R) t$, the shock in $\bar{u}_{\text{shifted}}$ will not be traveling with Rankine-Hugoniot speed and thus $\bar{u}_{\text{shifted}}$ is no longer an exact solution to the conservation law. We note that when $u_L$ and $u_R$ are truly constants, as in this example,  \eqref{how_grow} is a \emph{decreasing function of time}. The artificial ``shift'' in position of the shock forces a contraction in $L^2$ as measured by the relative entropy functional $(a,b)\mapsto\eta(a|b)$. This is the theory of shifts. In the context of the relative entropy method, this idea was devised by Vasseur \cite{VASSEUR2008323}.

The theory of shifts gives some control on the difference $\sigma-\dot{h}$ (a measure of how artificial the shift is). However, up until this point, the control on the shifts has never played a significant role in the theory. In both \cite{chen2020uniqueness,2017arXiv170905610K}, the stability and uniqueness results do not rely on controlling the shift functions. Instead, the contractive property of \eqref{how_grow} (as a function of time) is key.

A similar procedure works for systems, but the situation is more involved:
Indeed, in the case of systems of conservation laws with multiple conserved quantities, there are examples where no function $h$ exists such that \eqref{how_grow} is decreasing in time \cite{serre_vasseur}. However, by introducing a spatially inhomogeneous pseudo-distance we can regain contractivity \cite{MR3519973}. In particular, we introduce a parameter $a>0$ and consider the quantity
\begin{align}\label{how_grow_a}
  \int\limits_{-\infty}^{h(t)} \eta(u(x,t)|u_L)\,dx +   a \int\limits_{h(t)}^{\infty} \eta(u(x,t)|u_R)\,dx,
\end{align}
where $a$ is chosen sufficiently large or sufficiently small (depending on the wave family of the shock we are considering). Then, for particular values of $a$, there again exists a Lipschitz function of time $h$ such that \eqref{how_grow_a} is decreasing in time. This is the technique of \emph{$a$-contraction}.

\subsection{Uniqueness via shifts}\label{sec:uniqueshifts}
While  \cite{chen2020uniqueness,2017arXiv170905610K}  provide uniqueness results neither gives quantitative stability results. We will outline in this section why this is the case. 

Both \cite{chen2020uniqueness,2017arXiv170905610K} use the following framework in their proofs: given a weak solution $u$, entropic for a convex entropy, approximate the initial data $u(\cdot,0)$ by piecewise-smooth initial data $\hat{u}^0$. Then the initial data $\hat{u}^0$ are evolved forward in time according to the theory of shifts. This gives $\psi$, a function of time and space. One key achievement in \cite{chen2020uniqueness,2017arXiv170905610K} is to ensure  $L^2$ stability between $\psi$ and $u$, i.e., control on $\norm{\psi(\cdot,t)-u(\cdot,t)}_{L^2}$, by letting the discontinuities in $\psi$ not travel with Rankine-Hugoniot speed but instead move according to artificial shift functions as described above. In the systems case with multiple conserved quantities, $\hat{u}^0$ is piecewise-constant and $\psi$ is constructed using the front tracking algorithm, but with wave speeds dictated by shifts. Thus, $\psi$ is in general not a solution to the conservation law under consideration. 
The analysis in \cite{chen2020uniqueness,2017arXiv170905610K}  proceeds by combining the facts that $\psi$ will stay $L^2$ close to the weak solution $u$, and, by construction, $\psi$ will have some regularity property which is uniform in the choice of initial data $\psi(\cdot,0)$. Due to $\psi$ staying $L^2$-close to the weak solution $u$, and $\norm{\psi(\cdot,0)-u(\cdot,0)}_{L^2}$ being arbitrarily small, the regularity property of $\psi$ is transferred to the weak solution $u$. In the scalar case, the function $\psi$, even though it is not a solution to the conservation law, will always obey the decay estimate given by Ole\u{\i}nik's \emph{condition E} \cite{MR0094541}. We briefly recall that a solution $u$ to \eqref{system} verifies condition E if 
\begin{align}\label{condE}
    \begin{cases}
        \mbox{There exists a constant $C>0$ such that}\\
       \hspace{1.3in}  u(x+z,t)-u(x,t)\leq \frac{C}{t}z\\
         \mbox{for all $t>0$, almost every $z>0$, and almost every $x\in\mathbb{R}$.}
    \end{cases}
\end{align}

Condition E is a uniqueness criterion, so transferring this property to $u$ implies uniqueness. In the systems case,  the trace of $\psi$ along space-like curves will have bounded variation. This is the \emph{Bounded Variation Condition}, which is known to be a condition for uniqueness \cite{MR1757395}. The function $\psi$ verifies this condition in a uniform way, and this completes the proof.

\subsection{More details on shifts in the scalar case}\label{sec:dssc}
Because they will play an important role in the present paper, let us present a few more of the ideas behind how the framework for uniqueness is applied in the scalar case \cite{2017arXiv170905610K}. 

Firstly, the initial data $ \bar u(\cdot,0)$ are approximated by piecewise-smooth initial data $\hat{u}^0(\cdot)=\psi(\cdot,0)$. In particular, $\psi(\cdot,0)$ is piecewise-Lipschitz such that the Lipschitz pieces are monotonically increasing (cf. the space $\inise$). Initial data in this form is dense in $L^2$ (see \cite{2017arXiv170905610K}). Then, for each interval $(x_i,x_{i+1})$ where $\psi(\cdot,0)$ is smooth,  $\psi(\cdot,0)|_{(x_i,x_{i+1})}$ is extended to  a global-in-space non-decreasing function $ v_i^0$ such that the extensions satisfy the pointwise ordering condition $ v_i^0 \geq v_{i+1}^0$ (cf. \eqref{ordering_vs}). Then, for each $i$ a global-in-time smooth solution $v_i$ to the conservation law with initial data $v_i^0$ is constructed.

Secondly, the quantity
\begin{align}\label{how_growpsi}
  \sum_{i=0}^N \int\limits_{h_i(t)}^{h_{i+1}(t)} \eta(u(x,t)|v_{i+1}(x,t))\,dx,
\end{align}
is considered,
where $h_0\coloneqq -\infty$ and $h_{N+1}\coloneqq +\infty$, and at the initial time $h_i(0)=x_i$ for $i=1,\ldots,N$. For $i=1,\ldots,N$, the $h_i$ are shift functions constructed according to the theory of shifts. 

Compare \eqref{how_grow} and \eqref{how_growpsi}: A weak solution $u$ was put in the first slot of the relative entropy, and smooth solutions go in the second slot of the relative entropy. When the $h_i$ are appropriately chosen shift functions, \eqref{how_growpsi} will again be a decreasing function of time. This gives a function $\psi\colon\mathbb{R}\times[0,\infty)\to\mathbb{R}$,
\begin{align}\label{psi_firstdef}
\psi(x,t)\coloneqq
\begin{cases}
v_1(x,t) & \text{ if } x<h_1(t)\\
v_2(x,t) & \text{ if } h_1(t)<x<h_2(t)\\
\hspace{.17in}\vdots \\
v_{N+1}(x,t) & \text{ if } h_N(t)<x.
\end{cases}
\end{align}
When two shift functions $h_i$ and $h_j$ collide with each other, the $v_i$ that was between them is simply deleted, and the two shifts  are glued together (i.e., $h_i\equiv h_j$ for all future times), and then we continue running the clock forward in time. Due to there being only finitely many curves of discontinuity $h_i$, this process will only have to be repeated at most finitely many times. This defines the function $\psi$ globally in time.

Then, this gives a function $\psi$ which is $L^2$-close to $u$ for all time, due to \eqref{control_rel_entropy} and \eqref{how_growpsi} being decreasing in time.  Further, $\norm{\psi(\cdot,0)-u(\cdot,0)}_{L^2}$ can be chosen arbitrarily small. To complete the framework, it needs to be shown that $\psi$ verifies condition E (see \eqref{condE}). This follows because in between the $h_i$, $\psi$ is a classical solution to our conservation law, and thus is well-known to verify condition E in a uniform way (independent of the $v_i$ we chose). Furthermore, due to the choices of the extensions $v_i$, the function $\psi$ will verify condition E also along the $h_i$. This implies $u$ verifies condition E, and thus $u$ is the unique solution to verify condition E. This completes the argument.

\subsection{Difficulties in deriving quantitative stability results}

In our program to use the theory of shifts to come up with a posteriori error estimates for numerical approximations we replace $\psi$ by a quantity $\hat \psi$ where the exact solutions $v_i$ are replaced by numerical solutions $\hat v_i$ (or more precisely, by Lipschitz-continuous reconstructions of numerical solutions in the spirit of \cite{GiesselmannMakridakisPryer, DednerGiesselmann}).
The problem with this approach is that in order to keep $\hat \psi$ $L^2$-close to $u$ the discontinuities in $\hat \psi$ need to move with shifts. The speeds of the shifts are dictated by $u$, i.e. they are unknown and not accessible numerically. 
The natural choice of speeds that are accessible are Rankine-Hugoniot speeds computed using the $\hat v_i$ but if the $\hat v_i$ are glued together using curves with these speeds the result is not $\hat \psi$ but some new computable (!) quantity $\hat u$.

The natural way to obtain quantitative control on $\norm{\hat{u}(\cdot,t)-u(\cdot,t)}_{L^2}$ 
is to combine the control on  $\norm{\hat \psi(\cdot,t)-u(\cdot,t)}_{L^2}$,
that follows from including residuals in the theory of shifts, with estimates on $\norm{\hat \psi(\cdot,t)-\hat{u}(\cdot,t)}_{L^2}$ which, in principle, can be obtained by controlling the size of the artificial shift. 
However, there are two difficulties with this which are essentially the same in the scalar and the system case:

For one, consider the following. Let $t^*>0$ be such that for all $t\in[0,t^*]$, there are no interactions between waves in either $\hat{u}$ or $\hat \psi$. Then, for $t\in[0,t^*]$, we can relate $\hat{u}$ and $\hat \psi$ by a simple change of variables. However, if for example after time $t^*$ two waves collide in $\hat\psi$ but not in $\hat{u}$, it is no longer possible to relate $\hat{u}$ and $\hat \psi$ by a simple change of variables. This is what we refer to as the \emph{change of variables problem.}
This problem is even more involved in the systems case \cite{chen2020uniqueness}. Indeed, in the systems case, when the waves collide in $\hat\psi$, the front tracking algorithm stops the clock, solves the new Riemann problem which arises, and then restarts the clock, thus fundamentally altering the relationship between $\hat\psi$ and $\hat{u}$.

The second difficulty is that in both the scalar and systems case, the control on the shift gets worse and worse as the size of the shock (being controlled by shift) decreases. This is highly problematic, as one is interested in  letting $\hat\psi(\cdot,0)\to u(\cdot,0)$ in $L^2$ and thus the shocks in $\hat\psi$ will become smaller when simulations on finer meshes are considered.

This is what we call the \emph{problem of loss of control on small shocks.}

\subsection{A strategy for obtaining quantitative stability estimates}
In this paper we derive \emph{quantitative} error estimates of numerical approximations to scalar conservation laws. This requires quantitative stability estimates. We will consider weak solutions which are entropic for at least one strictly convex entropy. 

In order to obtain quantitative stability estimates, we need to restrict the class of initial data, i.e. we require initial data in $\inisp$ which is a restriction compared to \cite{2017arXiv170905610K}.
However, from a numerical point of view, this does not seem to be a severe restriction since it is not clear how to construct good  approximations of less regular initial data in practice.
Due to restricting ourselves to initial data in $\inisp$, from here on we will call the exact solution we are considering $\bar{u}$ instead of $u$.

The notation used above will be suggestive, but some slight variations are needed. In particular, we want all discontinuities to have some minimal size and this is tricky if we approximate $\bar u^0$ by step functions in areas were it is decreasing with a slope very close to zero. 
Thus, we will pick a threshold $\epsilon >0$ and treat points $x$ with $\partial_x \bar u^0(x)> - \epsilon$ as points where $\bar u^0$ is non-decreasing. This gives rise to the space $\inise$ 

We shall pick $\epsilon>0$ such that smooth solutions to \eqref{system} with slope greater than $- \epsilon$ do not experience shock formation before or at time $T$. In fact, we choose  $\epsilon$ such that no slopes less than $-1$ form.

Our first step is to approximate the initial data $\bar{u}(\cdot,0)$ by some $\hat{u}^0 \in \inise$. 
Since we wish to use the quantitative estimates in  an a posteriori setting, $\hat u^0$ should also live in some finite dimensional space that can be represented numerically, e.g. some space of piecewise polynomials of a certain degree.
On the intervals where $\partial_x \bar{u}(\cdot,0) >- \epsilon$ this can be achieved by first using a standard discretization of $\bar {u}^0$, i.e. projection or interpolation into a piecewise polynomial space and subsequent reconstruction as in \cite{GiesselmannMakridakisPryer}. 
On intervals $(x_i,x_{i+1})$ where $\bar{u}(\cdot,0)$ is a monotonically decreasing Lipschitz function with slope magnitude larger than $\epsilon$, we use step functions of width $\delta$ such that (at $t=0$) we obtain discontinuities with sizes $\mathfrak{s}_i$ between $\delta \cdot \epsilon$ and  $\delta\cdot({\rm Lip} [\bar u^0|_{(x_i,x_{i+1})}]) $ . Note that $\delta>0$ is a parameter that we can choose and that does not have to coincide with the mesh width $h$ in the intervals where we use piecewise polynomials. For these ``front tracking'' shocks (see \Cref{shocks_fig}), the solution is evolving in time according to a simple scalar front tracking process. In this paper, our front tracking solutions will only involve shocks and are thus simple to construct. Compare this to the scalar front tracking algorithm presented in \cite[Section 14.1]{dafermos_big_book} which is capable of handling more general initial data, but which involves some complexity due to eliminating rarefaction waves and instead using composite waves consisting of constant states, admissible shocks, and small inadmissible ``rarefaction'' shocks. Our work compares directly to the result \cite{chen2020uniqueness}, where stability of small-$BV$ solutions, among the class of large data $L^2$ solutions, is shown by utilizing $a$-contraction and the front tracking algorithm for systems of conservation laws. See \Cref{constructuhat} for more details on the construction of $\hat{u}$.

Although shocks are very well-behaved within the context of the theory of shifts, neither the relative entropy method nor the theory of shifts can cope with the \emph{formation} of a shock as the slopes of a solution become increasingly steep. For a scalar conservation law with convex flux, a solution which is strictly decreasing in space will eventually suffer from the catastrophe of the overturning waves.  This is why it is necessary to break the rapidly decreasing parts of $\bar{u}(\cdot,0)$ into step functions to allow for analysis within the theory of shifts.

Based on $\hat{u}^0$, we construct global numerical solutions $\hat v_i$, which are based on the artificial extensions similar to \cite{2017arXiv170905610K} as discussed above (but specialized to the present situation, see \Cref{ext_section}).

As suggested by the notation above and \eqref{psi_firstdef}, in this paper ${\hat{\psi}}$ will be constructed using shift functions such as to keep $\hat{\psi}$ $L^2$-close to $\bar{u}$. However, instead of gluing together exact solutions $v_i$ as in \eqref{psi_firstdef}, we will glue together the $\hat{v}_i$ constructed numerically. Nonetheless, the shifts will follow precisely the same theoretic construction. In the scalar case, the shift functions are simply generalized characteristics of the exact solution $\bar{u}$ (for more on generalized characteristics, see \cite[Chapter 10]{dafermos_big_book}). Thus, the $\hat{\psi}$ becomes an interesting mix between the exact solution $\bar{u}$ and the numerical solution $\hat{u}$. See \Cref{constructpsihat} for details.

\begin{figure}[tb]
      \includegraphics[width=.4\textwidth]{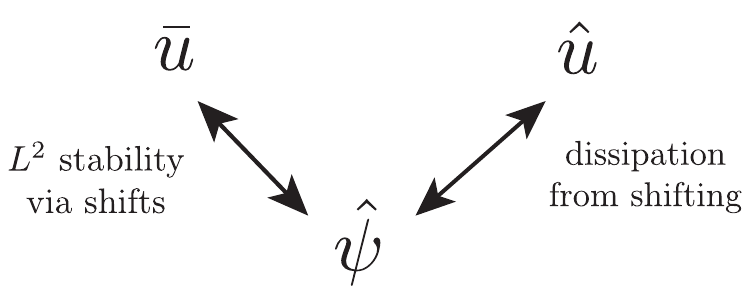}
  \caption{The philosophy of the present paper is to control $\hat\psi-\bar{u}$ via the $L^2$ stability we gain via shifting, while on the other hand measuring $\hat\psi-\hat u$ via the control on the shifts we get due to dissipation.}\label{figphilosophy}
\end{figure}

We are aiming for a quantitative estimate on $\bar{u}-\hat{u}$. To do this, we will in fact give a quantitative estimate on ${\hat{u}(\cdot,t) - {\hat{\psi}}(\cdot,t)}$. Such a quantitative estimate, combined with control on $\norm{{\bar{u}}(\cdot,t)-\hat{\psi}(\cdot,t)}_{L^2}$ due to shifts, will give us a quantitative estimate on $\bar{u}(\cdot,t)-\hat{u}(\cdot,t)$ (see \Cref{figphilosophy}). 

As discussed above, there are two difficulties in deriving a quantitative estimate on ${\hat{u}(\cdot,t) - {\hat{\psi}}(\cdot,t)}$. The first is what to do when collisions between shocks occur in one of $\hat{u}$, $\hat\psi$ but not the other, such that the two functions can no longer be related to each other through a change of variables. This issue is handled numerically by keeping track of all the possibilities for the function $\hat\psi$, based on what we see in the computer during the construction of the numerical solution $\hat u$. 
This process involves remembering shock curves and $\hat v_i$ even after they are no longer needed to construct $\hat{u}$ due to collisions between shocks in $\hat{u}$. In this way, we are building an ``unfolded'' version of $\hat{u}$. As a result of keeping careful track of all the possibilities for the function $\hat\psi$, we gain an understanding about the difference in position, measured by $\Delta_i$ (see \eqref{def_control_shift}), for where each $\hat v_i$ might be revealed in $\hat u$ and $\hat \psi$. This allows us to derive estimates on $\norm{\hat{u}(\cdot,t) - {\hat{\psi}}(\cdot,t)}_{L^1}$.


The second difficulty discussed above in deriving a quantitative estimate on ${\hat{u}(\cdot,t) - {\hat{\psi}}(\cdot,t)}$ is how to control shifts of small shocks. For us, this issue appears when trying to control the position of the very small ``front tracking'' shocks of size $\mathcal{O}(\delta)$ (at least, they are this size at $t=0$). Its resolution involves an ancillary function $\hat{\psi}_{\text{fine}}$ which allows us to gain improved control on small shocks by something of a ``localization'' procedure which minimizes the error in position of a shock due to far away errors in $\hat{u}(\cdot,0)-\bar{u}(\cdot,0)$. With this, we manage to devise improved control on a front tracking algorithm with shifts: a key idea is to use the $L^1$-theory for control on ``$\epsilon$-approximate'' front tracking solutions which have shocks traveling with Rankine-Hugoniot speed \emph{plus an error term} (see \cite{Bressan2000} and also \Cref{L1_diss_lemma}, below). We believe the ideas we introduce here might be applicable to the theory of shifts and $a$-contraction in the well-posedness theory for hyperbolic \emph{systems} of conservation laws.

Let us remark that a fundamental source of error in the numerical solution $\hat{u}$ is due to numerical errors in the $\hat{v}_i$. To glue together the $\hat{v}_i$ to make $\hat{u}$, the Rankine-Hugoniot jump condition is used. More precisely, ODEs are solved numerically (see \eqref{ODEfor_u_hat}). The position of the shock curves thus depends on the pointwise values of the $\hat v_i$ at the exact points the numerical ODE solver sees. These pointwise values are a quantity on whose error we do not have good direct control. 
Roughly speaking, the function $\hat\psi$ corrects this error by always gluing together the $\hat{v}_i$ in the places where they \emph{should have been glued} if it weren't for this numerical error. This is done using the generalized characteristics in the exact solution $\bar{u}$ as the curves of discontinuity -- the ``shift''. This shift in position of the curves of discontinuity between the $\hat{u}$ and $\hat\psi$ causes some dissipation (see \Cref{neg_entropy_diss}), and this allows us to control the magnitude of the shift  (see the estimate \eqref{def_control_shift}). Note that the estimate on the shift \eqref{def_control_shift} relies on the $L^2$ norm of the numerical residual, as well as $\norm{\hat{u}(\cdot,0)-\bar{u}(\cdot,0)}_{L^2}$. It is a striking consequence of the theory of shifts that by controlling non-local integral quantities, we can control the  error in solving the ODE for shock position that results from pointwise errors in the $\hat v_i$. Indeed, the control we gain here is stronger than the control we would gain by transferring $L^2$ estimates to $L^\infty$ estimates on the numerical error in the $\hat v_i$ by using Lipschitz continuity (cf. Section \ref{sec:Linfty}).

\subsubsection{Future work for the systems case}

In this paper, we are careful to construct a theory we hope is flexible enough to extend to the systems case. As a product of using the relative entropy method and theory of shifts, we only use one entropy during our analysis. Furthermore, our analysis is done within the context of the theory of shifts and the framework first developed in \cite{2017arXiv170905610K}. One key tool is the dissipation estimate \Cref{neg_entropy_diss}.

There is hope our work here will generalize to systems. The framework first developed in \cite{2017arXiv170905610K} has had tremendous success when used with $a$-contraction techniques and applied to systems to show stability of small-$BV$ solutions \cite{chen2020uniqueness}. As discussed above, \cite{chen2020uniqueness} utilizes the front tracking algorithm to construct  sequences of $L^2$-stable approximate solutions.  In the present paper, we use the front tracking theory for the analysis of very small shocks in our numerical solution (see \Cref{front_tracking_shocks}). In the systems case, we remark that the front tracking theory is well-adapted for the analysis of small shocks.

Furthermore, the dissipation estimate \Cref{neg_entropy_diss} has near-identical analogues in the $a$-contraction theory (see \cite[Proposition 4.1]{move_entire_solution_system}, for example). Moreover, recently an important technical contribution to the $a$-contraction theory has greatly enhanced the control on small shocks \cite{2020arXiv201209261G}.


\section{Technical preliminary}\label{sec:technical}

For a function $u\in L^\infty(\mathbb{R}\times[0,T))$ that verifies the Strong Trace Property (\Cref{strong_trace_definition}), and a Lipschitz-continuous function $h\colon[0,T]\to\mathbb{R}$, we use the notation $u(h(t)\pm,t)$ to denote the  right- and left-hand traces $u_\pm(t)$ at a time $t$ when $u$ verifies the Strong Trace Property, where $u_\pm(t)$ is within the context of \Cref{strong_trace_definition}. For a time $t$ when $u$ has strong traces,  the values $u_\pm(t)$ are well-defined, and thus the values $u(h(t)\pm,t)$ are also well-defined.

Most of the results in this preliminary section are well-known. However, we include them here (along with proofs) to ensure that we are not using more information than we think. The solution $\bar{u}$ in the Main Theorem (\Cref{main_theorem}) is only assumed to be entropic for the convex entropy $\eta$ -- we do not use the Kruzhkov theory.

\subsection{Front tracking estimate}
The following Lemma will be fundamental to our control on the ``front tracking'' shocks (see \Cref{shocks_fig}).

\begin{lemma}[An estimate on approximate front tracking solutions]\label{L1_diss_lemma}
Consider initial data $v^0\colon\mathbb{R}\to\mathbb{R}$ which is monotonically decreasing, piecewise-constant and has finitely many discontinuities. Let $v$ be the (unique) exact solution to \eqref{system} with initial data $v^0$ and which is monotonically decreasing (in space) for each fixed time. Let $w$ be a function  which is piecewise-constant (in space) and monotonically decreasing (in space) for each fixed time. In particular, there exists $\tilde{N}\in\mathbb{N}$ and Lipschitz functions $\tilde h_1,\ldots,\tilde h_{\tilde {N}}\colon[0,\infty)\to\mathbb{R}$ such that $\tilde h_1(t)\leq\ldots\leq\tilde h_{\tilde {N}}(t)$ for all $t\in[0,\infty)$ (and if $\tilde h_{j}(t_0)=\tilde h_{k}(t_0)$ for some $j,k,t_0$, then $\tilde h_{j}(t)=\tilde h_{k}(t)$ for all $t>t_0$)  and $w(\cdot,t)$ is constant on each interval $[\tilde h_{i}(t),\tilde h_{i+1}(t)]$ for all $i\in\{0,\ldots,N\}$ (when $\tilde h_{i}(t)\neq\tilde h_{i+1}(t)$ and with $\tilde h_0=-\infty,\tilde h_{\tilde {N}+1}=+\infty $). Furthermore, assume

\begin{align}\label{infityassumption}
\norm{v(\cdot,0)-w(\cdot,0)}_{L^1(\mathbb{R})}<\infty.
\end{align}

Then, for almost every $t$,

\begin{multline}
    \frac{d}{dt} \norm{v(\cdot,t)-w(\cdot,t)}_{L^1(\mathbb{R})}
    \\
 \leq  \sum_{\alpha}\Bigg[\abs{w(\tilde{h}_\alpha(t)-,t)-  w(\tilde{h}_\alpha(t)+,t)}\abs{\dot{\tilde{h}}_\alpha-\sigma(w(\tilde{h}_\alpha(t)-,t),w(\tilde{h}_\alpha(t)+,t))}\Bigg].
    \end{multline}
 At time $t$, the summation runs over all $\alpha\in\{1,\ldots,\tilde{N}\}$ with the property that if there exists $\beta\in\{1,\ldots,\tilde{N}\}$ such that $\tilde h_{\alpha}(t)=\tilde h_{\beta}(t)$, then $\alpha\leq\beta$ (this way we are not double counting the dissipation from curves $\tilde h_\alpha$ which are stuck together).
\end{lemma}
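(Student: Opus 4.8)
The plan is to compute $\frac{d}{dt}\|v(\cdot,t)-w(\cdot,t)\|_{L^1(\mathbb{R})}$ directly by a regions argument, exploiting the fact that both $v$ and $w$ are monotone decreasing in space for each fixed time, so the set $\{x : v(x,t) > w(x,t)\}$ and its complement are each (essentially) finite unions of intervals whose endpoints move Lipschitz-continuously in time. First I would fix a time $t$ at which all the relevant objects are differentiable (almost every $t$, using that $v$ is the entropy solution with piecewise-constant monotone data, hence piecewise smooth/BV with finitely many shock curves, and that $w$ has the prescribed structure with finitely many fronts $\tilde h_\alpha$). On each maximal interval $(a(t),b(t))$ where $v-w$ has a constant sign, write
\begin{align}
\int_{a(t)}^{b(t)} \sgn(v-w)\,(v-w)\,dx
\end{align}
and differentiate, using Leibniz's rule. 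The boundary terms at $a(t),b(t)$ vanish because there $v=w$ (the sign changes), except possibly where a boundary coincides with a jump discontinuity; I would handle those by a limiting argument (the standard device of perturbing the crossing points, or noting $v-w$ is continuous from one side).

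The interior contribution is $\int \sgn(v-w)(\partial_t v - \partial_t w)\,dx$. Now split this across the fronts of $w$ and the shock curves of $v$. Where both $v$ and $w$ are smooth (classical), $\partial_t v = -\partial_x A(v)$ and $\partial_t w = -\partial_x A(w)$ distributionally, and an integration by parts against the (locally constant, hence zero-derivative in the interior) $\sgn(v-w)$ kills these terms up to jumps. The key step is then the jump analysis at each front $\tilde h_\alpha(t)$ of $w$: there $w$ jumps from $w(\tilde h_\alpha-,t)$ to $w(\tilde h_\alpha+,t)$ while $v$ is continuous, and the distributional time derivative of $w$ contributes a term $-\dot{\tilde h}_\alpha\,[w]_\alpha$ (with $[w]_\alpha = w(\tilde h_\alpha+,t) - w(\tilde h_\alpha-,t)$) localized on $\tilde h_\alpha$, while the flux term $\partial_x A(w)$ contributes $-[A(w)]_\alpha = -(A(w(\tilde h_\alpha+,t)) - A(w(\tilde h_\alpha-,t)))$. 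The contribution of this front to $\frac{d}{dt}\|v-w\|_{L^1}$ is therefore (after accounting for the sign $\sgn(v-w)$ which is $\pm 1$ and using that $v$ is continuous and monotone there so crosses $w$ in a controlled way) bounded by
\begin{align}
\big|\,[w]_\alpha\,\dot{\tilde h}_\alpha - [A(w)]_\alpha\,\big| = |[w]_\alpha|\,\big|\dot{\tilde h}_\alpha - \sigma(w(\tilde h_\alpha-,t),w(\tilde h_\alpha+,t))\big|,
\end{align}
using the definition \eqref{eq:RH} of the Rankine--Hugoniot speed. At the shock curves of $v$ itself, the Rankine--Hugoniot and Lax conditions for the entropy solution $v$, combined with $w$ being locally constant there, show those terms contribute nonpositively (the entropy solution is $L^1$-contractive against constants on each side), so they can be dropped; this is where I would invoke the monotonicity of both functions to see that $v$'s shocks only help.

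Finally I would sum over $\alpha$, taking care that when several fronts $\tilde h_\alpha$ have merged ($\tilde h_\alpha(t)=\tilde h_\beta(t)$) the merged curve carries a single jump $w(\tilde h_\alpha-,t)$ to $w(\tilde h_\alpha+,t)$ and should be counted once — which is exactly the indexing convention ($\alpha\le\beta$) in the statement. The main obstacle I anticipate is the bookkeeping at points where a sign-change crossing of $v-w$ coincides with a jump of $w$ (or with a shock of $v$): there the naive Leibniz computation has ill-defined boundary terms, and one must argue that the worst case is still dominated by the displayed right-hand side — I would resolve this either by a density/approximation argument (perturb $w(\cdot,0)$ slightly so crossings and jumps are disjoint, pass to the limit) or by working with the Kruzhkov-type entropy pairs $|v-k|$ and the doubling device adapted to the piecewise-constant structure, which automatically produces the correct one-sided boundary contributions. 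A secondary technical point is justifying differentiation under the integral sign for a.e. $t$, which follows from the finite, Lipschitz-in-time structure of all the interfaces and the uniform $L^\infty$ bounds.
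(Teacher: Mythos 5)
Your overall strategy—compute $\frac{d}{dt}\|v(\cdot,t)-w(\cdot,t)\|_{L^1(\mathbb{R})}$ by Leibniz at the (finitely many, Lipschitz-moving) jump points, use that $v$'s own discontinuities satisfy Rankine--Hugoniot and hence contribute nonpositively, and isolate the velocity error at $w$'s fronts—is the same strategy the paper uses, dressed in the classical sign-decomposition language rather than the paper's explicit add-and-subtract of $|v-w|\,\sigma(v,w)$ across each constancy interval. That much is fine.

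However, there is a genuine gap in how you propose to handle the "hard" boundary terms. You split $\mathbb{R}$ into maximal intervals $(a_k,b_k)$ on which $v-w$ has constant sign, and then claim the Leibniz boundary terms at $a_k,b_k$ vanish because $v=w$ there, with coinciding jumps to be handled by a density/approximation device ("perturb $w(\cdot,0)$ slightly so crossings and jumps are disjoint"). For the objects in this lemma this never happens: $v$ and $w$ are \emph{both piecewise constant} in $x$ (the entropy solution emanating from decreasing piecewise-constant data is itself piecewise constant), so $v-w$ is piecewise constant, and a piecewise-constant function can change sign only at a jump. Thus every endpoint $a_k,b_k$ \emph{is} a jump of $v$ or $w$, and there is no perturbation within the admissible class that separates "crossings" from "jumps"—you would only be moving the crossing to a different jump. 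Consequently the boundary-term analysis you relegate to a technical aside is in fact the entire content of the estimate: it is exactly what the paper's Case 3 (for $v$ continuous across the jump, $w^{\alpha+}<v^\alpha<w^{\alpha-}$) and the four ordered cases for a $v$-shock are doing, and none of those cases is dispatched by "$v=w$ at the boundary." Your integrand bookkeeping does produce the right answer when the sign of $v-w$ is constant across a given front (the paper's Cases 1 and 2), but the crossing-at-a-jump configurations have to be argued by hand—using the Rankine--Hugoniot relation for $v$'s shocks and the monotonicity $\partial_u\sigma,\partial_v\sigma>0$—and your sketch does not supply that argument. Until you spell out those cases, the proof is not closed.
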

\begin{remark}
Remark that $v$ and $w$ are so-called ``$\epsilon$\emph{-approximate}'' front tracking solutions in the sense of \cite{Bressan2000}. The proof of this Lemma is an immediate calculation in the scalar case. A version also holds for $\epsilon$\emph{-approximate} front tracking solutions in the systems case -- see \cite[Theorem 8.2]{Bressan2000}. In particular, \cite[Eqn (8.22)]{Bressan2000} can be roughly summarized as stating that the time derivative of the weighted $L^1$ distance between solutions grows at most like a constant times the velocity error at that time. The weighted $L^1$ norm, which includes a \emph{wave interaction potential} in the systems case, can be replaced by the unweighted $L^1$ norm in the scalar case.
\end{remark}

\begin{proof}
    We give a quick proof, largely following \cite[p.~155]{Bressan2000}. Remark we do not use the Kruzhkov entropies.
    
Let $x_1,\ldots,x_N$ be all the points where $v(\cdot,t)$ or $w(\cdot,t)$ have a jump.

We then have
\begin{multline}\label{disscalcl1}
     \frac{d}{dt} \norm{v(\cdot,t)-w(\cdot,t)}_{L^1(\mathbb{R})}=
     \\
     \sum_{\alpha}\Big[\abs{w(x_\alpha(t)-,t)-  v(x_\alpha(t)-,t)}-\abs{w(x_\alpha(t)+,t)-  v(x_\alpha(t)+,t)}\Big]\dot{x}_\alpha,
\end{multline}
where the sum is over all jumps in $v$ or $w$.

Since $w$ and $v$ are piecewise-constant, for $x\in(x_{\alpha-1},x_{\alpha})$, we immediately have
\begin{multline}\label{addsubtract}
    \abs{w(x,t)-v(x,t)}\sigma(w(x,t),v(x,t)=
    \\
    \abs{w(x_{\alpha-1}+,t)-v(x_{\alpha-1}+,t)}\sigma(w(x_{\alpha-1}+,t),v(x_{\alpha-1}+,t))
    =
    \\
    \abs{w(x_{\alpha}-,t)-v(x_{\alpha}-,t)}\sigma(w(x_{\alpha}-,t),v(x_{\alpha}-,t)).
\end{multline}

Furthermore, from \eqref{infityassumption} we have $\abs{w(x,t)-v(x,t)}\equiv 0$ for $x$ outside a large (but bounded) interval. We can thus add and subtract the terms \eqref{addsubtract} in \eqref{disscalcl1} without changing the total sum.

We receive,

\begin{multline}
     \frac{d}{dt} \norm{v(\cdot,t)-w(\cdot,t)}_{L^1(\mathbb{R})}=
     \\
     \sum_{\alpha}\Bigg[\abs{w(x_\alpha(t)+,t)-  v(x_\alpha(t)+,t)}(\sigma(w(x_\alpha(t)+,t),v(x_\alpha(t)+,t))-\dot{x}_\alpha)
     \\-\abs{w(x_\alpha(t)-,t)-  v(x_\alpha(t)-,t)}(\sigma(w(x_\alpha(t)-,t),v(x_\alpha(t)-,t))-\dot{x}_\alpha)\Bigg].
\end{multline}

Define ${v}^{\alpha\pm}\coloneqq v(x_\alpha(t)\pm,t)$ and $w^{\alpha\pm}\coloneqq w(x_\alpha(t)\pm,t)$.

Furthermore, define 
\begin{multline}\label{addzerotothis}
    E_\alpha\coloneqq \Bigg[\abs{w^{\alpha+}-  {v}^{\alpha+}}(\sigma(w^{\alpha+},{v}^{\alpha+})-\dot{x}_\alpha)
     -\abs{w^{\alpha-}-  {v}^{\alpha-}}(\sigma(w^{\alpha-},{v}^{\alpha-})-\dot{x}_\alpha)\Bigg].
\end{multline}

Then by adding zero, we get
\begin{multline}\label{line740}
  E_\alpha = \Bigg[\abs{w^{\alpha+}-  {v}^{\alpha+}}(\sigma(w^{\alpha+},{v}^{\alpha+})-\sigma(w^{\alpha-},w^{\alpha+}))
  \\
    -\abs{w^{\alpha-}-  {v}^{\alpha-}}(\sigma(w^{\alpha-},{v}^{\alpha-})-\sigma(w^{\alpha-},w^{\alpha+})) \Bigg] 
     \\
     +\Bigg[
     (\sigma(w^{\alpha-},w^{\alpha+})-\dot{x}_\alpha)\Big(\abs{w^{\alpha+}-  {v}^{\alpha+}}
          -\abs{w^{\alpha-}-  {v}^{\alpha-}}\Big)\Bigg]
          \\
          \coloneqq E_{\alpha,1}+E_{\alpha,2}.
\end{multline}

Assume for now  that
\begin{align}
    v(x_\alpha(t)+,t)=v(x_\alpha(t)-,t).
\end{align}

Then, define ${v}^\alpha\coloneqq v(x_\alpha(t)+,t)=v(x_\alpha(t)-,t)$.

Now, consider the following cases:

\begin{enumerate}
    \item[Case 1]$ {v}^\alpha<w^{\alpha+} < w^{\alpha-}$
    \item[Case 2]$w^{\alpha+} < w^{\alpha-} < {v}^\alpha$
    \item[Case 3]$w^{\alpha+} < {v}^\alpha < w^{\alpha-}$
\end{enumerate}

By symmetry $v\longleftrightarrow w$, the cases with a discontinuity in $v$, but not in $w$, are similar; we do not consider them here. 

We now handle the cases:

\uline{Case 1 and Case 2} Case 1 and Case 2 are similar. We handle only Case 1.

We rewrite the first term  ($E_{\alpha,1}$) of \eqref{line740},
\begin{multline}
\abs{w^{\alpha+}-  {v}^\alpha}(\sigma(w^{\alpha+},{v}^\alpha)-\sigma(w^{\alpha-},w^{\alpha+}))
    -\abs{w^{\alpha-}-  {v}^\alpha}(\sigma(w^{\alpha-},{v}^\alpha)-\sigma(w^{\alpha-},w^{\alpha+}))  =
     \\
     \sigma(w^{\alpha-},w^{\alpha+})\Big(\abs{w^{\alpha-}-  {v}^\alpha}-\abs{w^{\alpha+}-  {v}^\alpha}\Big)
     +\abs{w^{\alpha+}-v^\alpha}\sigma(w^{\alpha+},v^\alpha) -\abs{w^{\alpha-}-v^\alpha}\sigma(w^{\alpha-},v^\alpha).
\end{multline}
Then remark that in this case, $\abs{w^{\alpha-}-  {v}^\alpha}-\abs{w^{\alpha+}-  {v}^\alpha}=\abs{w^{\alpha-}-w^{\alpha+}}$ and then from the definition of the Rankine-Hugoniot speed (see \eqref{eq:RH}), we get that the first term of \eqref{line740} is identically zero. 

Thus, we conclude

\begin{equation}
    \abs{E_\alpha}\leq \abs{w^{\alpha-}-  w^{\alpha+}}\abs{\dot{x}_\alpha-\sigma(w^{\alpha-},w^{\alpha+})}.
\end{equation}

\uline{Case 3}

The first term of \eqref{line740} ($E_{\alpha,1}$) is nonpositive due to $\partial_u\sigma(u,v)=\partial_v\sigma(u,v)>0$. 

Thus, we conclude

\begin{equation}
    \abs{\max\{E_\alpha,0\}}\leq \abs{w^{\alpha-}-  w^{\alpha+}}\abs{\dot{x}_\alpha-\sigma(w^{\alpha-},w^{\alpha+})}.
\end{equation}

Assume now that 
\begin{align}
v(x_\alpha(t)+,t)<v(x_\alpha(t)-,t).
\end{align}

We then consider the additional 4 cases:

\begin{enumerate}
    \item[Case 1]$w^{\alpha+}<v^{\alpha+} < v^{\alpha-}<w^{\alpha-}$
    \item[Case 2]$v^{\alpha+}<w^{\alpha+} < w^{\alpha-}<v^{\alpha-}$
    \item[Case 3]$w^{\alpha+}<v^{\alpha+} < w^{\alpha-}<v^{\alpha-}$
    \item[Case 4]$v^{\alpha+}<w^{\alpha+} < v^{\alpha-}<w^{\alpha-}$
\end{enumerate}

Remark that for almost every $t$, either $(v(x_\alpha(t)+,t),v(x_\alpha(t)-,t),\dot{x}_\alpha)$ is a shock verifying Rankine-Hugoniot or $v^{\alpha+}= v^{\alpha-}$. In the case when ${v}$ is in $BV$, this is well-known. See e.g. \cite[Lemma 6]{Leger2011} or \cite[Lemma 5.2]{scalar_move_entire_solution}, where this is shown to hold even under the weaker Strong Trace Property (\Cref{strong_trace_definition}).

Then again by adding zero to \eqref{addzerotothis}, we get 

\begin{multline}\label{line7401}
  E_\alpha = \Bigg[\abs{w^{\alpha+}-  {v}^{\alpha+}}(\sigma(w^{\alpha+},{v}^{\alpha+})-\sigma(v^{\alpha-},v^{\alpha+}))
    \\
    -\abs{w^{\alpha-}-  {v}^{\alpha-}}(\sigma(w^{\alpha-},{v}^{\alpha-})-\sigma(v^{\alpha-},v^{\alpha+})) \Bigg] 
     \\
     +\Bigg[
     (\sigma(v^{\alpha-},v^{\alpha+})-\dot{x}_\alpha)\Big(\abs{w^{\alpha+}-  {v}^{\alpha+}}
          -\abs{w^{\alpha-}-  {v}^{\alpha-}}\Big)\Bigg]
          \\
          \coloneqq \tilde{E}_{\alpha,1}+\tilde{E}_{\alpha,2}.
\end{multline}

Then, in each of the Cases 1-4, we have $\tilde{E}_{\alpha,2}=0$ due to $(v(x_\alpha(t)+,t),v(x_\alpha(t)-,t),\dot{x}_\alpha)$ being a Rankine-Hugoniot shock, and {$\tilde{E}_{\alpha,1}<0$} due to $\partial_u\sigma(u,v)=\partial_v\sigma(u,v)>0$.

This concludes the proof.
\end{proof}

\subsection{Control on Lipschitz constant of a smooth solution}

The following Lemma will give control of the Lipschitz constant of a smooth solution to \eqref{system}.

\begin{lemma}[control on the Lipschitz constant of a smooth solution]\label{control_lipschitz_lem}
Let $\bar{u}^0\colon\mathbb{R}\to\mathbb{R}$ be in $L^\infty$ and Lipschitz-continuous. Let $\bar{u}$ be the unique  entropy weak solution to \eqref{system} with initial data $\bar{u}^0$. As long as $\bar{u}$ is Lipschitz-continuous it verifies 
\begin{align}\label{control_lipschitz}
    \rm{Lip}[\bar{u}(\cdot,t)]\leq \max\Bigg\{\abs{\frac{1}{\frac{1}{\sup (\bar u^0)'} +  t \inf A'' }},\abs{\frac{1}{\frac{1}{\inf (\bar u^0)'} +  t \inf A'' }}\Bigg\},
\end{align}
where $A''$ is the second derivative of the flux function $A$. Here, $\sfrac{1}{0} =+\infty$ and $\sfrac{1}{+\infty}=0$.
\end{lemma}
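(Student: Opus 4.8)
The plan is to prove \eqref{control_lipschitz} by the method of characteristics, using that on any time interval on which $\bar u$ stays Lipschitz it is a classical solution carried along straight characteristics. First I would recall the structure of such solutions: since $\bar u^0$ is bounded and Lipschitz and $A$ is strictly convex, up to the first shock the characteristics $t\mapsto X(t,x_0):=x_0+tA'(\bar u^0(x_0))$ do not cross, $\bar u$ is constant along each of them and equal to $\bar u^0(x_0)$, and for every fixed $t$ in the Lipschitz regime the map $\Phi_t\colon x_0\mapsto X(t,x_0)$ is an increasing bi-Lipschitz homeomorphism of $\mathbb R$ onto itself. Equivalently, $\bar u(\cdot,t)$ is given by the Lax--Oleinik formula, and the assertion that $\bar u$ is Lipschitz on the time interval under consideration is precisely the statement that no characteristics focus there.

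Next I would derive the pointwise formula for the spatial derivative. Fix $t$ and a point $x_0$ at which $\bar u^0$ is differentiable and at which $\Phi_t(x_0)$ is a differentiability point of the Lipschitz function $\bar u(\cdot,t)$; by Rademacher's theorem and bi-Lipschitzness of $\Phi_t$ this describes a full-measure set of $x_0$. For $x_0'$ near $x_0$ one has $\bar u(\Phi_t(x_0'),t)-\bar u(\Phi_t(x_0),t)=\bar u^0(x_0')-\bar u^0(x_0)$ and $\Phi_t(x_0')-\Phi_t(x_0)=(x_0'-x_0)+t\bigl(A'(\bar u^0(x_0'))-A'(\bar u^0(x_0))\bigr)$; forming the difference quotient of $\bar u(\cdot,t)$ at $\Phi_t(x_0)$, letting $x_0'\to x_0$, and using $A'\in C^1$ together with the chain rule in the denominator yields
\[ \partial_x\bar u(\Phi_t(x_0),t)=\frac{(\bar u^0)'(x_0)}{1+t\,A''(\bar u^0(x_0))\,(\bar u^0)'(x_0)}=\frac{1}{\bigl((\bar u^0)'(x_0)\bigr)^{-1}+t\,A''(\bar u^0(x_0))}, \]
interpreted via $1/0=+\infty$ and $1/(+\infty)=0$ in the case $(\bar u^0)'(x_0)=0$. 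The denominator is nonzero exactly because $\bar u(\cdot,t)$ is Lipschitz, which is where the hypothesis enters. The same formula can be obtained from the Riccati equation $\dot w=-A''(\bar u)\,w^2$ for $w=\partial_x\bar u$ along a characteristic, on which $\bar u$ and hence $A''(\bar u)$ is constant, but I prefer the difference-quotient derivation because it requires no extra regularity of $\bar u$.

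Finally I would take the supremum over $x_0$. Since $\Phi_t$ is onto, $\mathrm{Lip}[\bar u(\cdot,t)]$ equals the essential supremum over $x_0$ of the absolute value of the expression displayed above. Writing $p:=(\bar u^0)'(x_0)\in[\inf(\bar u^0)',\sup(\bar u^0)']$ and $c:=A''(\bar u^0(x_0))\ge\inf A''\ge 0$, I note that since $\bar u(\cdot,\tau)$ is Lipschitz for every $\tau\in[0,t]$ the quantity $p^{-1}+\tau c$ never vanishes on $[0,t]$, hence by continuity in $\tau$ keeps the sign of $p$ there; this rules out any sign ambiguity. A short case analysis on $\mathrm{sgn}(p)$, combined with the monotonicity of $(p,c)\mapsto(p^{-1}+tc)^{-1}$ in each argument on each sign branch and the bound $c\ge\inf A''$, then bounds $|\partial_x\bar u(\Phi_t(x_0),t)|$ by the right-hand side of \eqref{control_lipschitz} (the value $p=0$ contributing $0$), which completes the proof. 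The only step that is not a routine computation is the first one -- establishing, in the merely Lipschitz rather than $C^1$ setting, that $\bar u$ is transported along straight characteristics and that $\Phi_t$ is a bi-Lipschitz homeomorphism with the stated a.e.\ derivative; once that structural fact is available, the bound follows from the one-line formula above and an elementary optimisation.
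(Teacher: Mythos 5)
Your proof follows essentially the same route as the paper's: solve by characteristics, differentiate along the characteristic flow to obtain the Riccati-type formula for the spatial derivative, and optimise over characteristics. Your derivation via the change of variables $\Phi_t$ and difference quotients is somewhat more careful than the paper's formal differentiation of the implicit relation $\bar u(x,t)=\bar u^0(x-A'(\bar u)t)$, since it addresses a.e.\ differentiability in the merely Lipschitz setting, and your observation that $p^{-1}+\tau c$ keeps the sign of $p$ for all $\tau\in[0,t]$ by continuity is a clean way to handle the sign dichotomy.

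However, the final optimisation step has a genuine gap. Write $p:=(\bar u^0)'(x_0)$ and $c:=A''(\bar u^0(x_0))$. On the branch $p<0$ you have established $p^{-1}+tc<0$, hence
\[ |\partial_x\bar u(\Phi_t(x_0),t)|=\frac{1}{|p^{-1}+tc|}=\frac{1}{|p|^{-1}-tc}, \]
and this is an \emph{increasing} function of $c$, so substituting the lower bound $\inf A''\le c$ moves the estimate in the wrong direction; the extremal value of $c$ on this branch is $\sup A''$, not $\inf A''$. Concretely, with $p=-1$, $t=0.4$, $\inf A''=1$, $\sup A''=2$ and $\sup(\bar u^0)'=0$, the right-hand side of \eqref{control_lipschitz} equals $\max\{0,\,5/3\}=5/3$, while a characteristic carrying $p=-1$ and $c=2$ (still pre-shock, since its focusing time is $0.5$) gives $|\partial_x\bar u|=|(-1+0.8)|^{-1}=5$. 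So the inequality you want does not follow from the monotonicity you invoke together with $c\ge\inf A''$. This is not a flaw unique to your argument: the bound \eqref{control_lipschitz} as stated is correct only when $A''$ is constant (as for Burgers) or when $(\bar u^0)'\ge 0$ so that the second branch is vacuous, and the paper's own proof stops at the pointwise formula \eqref{derivubar} without performing the optimisation and so never confronts this point. To close your argument you should either impose one of those special cases, or observe that what your case analysis actually delivers is \eqref{control_lipschitz} with $\sup A''$ in place of $\inf A''$ in the second term of the maximum.
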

\begin{remark}

This result follows immediately from the proof of \cite[Theorem 6.1.1]{dafermos_big_book}.

We give a brief summary of the argument, specialized to the scalar conservation laws. We remark that we do not use the large family of Kruzhkov entropies.
\end{remark}

\begin{proof}

As long as the solution is Lipschitz-continuous it is given by the method of characteristics. 

We can express $\bar u$ implicitly as
\[ \bar u(x,t)= \bar u^0 (x- A'(\bar u(x,t))t)\]
and taking $x$-derivatives on both sides leads to
\[ \bar u_x(x,t)= (\bar u^0)'(x- A'(\bar u(x,t)t)  \cdot [ 1 - A''(\bar u(x,t)) t \bar u_x (x,t)]  \]
which (for $(\bar u^0)'\neq0$) can be rearranged as
\begin{align}\label{derivubar}
\bar u_x(x,t)= \frac{1}{ \frac{1}{ (\bar u^0)'(x- A'(\bar u(x,t)t) } +   A''(\bar u(x,t)) t  }.
\end{align} 
\end{proof}

\subsection{Extending the relative entropy inequality}
The results from this section will be instrumental in estimating $\norm{\bar u - \hat \psi}_{L^2}$.

\begin{lemma}[An extension of the entropy inequality]\label{global_entropy_dissipation_rate_systems}
Fix $T>0$. For $i=1,2$, consider  weak solutions $\hat{u}_i \in L^\infty(\mathbb{R}\times[0,T))$  to \eqref{system-numerical} with residuals $R_i \in L^\infty((0,T) \times \mathbb{R})$, where the residuals are defined by 
\begin{equation}
R_i:= \partial_t \hat u_i + \partial_x A(\hat u_i).
\end{equation} 
Moreover, assume that $\hat{u}_2$ is entropic for the strictly convex entropy $\eta\in C^2(\mathbb{R})$. Further, assume that $\hat{u}_1$ is Lipschitz.  


Then for all positive, Lipschitz-continuous test functions $\phi\colon\mathbb{R}\times[0,T)\to\mathbb{R}$ with compact support, we have
\begin{equation}
\begin{aligned}\label{combined1}
&\int\limits_{0}^{T} \int\limits_{-\infty}^{\infty} [\partial_t \phi \eta(\hat{u}_2(x,t)|\hat{u}_1(x,t))+\partial_x \phi q(\hat{u}_2(x,t);\hat{u}_1(x,t))]\,dxdt \\
&\hspace{.5in}+  \int\limits_{-\infty}^{\infty}\phi(x,0)\eta(\hat{u}_2^0(x)|\hat{u}_1^0(x))\,dx \\
&\hspace{.8in}\geq
\\
&\hspace{-.3in}\int\limits_{0}^{T} \int\limits_{-\infty}^{\infty}\phi\Bigg[R_2(x,t)(\eta'(\hat{u}_1(x,t))-\eta'(\hat{u}_2(x,t)))+\partial_x\hat{u}_1(x,t)\eta''(\hat{u}_1(x,t)) A(\hat{u}_2(x,t)|\hat{u}_1(x,t))
\\
&\hspace{1.2in}+R_1(x,t)\eta''(\hat{u}_1(x,t))[\hat{u}_2(x,t)-\hat{u}_1(x,t)]
\Bigg]\,dxdt.
\end{aligned}
\end{equation}

I.e., we have the following inequality in the sense of distributions 
\begin{multline}\label{combined1_distributional}
\partial_t \eta(\hat{u}_2(x,t)|\hat{u}_1(x,t))+\partial_x q(\hat{u}_2(x,t);\hat{u}_1(x,t))
\\
\leq -R_2(x,t)(\eta'(\hat{u}_1(x,t))-\eta'(\hat{u}_2(x,t)))-\partial_x\hat{u}_1(x,t)\eta''(\hat{u}_1(x,t)) A(\hat{u}_2(x,t)|\hat{u}_1(x,t))
\\
-R_1(x,t)\eta''(\hat{u}_1(x,t))[\hat{u}_2(x,t)-\hat{u}_1(x,t)].
\end{multline}

\end{lemma}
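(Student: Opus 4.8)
The plan is to derive \eqref{combined1} by writing down the distributional equations satisfied by $\eta(\hat u_2)$, by $\eta(\hat u_1)$, by $\eta'(\hat u_1)$, and by $\eta'(\hat u_1)A(\hat u_2)$, and then assembling the relative entropy $\eta(\hat u_2|\hat u_1) = \eta(\hat u_2) - \eta(\hat u_1) - \eta'(\hat u_1)(\hat u_2 - \hat u_1)$ term by term. First I would record that since $\hat u_2$ is entropic, $\partial_t\eta(\hat u_2) + \partial_x q(\hat u_2) \le -R_2\,\eta'(\hat u_2)$ in the distributional sense; this last term arises because testing the entropy inequality sees the residual through the chain rule applied to $\eta'(\hat u_2)R_2$ (one multiplies the defining relation $R_2 = \partial_t\hat u_2 + \partial_x A(\hat u_2)$ by $\eta'(\hat u_2)$ and uses \eqref{compat_cond}). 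Since $\hat u_1$ is Lipschitz, all manipulations involving $\hat u_1$ are classical (a.e.\ chain rule), so $\partial_t\eta(\hat u_1) + \partial_x q(\hat u_1) = \eta'(\hat u_1)R_1$ holds a.e., and likewise $\partial_t \eta'(\hat u_1) + A'(\hat u_1)\partial_x\eta'(\hat u_1) = \eta''(\hat u_1)R_1$.

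Next I would handle the bilinear term $\eta'(\hat u_1)(\hat u_2-\hat u_1)$. Because $\hat u_1$ is Lipschitz and $\hat u_2\in L^\infty$ is only a distributional solution, I would treat $\eta'(\hat u_1)$ (and $\partial_x\eta'(\hat u_1)$, $\eta''(\hat u_1)\partial_x\hat u_1$, etc.) as fixed Lipschitz/bounded coefficients and use them to build legitimate test functions. Concretely, for the term $\partial_t\big(\eta'(\hat u_1)\hat u_2\big) + \partial_x\big(\eta'(\hat u_1)A(\hat u_2)\big)$, I expand by Leibniz: the $\hat u_2$-factor contributes $\eta'(\hat u_1)(\partial_t\hat u_2 + \partial_x A(\hat u_2)) = \eta'(\hat u_1)R_2$ (this is exactly where the weak formulation \eqref{def_weak} for $\hat u_2$ gets used, with test function $\phi\,\eta'(\hat u_1)$), while the coefficient-factor contributes $\hat u_2\,\partial_t\eta'(\hat u_1) + A(\hat u_2)\,\partial_x\eta'(\hat u_1)$. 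Using $\partial_t\eta'(\hat u_1) = -A'(\hat u_1)\partial_x\eta'(\hat u_1) + \eta''(\hat u_1)R_1$, this becomes $-A'(\hat u_1)\hat u_2\,\partial_x\eta'(\hat u_1) + A(\hat u_2)\partial_x\eta'(\hat u_1) + \hat u_2\,\eta''(\hat u_1)R_1$. I would do the analogous expansion for $\eta'(\hat u_1)\hat u_1$, where everything is classical and produces the matching $\hat u_1$-terms.

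Then I would subtract: $\partial_t\eta(\hat u_2|\hat u_1) + \partial_x q(\hat u_2;\hat u_1)$ equals $[\partial_t\eta(\hat u_2) + \partial_x q(\hat u_2)] - [\partial_t\eta(\hat u_1)+\partial_x q(\hat u_1)] - [\partial_t(\eta'(\hat u_1)(\hat u_2-\hat u_1)) + \partial_x(\eta'(\hat u_1)(A(\hat u_2)-A(\hat u_1)))]$, after checking that $q(\hat u_2;\hat u_1) = q(\hat u_2) - q(\hat u_1) - \eta'(\hat u_1)(A(\hat u_2)-A(\hat u_1))$ matches \eqref{rel_entropy_flux}. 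Collecting the $R_2$-terms gives $-R_2(\eta'(\hat u_2) - \eta'(\hat u_1))$; collecting the $R_1$-terms gives, after combining $\eta'(\hat u_1)R_1$ (from $\eta(\hat u_1)$), $-(\hat u_2-\hat u_1)\eta''(\hat u_1)R_1$ and $-\eta'(\hat u_1)R_1$ (from the bilinear piece), exactly $-R_1\eta''(\hat u_1)(\hat u_2-\hat u_1)$; and the remaining coefficient-derivative terms collapse, via the algebraic identity $-A'(\hat u_1)(\hat u_2-\hat u_1) + (A(\hat u_2)-A(\hat u_1)) = A(\hat u_2|\hat u_1)$ (the definition \eqref{Z_def}) multiplied by $\partial_x\eta'(\hat u_1) = \eta''(\hat u_1)\partial_x\hat u_1$, into $-\eta''(\hat u_1)\partial_x\hat u_1\,A(\hat u_2|\hat u_1)$. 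This yields \eqref{combined1_distributional}, and testing against a nonnegative compactly supported Lipschitz $\phi$ (carrying the $\phi(x,0)$ boundary term from the $t$-integration by parts) gives \eqref{combined1}.

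The main obstacle I anticipate is purely a regularity/justification issue rather than an algebraic one: one must be careful that $\eta'(\hat u_1)$, $\partial_x\eta'(\hat u_1)$, $\eta''(\hat u_1)\partial_x\hat u_1$ are admissible multipliers on the distributional identities for $\hat u_2$. Since $\hat u_1$ is only Lipschitz (so $\partial_x\hat u_1\in L^\infty$ but not continuous), $\phi\,\eta'(\hat u_1)$ is a legitimate Lipschitz test function, but products like $\phi\,\partial_x\eta'(\hat u_1)$ are merely bounded, not Lipschitz, so the manipulations involving $\partial_x\eta'(\hat u_1)$ must all stay on the $\hat u_1$-side (where they are classical a.e.\ identities) and never be used to test the weak formulation of $\hat u_2$ directly. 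Organizing the computation so that $\hat u_2$ is only ever paired with genuinely Lipschitz coefficients — which the decomposition above does, since $R_2$ only ever appears multiplied by the Lipschitz function $\eta'(\hat u_1)-\eta'(\hat u_2)$ after a rearrangement, or handled via $\hat u_2\in L^\infty$ against bounded coefficients on the $t$-derivative side — is the delicate bookkeeping point; a standard mollification of $\hat u_1$ followed by passing to the limit is the safe route if one wants to be fully rigorous.
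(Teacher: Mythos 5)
Your argument is essentially the paper's proof (Appendix~A.2), organized slightly differently but built on the same key moves: decompose the relative entropy as $\eta(\hat u_2)-\eta(\hat u_1)-\eta'(\hat u_1)(\hat u_2-\hat u_1)$, use the residual-corrected entropy inequality for $\hat u_2$, use the classical a.e.\ chain rule for the Lipschitz $\hat u_1$ (both for $\eta(\hat u_1)$ and for $\eta'(\hat u_1)$), and, crucially, handle the bilinear piece by testing the weak formulations against $\Phi=\phi\,\eta'(\hat u_1)$, exactly as the paper does with \eqref{our_choice_for_Phi}. Your regularity bookkeeping --- that $\phi\,\eta'(\hat u_1)$ is a legitimate Lipschitz test function while products with $\partial_x\eta'(\hat u_1)$ are only bounded and must therefore stay on the classical $\hat u_1$-side --- is precisely the delicate point the paper implicitly relies upon, and you identify it correctly.

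There is, however, a sign slip that propagates to your final answer. Your stated inequality $\partial_t\eta(\hat u_2)+\partial_x q(\hat u_2)\le -R_2\,\eta'(\hat u_2)$ has the wrong sign: multiplying $R_2=\partial_t\hat u_2+\partial_x A(\hat u_2)$ by $\eta'(\hat u_2)$ and using $q'=\eta' A'$ gives, for smooth $\hat u_2$, the identity $\partial_t\eta(\hat u_2)+\partial_x q(\hat u_2)=\eta'(\hat u_2)R_2$, hence $\partial_t\eta(\hat u_2)+\partial_x q(\hat u_2)\le +\eta'(\hat u_2)R_2$ for entropy solutions with residual (this is what \eqref{u_entropy_integral_formulation} encodes). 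Likewise, the bilinear piece contributes $-\eta'(\hat u_1)R_2$, not $+\eta'(\hat u_1)R_2$, to the relative entropy rate, since $\eta'(\hat u_1)(\hat u_2-\hat u_1)$ enters the relative entropy with a minus sign. Collecting correctly yields $R_2(\eta'(\hat u_2)-\eta'(\hat u_1))=-R_2(\eta'(\hat u_1)-\eta'(\hat u_2))$ as in \eqref{combined1_distributional}; the $R_2$-term you wrote, $-R_2(\eta'(\hat u_2)-\eta'(\hat u_1))$, is its negative. Your $R_1$-terms accidentally come out right because the two corresponding sign slips there cancel. Fixing the sign in the entropy inequality and tracking the minus sign on the bilinear piece consistently makes your derivation coincide with the paper's.
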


The equation \eqref{combined1} bears resemblance to the pivotal inequality employed in Dafermos's demonstration of weak-strong stability, which provides a relative formulation of the entropy inequality (refer to equation (5.2.10) in \cite[p.~122-5]{dafermos_big_book}). However, unlike the original formulation, \eqref{combined1} takes into account residuals present in either of the two solutions. The derivation of \eqref{combined1} extends the well-known weak-strong stability proof by Dafermos and DiPerna \cite[p.~122-5]{dafermos_big_book}. 

For the reader's convenience, the proof of \Cref{global_entropy_dissipation_rate_systems} is in the appendix (\Cref{sec:proof_global_entropy_dissipation_rate_systems}).

\subsection{Control on shifts}
In order to control $\norm{\hat u - \hat \psi}_{L^1}$ we need to 
control shifts. This will be established using  the following lemma, which says that if a discontinuity in the second slot of the relative entropy is moving at the Rankine-Hugoniot speed of the discontinuity in the first slot of the relative entropy, then there is a creation of negative entropy dissipation proportional to the difference between the natural Rankine-Hugoniot speed of the shock in the second slot and the Rankine-Hugoniot speed of the shock in the first slot.

\begin{lemma}[negativity of entropy dissipation \protect{\cite[Proposition 2.1]{2020arXiv201103710C}}]\label{neg_entropy_diss}
Fix $\mathfrak{s},B>0$.  Let $u_+,u_-,\bar{u}_+,\bar{u}_-\in[-B,B]$ verify $u_-\geq u_+$, $\bar{u}_- - \bar{u}_+\geq \mathfrak{s}$.

Then, 
\begin{multline}\label{diss_neg_formula}
q(u_+;\bar{u}_+)-q(u_-;\bar{u}_-) - \sigma(u_+,u_-)\big(\eta(u_+|\bar{u}_+)-\eta(u_-|\bar{u}_-)\big)\\
\leq -\frac{1}{12}\inf{A''}\inf{\eta''}\mathfrak{s} \big((u_+-\bar{u}_+)^2+(u_--\bar{u}_-)^2\big),
\end{multline}
where $\sigma$ is defined in \eqref{eq:RH}, and the infima $\inf{A''}$ and $\inf{\eta''}$ run over the set $[-B,B]$.
\end{lemma}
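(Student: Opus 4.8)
The plan is to verify the inequality \eqref{diss_neg_formula} by a direct computation, expanding both sides in terms of the relative entropy quantities and using the convexity of $A$ and $\eta$ together with the sign/ordering hypotheses on $u_\pm,\bar u_\pm$. First I would recall the algebraic identity that relates the relative entropy-flux $q(\cdot;\cdot)$, the relative entropy $\eta(\cdot|\cdot)$, and the Rankine-Hugoniot speed. Specifically, for any states $a,b,c,d$ one can write
\begin{align*}
q(a;b)-q(c;d)-\sigma(a,c)\big(\eta(a|b)-\eta(c|d)\big)
\end{align*}
and the goal is to massage this into a manifestly nonpositive expression. The natural first move is to add and subtract terms so that the speed $\sigma(a,c)$ gets compared with $A'(b)$ and $A'(d)$ (the characteristic speeds at the background states), since $q(a;b)=q(a)-q(b)-\eta'(b)(A(a)-A(b))$ and $q'=\eta'A'$. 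This lets one trade flux differences for relative-flux quantities $A(a|b)$, which are controlled by $\inf A''$ via the exact remainder $A(a|b)=\tfrac12 A''(\xi)(a-b)^2$ for some intermediate $\xi$.

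Next I would organize the computation around the ``diagonal'' decomposition: write the left-hand side as a boundary-type flux term minus a convexity-defect term. Concretely, after pulling $\sigma(u_+,u_-)$ through, one expects an expression of the form
\begin{align*}
-\big(\sigma(\bar u_+,\bar u_-)-\sigma(u_+,u_-)\big)\cdot(\text{something}) \;-\; (\text{positive quadratic remainder}),
\end{align*}
where the quadratic remainder already furnishes part of the right-hand side of \eqref{diss_neg_formula}, and the mixed term has to be handled using the ordering $u_-\ge u_+$, $\bar u_-\ge\bar u_++\mathfrak s>0$. The role of the gap $\mathfrak{s}=\bar u_--\bar u_+$ is exactly to give a quantitative lower bound on $\inf A''\cdot\mathfrak s$-type contributions: monotonicity of $\sigma$ in each argument ($\partial_v\sigma=\partial_w\sigma$ has a sign controlled by $A''$) combined with the fixed gap $\mathfrak s$ is what produces the coefficient $\tfrac{1}{12}\inf A''\inf\eta''\mathfrak s$. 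Since this is cited as \cite[Proposition 2.1]{2020arXiv201103710C}, I would follow that reference's bookkeeping: introduce the function $G(v,w):=q(v;w)-\sigma\,\eta(v|w)$ for the relevant $\sigma$, Taylor expand in the difference variables $p_\pm:=u_\pm-\bar u_\pm$ around $p_\pm=0$, note $G$ and its first derivatives in $p$ vanish appropriately at the shock (Rankine-Hugoniot for $(\bar u_+,\bar u_-)$), and estimate the Hessian from below using strict convexity.

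The main obstacle I anticipate is keeping the constant explicit and getting the clean factor $\tfrac{1}{12}$: one must be careful that the second-order Taylor remainders in \emph{both} $A$ and $\eta$ are estimated simultaneously (hence the product $\inf A''\inf\eta''$), and that cross terms $p_+p_-$ are absorbed into $p_+^2+p_-^2$ without losing the $\mathfrak s$ factor — this is where an inequality like $2|p_+p_-|\le p_+^2+p_-^2$ costs a factor that, combined with the $\tfrac12$ from each Taylor remainder and the normalization of $\sigma$ as a divided difference, lands on $\tfrac{1}{12}$. A secondary subtlety is that $\sigma(u_+,u_-)$ is the speed of the shock in the \emph{first} slot while the gap hypothesis is on the \emph{second} slot; one has to verify that the relevant difference of speeds still carries the correct sign, which is where $u_-\ge u_+$ is used (so that $(u_+,u_-)$ is itself an admissible configuration and $\sigma(u_+,u_-)$ lies between $A'(u_+)$ and $A'(u_-)$). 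Once the sign bookkeeping is settled, the estimate is a routine — if slightly lengthy — application of Taylor's theorem with integral remainder; I would not expect any conceptual difficulty beyond the constant-tracking.
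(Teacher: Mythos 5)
The paper does not actually prove this lemma: it cites \cite[Proposition~2.1]{2020arXiv201103710C} and then remarks that the cited result is stated and proved for the quadratic entropy $\eta(u)=u^2/2$ and that the extension to a general strictly convex $C^2$ entropy is ``almost immediate'' (see also \cite[Lemma~4.1]{scalar_move_entire_solution} and \cite[p.~9--10]{serre_vasseur}). So the paper's route is: prove the quadratic-entropy case carefully in the cited reference, then observe that the extra factor of $\inf\eta''$ appears when one passes to a general convex entropy. Your plan, by contrast, is to re-prove the general case from scratch by Taylor expanding in the differences $p_\pm=u_\pm-\bar u_\pm$ and estimating the Hessian. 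That route is plausible in spirit but is not what the paper does, and, more importantly, you never actually carry it out: the whole proposal is written in the conditional (``I would recall\dots'', ``I would organize\dots'', ``I would not expect\dots''), the Taylor expansion is never written down, the Hessian of the functional is never computed, and the constant $\tfrac{1}{12}$ is never obtained. For a quantitative inequality whose entire content is the explicit constant, an unexecuted sketch is not a proof.

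There is also a genuine error in the plan. You write that after introducing $G(v,w)=q(v;w)-\sigma\,\eta(v|w)$ one should ``note $G$ and its first derivatives in $p$ vanish appropriately at the shock (Rankine--Hugoniot for $(\bar u_+,\bar u_-)$).'' That parenthetical is wrong on two counts. First, the lemma does \emph{not} assume that $(\bar u_+,\bar u_-)$ satisfies Rankine--Hugoniot; they are arbitrary states with gap $\geq\mathfrak{s}$, and the speed in the functional is $\sigma(u_+,u_-)$, the Rankine--Hugoniot speed of the \emph{first}-slot pair. Second, the vanishing of $G$ and of $\partial_v G$ at $v=w$ follows purely from the algebraic definitions of the relative quantities (namely $q(w;w)=0$, $\eta(w|w)=0$, $\partial_v q(v;w)|_{v=w}=q'(w)-\eta'(w)A'(w)=0$ by compatibility, and $\partial_v\eta(v|w)|_{v=w}=0$), and has nothing to do with Rankine--Hugoniot. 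You later correctly note that the speed belongs to the first slot and the gap to the second, but that observation contradicts the earlier parenthetical rather than fixing it. Finally, the Taylor-expansion plan omits the fact that $\sigma(u_+,u_-)$ itself depends on $p_\pm$ through $u_\pm=\bar u_\pm+p_\pm$, so the expansion must be done jointly in $(p_+,p_-)$ with the speed also varying; this is precisely where the cross terms and the gap $\mathfrak{s}$ enter, and it is the part of the bookkeeping that produces (or fails to produce) the advertised constant. Without carrying it out, there is no way to certify that the coefficient $\tfrac{1}{12}\inf A''\inf\eta''\mathfrak{s}$ actually emerges.
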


The proof of \Cref{neg_entropy_diss} follows almost immediately from  \cite{2020arXiv201103710C}, where the result is given (and proved) for the particular entropy $\eta(u)=\frac{u^2}{2}$. See also \cite[Lemma 4.1]{scalar_move_entire_solution} and \cite[p.~9-10]{serre_vasseur}.

\begin{remark}
The $\mathfrak{s}$ in \eqref{diss_neg_formula} shows that smaller shocks dissipate less relative entropy and, therefore, we will have less control on positions of small shocks than on  positions of large shocks.
\end{remark}

\begin{remark}
    For the systems case, the analogue of the dissipation estimate \Cref{neg_entropy_diss} would be along the lines of \cite[Proposition 4.1]{move_entire_solution_system}, for example. 
\end{remark}

\subsection{Pointwise difference between two solutions}
In order to obtain good control of shifts we need that the shocks being shifted do not become too small (see \Cref{neg_entropy_diss}). In practical computations, we can monitor, at any time, the minimal size of each shock by computing $\inf ( \hat v_i (\cdot,t) - \hat v_{i+1} ( \cdot,t )) $ where the infimum is taken over the set of points where the shock might be. 
In addition, we would like to argue that it is plausible that the difference $\hat v_i - \hat v_{i+1}$ does not decay quickly, since there are bounds for the speed of decay of $ v_i -  v_{i+1}$  for exact solutions $ v_i, v_{i+1}$. Here $\hat v_i, \hat v_{i+1},  v_i, v_{i+1}$ are in the context of the Main Theorem (\Cref{main_theorem}).

The pointwise difference between two solutions can be decreasing. However, we can control the rate at which the gap narrows:

\begin{lemma}[Rate of decrease on the gap between two solutions]\label{gap_lemma}
Fix $t,\rho>0$. Let $v_1^0\colon\mathbb{R}\to\mathbb{R}$ and  $v_2^0\colon\mathbb{R}\to\mathbb{R}$ be Lipschitz-continuous functions such that $v_1^0(x)-v_2^0(x)\geq \rho$ for all $x\in\mathbb{R}$. Assume that on the interval $[0,t)$,  unique classical solutions $v_i$  to the system \eqref{system} exist with initial data $v_i^0$ (with $i=1,2$). Then the difference between these solutions satisfies 
\begin{align}\label{gap_result}
v_1(x,t)-v_2(x,t)\geq  \frac{\rho}{1+
\mathfrak{A} \min\{{\rm Lip}[v_1^0],{ \rm Lip}[v_2^0]\} t}
\end{align}
for all $x \in\mathbb{R}$. Here, $\mathfrak{A}:= \sup A''$ where the $\sup$ is taken over the sets of values of $v_1, v_2$.
\end{lemma}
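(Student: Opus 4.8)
The plan is to track the evolution of the pointwise gap $w(x,t) := v_1(x,t) - v_2(x,t)$ along characteristics of $v_1$, or more symmetrically, to derive a differential inequality for the minimum of $w$ over $x$. First I would observe that on $[0,t)$ both $v_i$ are classical solutions, so they are constant along their respective characteristic lines: writing $y \mapsto X_i(y,s)$ for the characteristic of $v_i$ emanating from $y$ at time $0$, we have $v_i(X_i(y,s),s) = v_i^0(y)$ and $\dot X_i(y,s) = A'(v_i^0(y))$. The difficulty with a naive characteristic argument is that the characteristics of $v_1$ and $v_2$ do not coincide, so I cannot directly compare $v_1^0(y)$ with $v_2^0(y)$ at a common point. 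The cleaner route is to work with the spatial derivative structure: from Lemma~\ref{control_lipschitz_lem} (specifically the implicit representation \eqref{derivubar}), each $v_i$ satisfies, as long as it stays classical,
\begin{equation*}
\partial_x v_i(x,s) = \frac{1}{\dfrac{1}{(v_i^0)'(\xi_i(x,s))} + A''(v_i(x,s))\, s},
\end{equation*}
where $\xi_i(x,s) = x - A'(v_i(x,s))s$ is the foot of the characteristic. Since $(v_i^0)' \geq -\operatorname{Lip}[v_i^0]$ — actually I only need the \emph{lower} bound $\partial_x v_i \geq -\,\Lambda_i(s)$ where $\Lambda_i(s)$ is the quantity on the right of \eqref{control_lipschitz} — and since $\operatorname{Lip}[v_i(\cdot,s)]$ is controlled by Lemma~\ref{control_lipschitz_lem}, I get a uniform-in-$x$ Lipschitz bound $\operatorname{Lip}[v_i(\cdot,s)] \leq \Lambda_i(s)$ on the time interval of classical existence.

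Next I would set $m(s) := \inf_{x\in\mathbb{R}} w(x,s) = \inf_x (v_1 - v_2)(x,s)$, with $m(0) \geq \rho$, and estimate $\dot m$. Fix $s$ and a near-minimizing point $x_0$ (or argue via an approximate version if the infimum is not attained). At a point where the infimum of $w$ is attained we have $\partial_x v_1 = \partial_x v_2$ there; combined with the PDEs $\partial_s v_i = -A'(v_i)\partial_x v_i$, we get
\begin{equation*}
\partial_s w(x_0,s) = -A'(v_1)\partial_x v_1 + A'(v_2)\partial_x v_2 = \big(A'(v_2) - A'(v_1)\big)\partial_x v_1(x_0,s).
\end{equation*}
By the mean value theorem $A'(v_2) - A'(v_1) = -A''(\theta)\, w(x_0,s)$ for some intermediate value $\theta$, and $|\partial_x v_1(x_0,s)| \leq \min\{\Lambda_1(s),\Lambda_2(s)\}$ — here I use that at a minimizing point $\partial_x v_1 = \partial_x v_2$, so its magnitude is bounded by \emph{either} Lipschitz constant, hence by the minimum, which is where the $\min\{\operatorname{Lip}[v_1^0],\operatorname{Lip}[v_2^0]\}$ in \eqref{gap_result} comes from. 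Therefore $\partial_s w(x_0,s) \geq -|A''(\theta)|\min\{\Lambda_1,\Lambda_2\}\, w(x_0,s) \geq -\mathfrak{A}\min\{\operatorname{Lip}[v_1^0],\operatorname{Lip}[v_2^0]\}\, m(s)$, where $\mathfrak{A} = \sup A''$ over the relevant range of values. A standard argument (Danskin/envelope theorem for the infimum of a family, or working with sup-convolutions to make things rigorous) upgrades this to the differential inequality $\dot m(s) \geq -\mathfrak{A}\,L\, m(s)$ with $L := \min\{\operatorname{Lip}[v_1^0],\operatorname{Lip}[v_2^0]\}$; integrating gives $m(s) \geq \rho\, e^{-\mathfrak{A}L s}$, which already implies a bound but is weaker than \eqref{gap_result}.

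To get the sharper \emph{algebraic} (rather than exponential) rate in \eqref{gap_result}, I would refine the estimate on $\partial_x v_1(x_0,s)$ at the minimizing point. Rather than using the crude bound $\operatorname{Lip}[v_i(\cdot,s)] \leq \Lambda_i(s)$, I would use the exact representation \eqref{derivubar} together with the sign/monotonicity of $(v_i^0)'$: the function $s \mapsto \Lambda_i(s)$ itself already solves the ODE $\dot\Lambda_i = \mathfrak{A}\Lambda_i^2$ (this is exactly the Riccati structure behind \eqref{control_lipschitz}, since $\frac{d}{dt}\frac{1}{\frac1{c}+A''t} $ relates quadratically to the quantity), so $\Lambda_i(s) = \frac{L}{1 - \mathfrak{A}L s}$ in the bad case, but in the relevant case (where the gap is between two \emph{smooth} solutions that stay classical up to time $t$, so no blowup) one has the bound $|\partial_x v_1(x_0,s)| \le \frac{L}{1+ \mathfrak{A} \min\{\dots\} s}$-type control — decreasing in $s$. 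Feeding $|\partial_x v_1(x_0,s)| \leq \frac{L}{1 + \mathfrak{A}L s}$ into $\dot m \geq -\mathfrak{A}|\partial_x v_1| m \geq -\frac{\mathfrak{A}L}{1+\mathfrak{A}L s} m$ and integrating: $m(s) \geq m(0)\exp\!\big(-\int_0^s \frac{\mathfrak{A}L}{1+\mathfrak{A}L r}dr\big) = \rho\,(1+\mathfrak{A}L s)^{-1}$, which is exactly \eqref{gap_result}. I expect the \textbf{main obstacle} to be making the "differentiate the infimum at the minimizing point" step fully rigorous when the infimum over $x\in\mathbb{R}$ is not attained (noncompact domain) — the standard fix is to multiply $w$ by a slowly decaying weight or to localize using the finite speed of propagation so that the competition is effectively over a compact set, and then pass to the limit; alternatively one can run the characteristic-based comparison directly. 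A second, more bookkeeping-type obstacle is confirming that the Lipschitz bound on $v_i$ that feeds the estimate is itself \emph{decreasing} (not the blowing-up branch of \eqref{control_lipschitz}), which holds precisely because $v_1 - v_2$ bounded below forces the smooth pieces to be non-steepening — but this needs to be stated carefully using the hypothesis that classical solutions exist on all of $[0,t)$.
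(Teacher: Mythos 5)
Your approach is genuinely different from the paper's, and it has a concrete gap at the refinement step. The differential inequality $\dot m(s) \geq -\mathfrak{A}\,|p(s)|\,m(s)$, with $p(s) = \partial_x v_1(x_0,s) = \partial_x v_2(x_0,s)$ at the spatial minimizer, is correct, but the bound you feed in, $|p(s)| \leq \frac{L}{1+\mathfrak{A}Ls}$, is not provable. At a minimizer with $p>0$ both feet have $(v_i^0)' > 0$, and the Lax/characteristic formula \eqref{derivubar} gives $p = \big(\tfrac{1}{(v_i^0)'(\xi_i)} + s\,A''(v_i(x_0,s))\big)^{-1}$; to turn this into an upper bound on $p$ you must \emph{lower}-bound the $A''$ factor, so you obtain $|p(s)| \leq \frac{L}{1+Ls\inf A''}$ with $\inf A''$, not $\sup A''=\mathfrak{A}$, in the denominator. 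Substituting the provable bound and integrating yields $m(t) \geq \rho\,(1+Lt\inf A'')^{-\mathfrak{A}/\inf A''}$, which by Bernoulli's inequality is a \emph{weaker} lower bound than $\rho/(1+\mathfrak{A}Lt)$ whenever $A''$ is non-constant. So even granting the envelope-theorem step and the non-attained-infimum issue you flag, your route as written does not recover the lemma's constant.

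The paper avoids all of this. Its proof is a direct pointwise comparison, not a differential inequality for an infimum: fix $(x,t)$, trace both characteristics back to their feet $x_1^0, x_2^0$ so that $v_i(x,t)=v_i^0(x_i^0)$, observe from \eqref{transport_values_formula817} and the mean value theorem on $A'$ that $|x_1^0-x_2^0| = |A'(v_1^0(x_1^0))-A'(v_2^0(x_2^0))|\,t \leq \mathfrak{A}\,t\,\tilde\rho$ with $\tilde\rho := v_1(x,t)-v_2(x,t)$, and then compare the initial values at a common foot: $\tilde\rho = v_1^0(x_1^0)-v_2^0(x_2^0) \geq \big(v_1^0(x_1^0)-v_2^0(x_1^0)\big) - |v_2^0(x_1^0)-v_2^0(x_2^0)| \geq \rho - \mathrm{Lip}[v_2^0]\,\mathfrak{A}\,t\,\tilde\rho$. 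Solving this implicit inequality (and symmetrically with $\mathrm{Lip}[v_1^0]$) gives exactly \eqref{gap_result}. This is shorter, needs no control on the time evolution of $\mathrm{Lip}[v_i(\cdot,s)]$, no envelope/Danskin machinery for the infimum over $\mathbb{R}$, no sign analysis of $\partial_x v_i$ at minimizers, and delivers the sharp constant $\mathfrak{A}=\sup A''$ directly from the single application of the mean value theorem to $A'$. If you want to salvage the dynamical route you would need a more careful accounting of where $A''$ gets evaluated (in the Lax formula versus in the mean value theorem for $A'(v_1)-A'(v_2)$), but it is not clear the two uses can be matched up; the static characteristic-tracing argument circumvents the question entirely.
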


\begin{proof}

We are careful not to use the Kruzhkov entropies.

We assume that $t>0$, otherwise the proof is immediate.

It is well known that the solutions $v_1$ and $v_2$ are given by the method of characteristics. One possible reference, which does not use the Kruzhkov theory, is \cite[p.~176-177]{dafermos_big_book}. See also the proof of \cite[Lemma 2.1]{2017arXiv170905610K}.

Then fix $x\in\mathbb{R}$. Since the solutions $v_1$ and $v_2$ are given by the method of characteristics, there exists $x_1^0$ and $x_2^0$ such that
\begin{equation}
\begin{aligned}\label{transport_values817}
v_1(x,t)=v_1^0(x_1^0) \\
v_2(x,t)=v_2^0(x_2^0),
\end{aligned}
\end{equation}
and
\begin{align} \label{transport_values_formula817}
x=x_1^0+A'(v_1^0(x_1^0))t=x_2^0+A'(v_2^0(x_2^0))t,
\end{align}
where $A'$ is the derivative of the flux function.

Set $\tilde \rho= v_1(x,t) - v_2(x,t) = v_1^0(x_1^0) - v_2^0(x_2^0)$.
Then,
\[ | x_1^0 - x_2^0| = |A'(v_2^0(x_2^0))t - A'(v_1^0(x_1^0))t|  \leq \mathfrak{A} t \tilde \rho\]
and thus,
\[ \tilde \rho = v_1^0(x_1^0) - v_2^0(x_2^0) \geq v_1^0(x_1^0) - v_2^0(x_1^0) - |v_2^0(x_1^0) - v_2^0(x_2^0)|
\geq \rho - \mathfrak{A}  { \rm Lip}[v_2^0]\tilde \rho t.\]
Similarly,
\[ \tilde \rho = v_1^0(x_1^0) - v_2^0(x_2^0) \geq v_1^0(x_2^0) - v_2^0(x_2^0) - |v_1^0(x_1^0) - v_1^0(x_2^0)|
\geq \rho - \mathfrak{A}  { \rm Lip}[v_1^0]\tilde \rho t,\]
so that
\begin{align}
\tilde \rho \geq  \frac{\rho}{1+
\mathfrak{A} \min\{{\rm Lip}[v_1^0],{ \rm Lip}[v_2^0]\} t}.
\end{align}
\end{proof}

\subsection{$L^\infty$-control on numerical solutions} \label{sec:Linfty}

The following Lemma will help us reduce the computational cost of our numerical scheme in \Cref{do_not_sim_remark}.

\begin{lemma}[$L^\infty$-control on Lipschitz-continuous numerical solutions]\label{l_infty_control_lemma}
Fix $T,S>0$. Let $\bar{u}^0\colon\mathbb{R}\to\mathbb{R}$ be in $L^\infty(\mathbb{R})$, Lipschitz-continuous and nondecreasing. Let $\bar{u}$ be the unique  entropy weak solution to \eqref{system} with initial data $\bar{u}^0$. Let $\hat{u}$ be a Lipschitz-continuous solution to \eqref{system-numerical} with initial data $\hat{u}^0$ and residual $R$, defined by $R\coloneqq \partial_t \hat{u}+\partial_x A(\hat{u})$. Then, we have the following $L^\infty$-type estimate:
\begin{multline}\label{l_infty_control_result}
 \norm{\hat{u}(\cdot,T)-\bar{u}(\cdot,T)}_{L^\infty(-S,S)}^3
 \\
  \leq 8\big(\rm{Lip}[\bar{u}\restriction_{(-S,S)}(\cdot,T)]+\rm{Lip}[\hat{u}\restriction_{(-S,S)}(\cdot,T)]\big) 
  \\
  \cdot Ce^{CT}\Bigg(\norm{\hat{u}(\cdot,0)-\bar{u}(\cdot,0)}^2_{L^2(-S-sT,S+sT)}
  +\int\limits_0^T\int\limits_{-S-s(T-t)}^{S+s(T-t)}R^2(x,t)\,dxdt\Bigg),
\end{multline}
where $s,C>0$ are constants that depends only on $A$, $\eta$, $\norm{\bar{u}}_{L^\infty}$, and $\norm{\hat{u}}_{L^\infty}$.
\end{lemma}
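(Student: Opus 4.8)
The plan is to split the claimed bound into two independent pieces: an $L^2$ relative-entropy stability estimate between $\hat u$ and $\bar u$, and an elementary interpolation inequality turning an $L^2$ bound on a Lipschitz function into an $L^\infty$ bound. For the latter, writing $f := (\hat u - \bar u)(\cdot,T)$, $M := \norm{f}_{L^\infty(-S,S)}$ and $L := \textrm{Lip}[f\restriction_{(-S,S)}] \le \textrm{Lip}[\bar u\restriction_{(-S,S)}(\cdot,T)] + \textrm{Lip}[\hat u\restriction_{(-S,S)}(\cdot,T)]$, near a point $x_0$ where $\abs{f}$ is within a tolerance of $M$ one has $\abs{f(x)} \ge M - L\abs{x-x_0}$, so integrating $(M-Lr)^2$ over $r \in (0,M/L)$ (an interval fitting inside $(-S,S)$ on one side of $x_0$, using that $S$ is not too small) gives $\norm{f}_{L^2(-S,S)}^2 \ge \tfrac{M^3}{3L}$, i.e. $M^3 \le 3L\norm{f}_{L^2(-S,S)}^2 \le 8\big(\textrm{Lip}[\bar u\restriction_{(-S,S)}(\cdot,T)] + \textrm{Lip}[\hat u\restriction_{(-S,S)}(\cdot,T)]\big)\norm{f}_{L^2(-S,S)}^2$. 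Both Lipschitz constants are finite: $\textrm{Lip}[\hat u(\cdot,T)]$ by hypothesis, and $\bar u$ stays Lipschitz on $[0,T]$ because $\bar u^0$ is nondecreasing and $A$ is strictly convex so its characteristics never cross — quantitatively \Cref{control_lipschitz_lem}. In particular $\bar u$ is a classical solution with zero residual, entropic for $\eta$.

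For the $L^2$ estimate I would work with the relative entropy $\eta(\hat u|\bar u)$, i.e. with the \emph{non-entropic} but Lipschitz solution $\hat u$ in the first slot. Since both $\hat u,\bar u$ are Lipschitz, $\partial_t\hat u + \partial_x A(\hat u) = R$ and $\partial_t\bar u + \partial_x A(\bar u)=0$ hold a.e., and the chain rule (the two-Lipschitz-solutions specialization of the computation behind \Cref{global_entropy_dissipation_rate_systems}, i.e. Dafermos's relative entropy identity with a residual) gives, a.e.,
\[
\partial_t\eta(\hat u|\bar u) + \partial_x q(\hat u;\bar u) = \big(\eta'(\hat u)-\eta'(\bar u)\big)R - \eta''(\bar u)\,\partial_x\bar u\; A(\hat u|\bar u).
\]
The key observation is that the last term has a favorable sign and may simply be discarded: $\eta''>0$ by strict convexity, $A(\hat u|\bar u)\ge 0$ by convexity of $A$ (see \eqref{Z_def}), and $\partial_x\bar u \ge 0$ since $\bar u^0$ is nondecreasing and characteristics preserve the sign of the slope. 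Hence $\partial_t\eta(\hat u|\bar u)+\partial_x q(\hat u;\bar u)\le(\eta'(\hat u)-\eta'(\bar u))R$, and no derivative of $\hat u$ — hence no $\textrm{Lip}[\hat u]$ — enters the estimate.

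Then I would run the standard finite-speed-of-propagation argument. Set $B := \max\{\norm{\bar u}_{L^\infty},\norm{\hat u}_{L^\infty}\}$ and pick $s = s(A,\eta,B)>0$ large enough that $\abs{q(a;b)}\le s\,\eta(a|b)$ for $a,b\in[-B,B]$; this works because $q(a;b)=\int_b^a(\eta'(\xi)-\eta'(b))A'(\xi)\,d\xi$, so $\abs{q(a;b)}\le \|\eta''\|_\infty\|A'\|_\infty(a-b)^2$, while $\eta(a|b)\ge c^*(a-b)^2$ by \eqref{control_rel_entropy}. Integrating the inequality over the backward cone $K = \{(x,r): 0<r<T,\ -S-s(T-r)<x<S+s(T-r)\}$ (rigorously, against Lipschitz test functions approximating the indicator of $K$, as in \Cref{global_entropy_dissipation_rate_systems}) makes the two lateral boundary fluxes nonpositive, so with $g(r):=\int_{-S-s(T-r)}^{S+s(T-r)}\eta(\hat u|\bar u)(x,r)\,dx$, using Young's inequality on $(\eta'(\hat u)-\eta'(\bar u))R$ and then \eqref{control_rel_entropy},
\[
g'(r) \le C\,g(r) + C\int_{-S-s(T-r)}^{S+s(T-r)} R^2(x,r)\,dx ,
\]
with $C$ depending only on $A,\eta,B$. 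Gronwall gives $g(T)\le e^{CT}\big(g(0)+C\int_0^T\int_{-S-s(T-t)}^{S+s(T-t)} R^2\,dx\,dt\big)$, and $g(T)\ge c^*\norm{(\hat u-\bar u)(\cdot,T)}_{L^2(-S,S)}^2$, $g(0)\le c^{**}\norm{(\hat u-\bar u)(\cdot,0)}_{L^2(-S-sT,S+sT)}^2$; combined with the interpolation inequality of the first paragraph this yields \eqref{l_infty_control_result}.

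The routine steps (the chain-rule identity, Young, Gronwall, the cone truncation) carry no real difficulty; the one point not to be overlooked — and the reason the constant $C$ can be kept free of Lipschitz norms, matching the statement — is the sign of $-\eta''(\bar u)\,\partial_x\bar u\,A(\hat u|\bar u)$, which uses precisely that $\bar u^0$ is nondecreasing and $A$ is convex. Without nondecreasing data this term would have to be retained and the estimate would degrade (logarithmically in $\textrm{Lip}[\bar u^0]$ via \Cref{control_lipschitz_lem}). The only other mildly delicate points are choosing $s$ so the boundary fluxes in the cone argument vanish, and, in the $L^\infty$-to-$L^2$ step, ensuring the near-maximizing interval of $f$ fits inside $(-S,S)$ — which needs $S$ not too small and accounts for the slack factor $8$ rather than $3$.
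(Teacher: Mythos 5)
Your proposal is correct and follows essentially the same route as the paper's proof: the same relative-entropy / backward-cone Gronwall argument with $\hat u$ placed against the nondecreasing classical solution $\bar u$ so that the $-\eta''(\bar u)\,\partial_x\bar u\, A(\hat u|\bar u)$ term may be discarded, followed by the same Lipschitz $L^2$-to-$L^\infty$ interpolation that produces the cubic left-hand side. Your interpolation step is in fact slightly sharper (constant $3$ rather than the paper's simpler half-maximum argument giving $8$), but this is immaterial.
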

\begin{remark}
For the numerical solution $\hat{u}$, the Lipschitz constant can be calculated a posteriori. For the exact solution $\bar{u}$,  we can use \Cref{control_lipschitz_lem} to estimate the Lipschitz constant.
\end{remark}
\begin{proof}

We basically follow the famous weak-strong stability proof of Dafermos and DiPerna (see for example \cite{dafermos_big_book}). We work within the cone of information.  We define
\begin{align}
h_{\text{left}}(t)\coloneqq -S-s(T-t),\\
h_{\text{right}}(t)\coloneqq S+s(T-t),
\end{align}
where $s>0$ is chosen such that $\abs{q(a;b)}\leq s\eta(a|b)$ for all $a,b$ verifying 
\begin{align}
    \abs{a},\abs{b}\leq\max\{\norm{\bar{u}}_{L^\infty},\norm{\hat{u}}_{L^\infty}\}.
\end{align}

Then, we integrate \eqref{combined1_distributional} on the region 
\begin{align}
S\coloneqq \{(x,t) \in \mathbb{R}\times[0,T] | h_{\text{left}}(t) < x < h_{\text{right}}(t)\}.
\end{align}

Within the context of \eqref{combined1_distributional}, we choose $\hat{u}_1\coloneqq \bar{u}$, $R_1\coloneqq 0$, $\hat{u}_2\coloneqq \hat{u}$, and $R_2\coloneqq R$.

This gives

\begin{multline}\label{combined1_distributional_infinity}
\int\limits_{-S}^{S} \eta(\hat{u}(x,T)|\bar{u}(x,T))\,dx\leq \int\limits_{-S-sT}^{S+sT} \eta(\hat{u}^0(x)|\bar{u}^0(x))\,dx
\\
+\int\limits_0^T\int\limits_{h_{\text{left}}(t)}^{h_{\text{right}}(t)} -R(x,t)(\eta'(\bar{u}(x,t))-\eta'(\hat{u}(x,t)))-\partial_x\bar{u}(x,t)\eta''(\bar{u}(x,t)) A(\hat{u}(x,t)|\bar{u}(x,t))\,dxdt.
\end{multline}
Remark that we have used that $\dot{h}_{\text{left}}=s$, $\dot{h}_{\text{right}}=-s$ and $s$ is chosen such that $\abs{q(a;b)}\leq s\eta(a|b)$ for all $a,b$ verifying $\abs{a},\abs{b}\leq\max\{\norm{\bar{u}}_{L^\infty},\norm{\hat{u}}_{L^\infty}\}$.


It is classical  that due to $\bar{u}^0$ being non-decreasing, $\bar{u}(\cdot,t)$ will be non-decreasing for all $t>0$ (for one source of a proof of this without using the Kruzhkov theory, see \cite[Lemma 2.1]{2017arXiv170905610K}). Remark also that due to the convexity of the flux $A$, $A(\hat{u}(x,t)|\bar{u}(x,t))\geq0$. Further, recall that $\eta''>0$.  Thus, 
\begin{align}
-\partial_x\bar{u}(x,t)\eta''(\bar{u}(x,t)) A(\hat{u}(x,t)|\bar{u}(x,t))\leq 0.
\end{align}
Thus, \eqref{combined1_distributional_infinity} simplifies to
\begin{multline}\label{combined2_distributional_infinity}
\int\limits_{-S}^{S} \eta(\hat{u}(x,T)|\bar{u}(x,T))\,dx\leq \int\limits_{-S-sT}^{S+sT} \eta(\hat{u}^0(x)|\bar{u}^0(x))\,dx
\\
- \int\limits_0^T\int\limits_{h_{\text{left}}(t)}^{h_{\text{right}}(t)}R(x,t)(\eta'(\bar{u}(x,t))-\eta'(\hat{u}(x,t)))\,dxdt.
\end{multline}

From \eqref{combined2_distributional_infinity}, Hölder's inequality, and Young's inequality we get
\begin{equation}
\begin{aligned}\label{gronwall_this}
  \int\limits_{-S}^{S} \eta(\hat{u}(x,T)|\bar{u}(x,T))\,dx\leq  \int\limits_{-S-sT}^{S+sT}\eta(\hat{u}^0(x)|\bar{u}^0(x))\,dx
+ \frac{1}{2}{\int\limits_0^T\int\limits_{h_{\text{left}}(t)}^{h_{\text{right}}(t)}R^2(x,t)\,dxdt}
\\
+\frac{1}{2}{\int\limits_0^T\int\limits_{h_{\text{left}}(t)}^{h_{\text{right}}(t)}(\eta'(\bar{u}(x,t))-\eta'(\hat{u}(x,t)))^2\,dxdt}.
\end{aligned}
\end{equation}

Then, we apply the Gronwall technique to \eqref{gronwall_this}. We recall \eqref{control_rel_entropy}.
This yields
\begin{multline}\label{result_gronwall_infty}
\norm{\hat{u}(\cdot,T)-\bar{u}(\cdot,T)}^2_{L^2(-S,S)}
\\
\leq Ce^{CT}\Bigg(\norm{\hat{u}(\cdot,0)-\bar{u}(\cdot,0)}^2_{L^2(-S-sT,S+sT)}+\int\limits_0^T\int\limits_{h_{\text{left}}(t)}^{h_{\text{right}}(t)}R^2(x,t)\,dxdt\Bigg),
\end{multline}
where the constant $C>0$ depends only on $\eta$, $\norm{\bar{u}}_{L^\infty}$, and $\norm{\hat{u}}_{L^\infty}$.

On the other hand, we can use  $\norm{\hat{u}(\cdot,T)-\bar{u}(\cdot,T)}_{L^\infty}$ and the Lipschitz continuity of $\hat{u}$ and $\bar{u}$ to control the $L^2$ norm from below:

Let $D\coloneqq \norm{\hat{u}(\cdot,T)-\bar{u}(\cdot,T)}_{L^\infty(-S,S)}$. Then, due to Lipschitz continuity of $\hat{u}$ and $\bar{u}$, we know that  $\abs{\hat{u}(x,T)-\bar{u}(x,T)}>\frac{D}{2}$ on an interval of length at least $L$, where
\begin{align}
L\coloneqq \frac{\frac{D}{2}}{\rm{Lip}[\bar{u}\restriction_{(-S,S)}(\cdot,T)]+\rm{Lip}[\hat{u}\restriction_{(-S,S)}(\cdot,T)]}.
\end{align}

Then, 
\begin{align}\label{control_l2_below}
\norm{\hat{u}(\cdot,T)-\bar{u}(\cdot,T)}^2_{L^2}\geq L\big(\frac{D}{2}\big)^2=\frac{D^3}{8\big(\rm{Lip}[\bar{u}\restriction_{(-S,S)}(\cdot,T)]+\rm{Lip}[\hat{u}\restriction_{(-S,S)}(\cdot,T)]\big)}.
\end{align}

Then, we combine \eqref{result_gronwall_infty} and \eqref{control_l2_below} to yield
\begin{multline}
L\big(\frac{D}{2}\big)^2=\frac{D^3}{8\big(\rm{Lip}[\bar{u}\restriction_{(-S,S)}(\cdot,T)]+\rm{Lip}[\hat{u}\restriction_{(-S,S)}(\cdot,T)]\big)} 
\\
\leq Ce^{CT}\Bigg(\norm{\hat{u}(\cdot,0)-\bar{u}(\cdot,0)}^2_{L^2(-S-sT,S+sT)}+\int\limits_0^T\int\limits_{h_{\text{left}}(t)}^{h_{\text{right}}(t)}R^2(x,t)\,dxdt\Bigg).
\end{multline}

This gives \eqref{l_infty_control_result}.
\end{proof}

\section{Construction of  $\hat{u}$ and ${\hat{\psi}}$}\label{construction_section}

In this section, we detail how the function $\hat{u}$  is constructed in our numerical experiments. We also give the (theoretic) construction of the ancillary function ${\hat{\psi}}$.

\vspace{.1in}

We approximate initial data $\bar{u}^0 \in \inisp$, by some $\tilde u^0 \in \inise$  by decomposing the rapidly decreasing parts of $\bar{u}^0$ (i.e., intervals where $\partial_x\bar{u}^0<-\epsilon$) into step functions with steps of length $\delta>0$ where $\delta $ is a parameter that we can choose. Let $\tilde u^0$ have discontinuities at points $x_1, \dots, x_N$.  Let $x_0=-\infty$ and $x_{N+1}=+\infty$.
Then, we extend each $\tilde u^0|_{(x_i, x_{i+1})}$ (for $i=0,\ldots,N+1$) by the method described in \Cref{ext_section}
to a function $v_i^0$ on all of $\mathbb{R}$. Let $v_i$ denote the exact (smooth) solution to \eqref{system} emanating from the initial data $v_i^0$.

From now on, we distinguish the set of intervals $(x_i, x_{i+1})$ that correspond to intervals in which $\partial_x \bar u^0$ is bounded from below by $- \epsilon$  from the set of intervals that are sub-intervals of intervals on which $\partial_x \bar u^0$ is bounded from above by $- \epsilon$. We denote these sets of intervals by $\mathcal{I}_{nnd}$ (nnd=nearly non-decreasing) and $\mathcal{I}_{rd}$ (rd=rapidly decreasing), respectively.

For intervals $(x_i, x_{i+1}) \in \mathcal{I}_{nnd} $, we project $v_i^0$ onto a space of piece-wise polynomials with mesh width $h>0$, which leads to functions $u^0_{h,i}$ that can be evolved by a numerical scheme. Reconstructing them according to the strategy given in \cite{GiesselmannMakridakisPryer}, gives us Lipschitz functions $\hat v_i^0$ on all of $\mathbb{R}$. Afterwards, we evolve the functions  $u^0_{h,i}$ using Runge-Kutta discontinuous Galerkin schemes, and reconstruct them based on the techniques described in \cite{DednerGiesselmann} -- giving rise to functions $\hat v_i$. It was demonstrated in \cite{DednerGiesselmann} that these functions have residuals $R_i$ (see \eqref{system-numerical}) of size $h^{q+1}$ {in the $L^2$-norm} if polynomials of degree $q$ and upwind fluxes  are used in the dG scheme, \emph{provided the problems with the extended initial data  $v_i^0$  have smooth solutions.}

If the $v_i$ have kinks, i.e. points with different left- and right-sided derivatives in $x$, this reduces the convergence order of the $L^2$ norm of residuals for meshes that are not aligned with the kinks.

To mitigate this issue, in \Cref{ext_section}, we construct the global extensions of the initial data in such a way that the $v_i$ are $C^1$ in the regions where they are expected to be revealed\footnote{Remark that by \cite[Theorem 6.1.1]{dafermos_big_book}, if $v_i^0$ is $C^1$, then so is $v_i$ for as long as $v_i$ is Lipschitz.} in either $\hat u$ or $\hat \psi$.

For the $v_i^0$ (based on the constructions from \Cref{ext_section}) corresponding to intervals $(x_{i-1}, x_{i})\in  \mathcal{I}_{rd} $ (rapidly decreasing intervals), we do not actually numerically simulate these $v_i^0$ in the computer and instead assume (in the numerical scheme we implement in the computer) the $\hat v_i$ are globally constant. We postpone the discussion of these $v_i^0$ to \Cref{constructuhat} below.

\begin{remark}
It is noticeable that we do not have many requirements on which numerical solver {and reconstruction procedure} we use to create numerical solutions {and reconstructions} for our analysis to be applicable. The solver needs to be such that we can compute reconstructions of the numerical solution that preserve the pointwise ordering of the different $\hat{v}_i$ (see \eqref{ordering_vs}). 
Remark that for the exact solution, the ordering 
follows from the 
ordering of the initial data (see \Cref{gap_lemma}).  See also \Cref{ordering_remark} after the Main Theorem (\Cref{main_theorem}).
\end{remark}

\subsection{Construction of $\hat u$}\label{constructuhat}

The numerical solution $\hat{u}$ is constructed from the $\hat v_i$ as follows. For those discontinuities in $\hat u^0$ that correspond to ``large'' shocks and ``boundary'' shocks (see \Cref{shocks_fig})
the following ODEs are solved numerically starting at time $t=0$, e.g. by some Runge-Kutta method:
\begin{align}\label{ODEfor_u_hat}
    \begin{cases}
    \dot{\hat{h}}_i(t)=\sigma(\hat{v}_i(\hat{h}_i(t),t),\hat{v}_{i+1}(\hat{h}_i(t),t)) \text{ for } t>0,\\
    \hat{h}_i(0)=x_i,
    \end{cases}
\end{align}
where the $x_i$ are as in the definition of the space $\inise$ (see \eqref{LIWASe}).
Since $\sigma$ and the $\hat{v}_i$ are at least Lipschitz this problem is well-posed. If the numerical scheme for computing the $\hat{v}_i$ is higher order we should use a Runge-Kutta scheme of sufficient order so that its contribution to the position error in $\hat{h}_i$ is smaller than other sources of position errors.  Remark that for the ``front tracking'' shocks in $\hat u$, due to the $\hat v_\ell$ and $\hat v_r$, for the left- and right-hand states of such shocks, respectively, being globally constant (see \Cref{constructuhat}) we do not need Runge-Kutta and instead can employ a simple scalar front tracking scheme.

Consider the first time $\hat{t}_1$ such that $\hat{h}_i(t_1)=\hat{h}_{i+1}(t_1)$ for some $i$.
Then we define for $0\leq t\leq t_1$,
\begin{align}
\hat{u}(x,t)\coloneqq
    \begin{cases}
    \hat{v}_1(x,t) &\text{if } x<\hat{h}_1(t),\\
    \hat{v}_2(x,t) &\text{if } \hat{h}_1(t)<x<\hat{h}_2(t),\\
    &\vdots\\
    \hat{v}_{N+1}(x,t) &\text{if } \hat{h}_N(t)<x.\\
    \end{cases}
\end{align}
Note that the $\hat{h}_i$ are generalized characteristics for $\hat{u}$. 

At time $\hat{t}_1$, at least two of the curves $\hat{h}_i$ have touched. In particular, $\hat{h}_i(\hat{t}_1)=\hat{h}_{i+1}(\hat{t}_1)$ for some $i$. For these two curves that touch, we forget about the $\hat{v}_{i+1}$ that was between them. For time $t>\hat{t}_1$, we redefine $\hat{h}_i$ to solve the ODE
\begin{align}
    \dot{\hat{h}}_i(t)=\sigma(\hat{v}_i(\hat{h}_i(t),t),\hat{v}_{i+2}(\hat{h}_i(t),t)) \text{ for } t>\hat{t}_1,
\end{align}
keeping the value of $\hat{h}_i$ at $t=\hat{t}_1$ unchanged.
Further, we define 
\begin{align}\label{label_glue}
 \hat{h}_i(t)=\hat{h}_{i+1}(t)
\end{align} 
for all $t>\hat{t}_1$. In effect, we are gluing the two curves together as soon as they touch, forgetting about the  $\hat{v}_{i+1}$ that was between them and redefining the ODE accordingly. We do a similar thing if $\hat{h}_i(t)$, $\hat{h}_{i+1}(t)$ and $\hat{h}_{i+2}(t)$ touch simultaneously at time $\hat{t}_1$: for time $t>\hat{t}_1$, we redefine $\hat{h}_i$ to solve the ODE
\begin{align}
    \dot{\hat{h}}_i(t)=\sigma(\hat{v}_i(\hat{h}_i(t),t),\hat{v}_{i+3}(\hat{h}_i(t),t)) \text{ for } t>\hat{t}_1,
\end{align}
keeping the value of $\hat{h}_i$ at $t=\hat{t}_1$ unchanged.
Further, we define 
\begin{align}
 \hat{h}_i(t)=\hat{h}_{i+1}(t)=\hat{h}_{i+2}(t)
\end{align} 
for all $t>\hat{t}_1$. Similarly if four curves touch simultaneously, etc.

We do this for each set of curves of discontinuity in $\hat{u}$ that touch at time $\hat{t}_1$.
This inductive process defines the $\hat{u}$. Remark how the labeling given by \eqref{label_glue} allows to refer to the same curve by the same index, even after collisions (and similarly for $\hat\psi$, which we construct below).

\subsubsection{The $\hat v_i$ from rapidly decreasing intervals}\label{do_not_sim_remark}
    For the $v_i^0$ (based on the constructions from \Cref{ext_section}) corresponding to intervals $(x_{i-1}, x_{i})\in  \mathcal{I}_{rd} $ (rapidly decreasing intervals), we do not actually numerically simulate these $v_i^0$. Simulating these $v_i^0$ would require simulating $\mathcal{O}(\delta^{-1})$ numerical solutions, i.e. the number of numerical solutions would increase with mesh refinement, tremendously increasing the computational costs.
    
    However, we cannot simply take $\hat{v}_i\coloneqq v_i$, where $v_i$ is the exact entropic solution with initial data $v_i^0$, because then we do not know (even a posteriori) if the ordering condition $\hat v_j(x,t) > \hat v_{j+1}(x,t)$  (for all $j$ and for all $(x,t)$) is verified\footnote{For the scalar conservation laws it is possible to use the Lax-Ole\u{\i}nik formula (see \cite{MR2597943}) for the exact solution $v_i$ to check a posteriori the ordering condition $\hat v_j(x,t) > \hat v_{j+1}(x,t)$ for all $j$ (with $\hat{v}_i\coloneqq v_i$), but this is extremely computationally expensive and has no analogue for the general systems case.}.

    Instead, we do the following. Consider a global extension $v_i^0$ (see \Cref{ext_section})
    for an interval $(x_{i-1}, x_{i})\in  \mathcal{I}_{rd} $. The $v_i^0$ is given by \eqref{middle_case}, where the slope $M$ is given by \eqref{slopeM}. Then we numerically simulate a solution to \eqref{system} with initial data 

\begin{align}\label{data_for_Lambda}
    v^0(x_{i-1}+)+(x-x_J^i) M
\end{align}
globally on $\mathbb{R}$, where the value $x^i_J$ is defined in \eqref{x_J_def}. Call the numerical solution $\hat \Lambda$. For reference, we will call the exact solution with initial data \eqref{data_for_Lambda} $\Lambda$. Remark that in the context of \Cref{ext_section}, $\hat u^0$ is playing the role of $v^0$.

Then, by simply translating the solution $\hat\Lambda$ in space, we can obtain also a numerical solution with initial data 
\begin{align}
v^0(x_i-)+(x-x_I^i ) M 
\end{align}
globally on $\mathbb{R}$, where the value $x^i_I$ is defined in \eqref{x_I_def}. Call this solution $\hat P$.

Further, based on \eqref{middle_case}, we can define a numerical version of $v_i$ as follows
\begin{multline}\label{numerical_small}
\hat{v}_i(x,t)\coloneqq \min\Bigg\{\max\Big\{v^0(x_i-)+(x_I^i-x_i) \frac{\mathrm{d}-}{\mathrm{d}x}v^0(x_{i}),\\ \sup_{x\in(x_i,\infty)}v^0(x)+\sum_{k=i}^{N}(v^0(x_k-)-v^0(x_k+))\\+\sum_{k={i+1}}^{N}\Big((x_I^k-x_k)\abs{\frac{\mathrm{d} -}{\mathrm{d}x}v^0(x_k)}\Big)\Big\},\\
\max\Big\{\hat P,\min\{v^0(\frac{x_{i-1}+x_{i}}{2}),\hat\Lambda\},\\
\min\Big\{v^0(x_{i-1}+)+(x_J^i-x_{i-1})\frac{\mathrm{d}+}{\mathrm{d}x}v^0(x_{i-1}),
\\\inf_{x\in(-\infty,x_i)}v^0(x)-\sum_{k=1}^{i-1}(v^0(x_k-)-v^0(x_k+)) \\-\sum_{k=3}^{i} \Big((x_J^{k-1}-x_{k-2})\abs{\frac{\mathrm{d}+}{\mathrm{d}x}v^0(x_{k-2})}\Big)\Big\}\Big\}\Bigg\},
\end{multline}
where $\frac{\mathrm{d}\pm}{\mathrm{d}x}v^0$ are defined in \eqref{define_sided_deriv}.
Remark the term  $(x_J^i-x_{i-1})\frac{\mathrm{d}+}{\mathrm{d}x}v^0(x_{i-1})$ from \eqref{middle_case} drops out because $\frac{\mathrm{d}+}{\mathrm{d}x}v^0(x_{i-1})=0$. Similarly, remark that the term $(x_I^i-x_i) \frac{\mathrm{d}-}{\mathrm{d}x}v^0(x_{i})$ from \eqref{middle_case} drops out because $\frac{\mathrm{d}-}{\mathrm{d}x}v^0(x_{i})=0$.

Notice that if there is no numerical error in $\hat P$ and $\hat \Lambda$, then $\hat v_i\equiv v_i$ (for a schematic diagram of the $v_i$, see \Cref{ext_fig} below).

To construct the other $\hat v_j$  corresponding to rapidly decreasing pieces of $\bar{u}^0$, without doing any more numerical simulations, simply remark that we can translate $\hat P$ and $\hat\Lambda$ in space (to match the corresponding $v_j$ of interest) and again get numerical solutions to \eqref{system-numerical} by applying a construction  similar to \eqref{numerical_small}.

Remark that if $\hat v_i$ and $\hat v_{i+1}$ are both constructed using $\hat P$ and $\hat \Lambda$ (following \eqref{numerical_small} for $\hat v_i$ and the analogous construction for $\hat v_{i+1}$), then as long as  $\hat P$ and $\hat \Lambda$  are strictly increasing, we automatically have $ \hat v_i(x,t) > \hat v_{i+1}(x,t)$ for all $(x,t)$. Remark that the strict monotonicity of $\hat P$ and $\hat \Lambda$ can be checked a posteriori. Note also that smooth exact solutions with strictly increasing initial data will be strictly increasing (in $x$) for all time -- this follows from (the proof) of \Cref{control_lipschitz_lem} (in particular, see \eqref{derivubar}).

On the other hand, if $\hat v_i$ is constructed following \eqref{numerical_small} while $\hat v_{i+1}$ (or $\hat v_{i-1}$) corresponds to a nearly non-decreasing piece of $\bar{u}^0$, then the ordering condition $\hat v_i(x,t) > \hat v_{i+1}(x,t)$ (or $\hat v_{i-1}(x,t) > \hat v_{i}(x,t)$) for all $(x,t)$ can be checked a posteriori. See also \Cref{ordering_remark}.

    In the numerical construction of the $\hat u$ in the computer, we simply take $\hat{v}_i$ to be globally constant with value $v^0(\frac{x_{i-1}+x_{i}}{2})$ (recall, for intervals $(x_{i-1}, x_{i})\in  \mathcal{I}_{rd} $, we have that $\hat u^0|_{(x_{i-1}, x_{i})}$  is constant). However, we continue to use $\hat{v}_i$ as defined in \eqref{numerical_small} for the construction of $\hat\psi$ below in \Cref{constructpsihat}. This is justified by the following consideration.
    
    If the Rankine-Hugoniot ODEs, used to glue the various $\hat{v}_i$ together to make $\hat{u}$, are solved with decent accuracy, then the construction of $\hat{u}$ is oblivious to whether $\hat{v}_i$ is a numerically simulated version of the globally-defined extensions from \Cref{ext_section} or we simply take $\hat{v}_i$ to be globally constant. One way to see this is to remark that in the case when there is no numerical error in the $\hat{v}_i$ (so $\hat v_i=v_i$ for all $i$) and no numerical error in $\hat{u}$, and they are thus exact solutions to \eqref{system}, then any part of the functions $\hat v_i$ not revealed in $\hat{u}$ at $t=0$ will  never be revealed for all $t>0$. This itself can be seen in the following way. At time $t$, consider a shock at position $\hat{h}_i$ with $v_\ell$ as the left-hand state and $v_r$ as the right-hand state. Note that the speed of the characteristic of the function $v_\ell$ going through the point $(\hat{h}_i(t),t)$ is travelling faster than the Rankine-Hugoniot speed of the shock $(v_\ell (\hat h_i(t),t),v_r (\hat h_i(t),t))$, while  the speed of the characteristic of the function $v_r$ going through the point $(\hat h_i(t),t)$ is travelling slower. Thus, artificial extensions are never revealed. Remark here that $v_\ell > v_r$ pointwise.
    
If numerical errors are present in solving the Rankine-Hugoniot ODEs, these errors will be of the order $(\Delta t)^q$ where $q$ is the convergence rate of the numerical ODE solver (see \Cref{constructuhat} for more details). { Note that the (absolute) differences between the velocities of the characteristics through the point $(\hat{h}_i(t),t)$ and the shock speed $\sigma(v_\ell (\hat h_i(t),t),v_r (\hat h_i(t),t))$ are  bounded from above (for all $t\in[0,T]$) by 
\begin{align*}
    \tfrac{\delta \varepsilon \inf A''}{4(1+ \mathfrak{A} MT)},
\end{align*}
which follows from \eqref{ordering_vs} and \Cref{gap_lemma}, with $\mathfrak{A}\coloneqq \sup A''$. Moreover,  we choose $\delta$ to be approximately $h^{1/2}$, so that any  reasonable ODE solver  will preserve the ordering of velocities if $\Delta t$ is chosen small enough -- note that the CFL condition implies $\Delta t \lesssim h$.}
Whether this is the case can be verified in  an a posteriori fashion if a posteriori error control for the ODE solver is implemented. The cost for using a high order Runge-Kutta scheme for solving the Rankine-Hugoniot ODEs is {negligible} compared to the other costs of our numerical scheme for $\hat u$.


We remark that these arguments about when the artificial extensions of the $v_i^0$ are actually revealed will be used again in \Cref{sec:cdp}.

\begin{figure}[tb]
      \includegraphics[width=.6\textwidth]{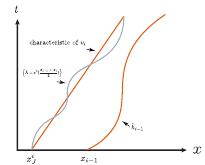}
  \caption{To ensure that our analysis is applicable to a function $\hat u$ that is defined using $\hat{v}_i$ globally constant (as opposed to the global extensions from \Cref{ext_section}), we need to ensure the level set of $\hat\Lambda$ is not too far away from the corresponding characteristic of $v_i$ (and similarly with $\hat P$).}\label{level_set_fig}
\end{figure}

Finally, to ensure that in the construction of the $\hat u$ in the computer, we can simply take $\hat{v}_i$ to be globally constant with value $v^0(\frac{x_{i-1}+x_{i}}{2})$, we need to ensure that at each time $t$ the construction given in \eqref{numerical_small} will never give values for $\hat{v}_i(\cdot,t)$ different from $v^0(\frac{x_{i-1}+x_{i}}{2})$ on an open set containing the interval $[\hat h_{i-1}(t),\hat h_i(t)]$. This follows if the level set (in space-time) $\Big\{\hat\Lambda=v^0(\frac{x_{i-1}+x_{i}}{2})\Big\}$ is close to the characteristic of the function $v_i$ emanating from the point $x_J^i$.
{Note that the position error of the level set can be bounded by the $L^\infty$ error $\norm{\Lambda-\hat\Lambda}_{L^\infty}$ (which can be controlled by \Cref{l_infty_control_lemma}) divided by the minimal slope of $\hat \Lambda$. For a first order finite volume scheme this $L^\infty$ error estimator is expected to be $\mathcal{O}(h^{2/3})$.} 
Remark also that (in the context of \eqref{middle_case}), 
\begin{align}
    x_{i-1}-x_J^i> \tfrac{\varepsilon \delta}{2M} \qquad \text{and} \qquad x_I^i-x_i> \tfrac{\varepsilon \delta}{2M},
\end{align}
and this gap between the characteristic of $v_i$ and the shock $\hat h_{i-1}$ cannot shrink in time (as discussed above, if the ODE solver error is small enough, this gap  is even growing linearly in time). 
Thus, by using a fine enough mesh width, we conclude by considering the numerical residual $\partial_t \hat \Lambda + \partial_x A(\hat \Lambda)$ a posteriori and invoking \Cref{l_infty_control_lemma}. See \Cref{level_set_fig}.

\subsection{Construction of $\hat \psi$}\label{constructpsihat}

The function ${\hat{\psi}}$ is defined similarly. The only difference between ${\hat{\psi}}$ and $\hat{u}$ is that the curves $\hat{h}_i$ (generalized characteristics for $\hat{u}$) are replaced by curves $h_i$ which are generalized characteristics for the exact solution $\bar{u}$. More precisely, until the first time $t_1$  that two of the $h_i$ collide, they are defined as the solution to (in the sense of Filippov)

\begin{align}
    \begin{cases}\label{define_h_exact_gen_char}
    \dot{{h}_i}(t)=\sigma(\bar{u}({h}_i(t)+,t),\bar{u}({h}_i(t)-,t)) \text{ for } t>0,\\
    {h}_i(0)=x_i.
    \end{cases}
\end{align}

Then we define for $0\leq t\leq t_1$,
\begin{align}
{\hat{\psi}}(x,t)\coloneqq
    \begin{cases}
    \hat{v}_1(x,t) &\text{if } x<{h}_1(t),\\
    \hat{v}_2(x,t) &\text{if } {h}_1(t)<x<{h}_2(t),\\
    &\vdots\\
    \hat{v}_{N+1}(x,t) &\text{if } {h}_N(t)<x.\\
    \end{cases}
\end{align}
Remark that in the definition of ${\hat{\psi}}$ we still use the numerical solutions $\hat{v}_i$. Further, $\hat{h}_i(0)=h_i(0)$ for all $i$, and thus $\hat{u}(\cdot,0)=\hat\psi(\cdot,0)$.

Furthermore, remark that instead of defining the various $h_i$ using \eqref{define_h_exact_gen_char}, we could instead define each  $h_i$ as the solution to

\begin{align}\label{define_h_exact}
    \begin{cases}
    \dot{{h}_i}(t)=\sigma({v}_i({h}_i(t),t),{v}_{i+1}({h}_i(t),t)) \text{ for } t>0,\\
    {h}_i(0)=x_i,
    \end{cases}
\end{align}
and use a restarting procedure (as above for $\hat{u}$) when two of the $h_i$ curves collide. This follows immediately from the proof of the main result in \cite{2017arXiv170905610K} (as discussed above in \Cref{sec:uniqueshifts} and \Cref{sec:dssc}).

Both of these ways (\eqref{define_h_exact_gen_char} and \eqref{define_h_exact}) of defining the $h_i$ are equivalent. Note that \eqref{define_h_exact_gen_char} is the definition of a generalized characteristic for the solution $\bar{u}$. The existence of generalized characteristics is well-known when the solution is $BV$. However, when the solution is only known to verify the Strong Trace Property (\Cref{strong_trace_definition}), one possible reference for the existence of generalized characteristics is \cite[Lemma 5.2]{scalar_move_entire_solution}.  

Thus, the function ${\hat{\psi}}$ has discontinuities which follow generalized characteristics for the exact solution $\bar{u}$, but with the functions in between the discontinuities being the functions from the numerical solver, and not the exact solution itself.

\section{Algorithm for controlling shifts }\label{algo_section}
We use different arguments to control shifts of different shocks (and derive estimates on $\Delta_i$). In particular, those ``large'' shocks in $\hat u$ (with size independent of $\delta$) that correspond to discontinuities in $\bar u^0$ will be treated differently from those shocks of size proportional to $\delta$ (at time $t=0$) that arise from discretizing rapidly decreasing parts of $\bar u^0$.

\subsection{Controlling $\Delta_i$ of ``large'' shocks}\label{sec:large}
To compute the $\Delta_i(t)$, the estimate on
\begin{align*}
    \abs{\hat{h}_i(t)-h_i(t)},
\end{align*} alluded to in the Main Theorem (\Cref{main_theorem}), we use the following procedure.

 Let $\hat{L}_i(t)\in\mathbb{N}$ and $\hat{R}_i(t)\in\mathbb{N}$ denote the integers corresponding to the $\hat{v}_k$ which are to the left and right, respectively, of the discontinuity $\hat{h}_i$ in $\hat{u}$ at time $t$.  Thus, $\hat v_{\hat{L}_i(t)}$ is to the left of $\hat h_i$ and $\hat v_{\hat{R}_i(t)}$ is to the right of $\hat h_i$. Likewise, let $L_i(t)\in\mathbb{N}$ and $R_i(t)\in\mathbb{N}$ denote the integers corresponding to the $\hat{v}_k$ which are to the left and right, respectively, of the discontinuity ${h}_i$ in ${\hat{\psi}}$ at time $t$. Thus, $\hat v_{{L}_i(t)}$ is to the left of $h_i$ and $\hat v_{{R}_i(t)}$ is to the right of $h_i$.

An upper bound on $\abs{\hat{h}_i(t)-h_i(t)}$ is then given by,
\begin{equation}\label{def_control_shift}
\Delta_i(t)\leq
A_i + B_i  + C_i,
\end{equation} 
where $A_i, B_i, C_i$ are defined in \eqref{parts_control_shifts}, \eqref{need_supremum_of_this}, and \eqref{need_supremum_of_this_too}:

\begin{equation}\label{parts_control_shifts}
A_i := \int\limits_0^t 
\zeta(t,s)\abs{\dot{\hat{h}}_i(s)-\sigma(\hat{v}_{\hat{L}_i(s)}(\hat{h}_i(s),s),\hat{v}_{\hat{R}_i(s)}(\hat{h}_i(s),s))}\, ds
\end{equation}
\begin{multline}\label{need_supremum_of_this}
B_i := \int\limits_0^t
\zeta(t,s)\abs{\sigma(\hat{v}_{\hat{L}_i(s)}(\hat{h}_i(s),s),\hat{v}_{\hat{R}_i(s)}(\hat{h}_i(s),s)) -\sigma(\hat{v}_{{L}_i(s)}(\hat{h}_i(s),s),\hat{v}_{{R}_i(s)}(\hat{h}_i(s),s))} \, ds
\end{multline}
and
\begin{equation}\label{need_supremum_of_this_too}
C_i := \norm{\frac{
\zeta(t,\cdot)}{\sqrt{\mathfrak{s}_i(\cdot)}} }_{L^2(0,t)}\cdot
\Bigg[ Ce^{C t }\Bigg(\int\limits_{-S}^{S}\eta(\bar{u}(x,0)|\hat{\psi}(x,0))\,dx 
+\mathcal{R}(t)\Bigg)
\Bigg]^{\frac{1}{2}},
\end{equation}
where $\mathcal{R}$ is from \eqref{def:cR}, $\mathfrak{s}_i\colon[0,T]\to\mathbb{R}$ is any lower bound on $\abs{\hat\psi(h_i(t)-,t)-\hat\psi(h_i(t)+,t)}$ and with
\begin{equation}\label{zeta_def}
    \zeta(t,s):= \exp\Bigg(\int_s^t\mathfrak{A}\max_{i\in\{1,\ldots,N+1\}}\text{Lip}[\hat{v}_i(r,\cdot)]\,dr\Bigg).
\end{equation}

The proof of \eqref{def_control_shift} is postponed to \Cref{Revelation_section} (in particular, see \Cref{sec:def_control_shift}). 

Let us briefly comment on the sources of errors represented by $A_i, B_i, C_i$. The term $A_i$ results from not solving the ODE for the position of the discontinuity in $\hat u$, i.e. \eqref{ODEfor_u_hat}, exactly. It can be made small by using a high order Runge-Kutta scheme for solving \eqref{ODEfor_u_hat} independent of other sources of error. The term $B_i$ reflects errors due to uncertainty which solutions $\hat v_i$ furnish the right and left-hand states of the discontinuity in $\hat\psi$ (and the right and left-hand states of the corresponding shock in $\hat u$ may very well be different).
For ``large'' shocks, the term $B_i$ is expected to be zero most of the time but of order one in those (short) periods of time in which there is uncertainty about which collisions have or have not taken place in $\hat \psi$ and thus about which $\hat v_i$ provide left and right states. 
It is expected that under mesh refinement in the numerical scheme the  lengths of time intervals of uncertainty go to zero and as a consequence $B_i$ will go to zero.
The term $C_i$ results from the shocks in $\hat \psi$ not moving with any Rankine-Hugoniot speed according to the $\hat v_i$ but following generalized characteristics of $\bar u$. It is this term where the dissipation estimate Lemma \ref{neg_entropy_diss} is used.

 To use the formula \eqref{def_control_shift}, we need to calculate an upper bound of $B_i$
where the upper bound will depend on  all the possible $L_i(t)$ and $R_i(t)$ (based on possible collisions between the $h_i$ we determine from the algorithm given in this section). This means this quantity, as well as $C_i$, depend on the various $\Delta_j$. 
Thus, the upper bound on $\Delta_i$ depends on $\Delta_i$. To overcome this, we employ the following algorithm.

Recall first that if two characteristics touch, then they are glued together, such that if there is some time $t^*$ such that for some $i$ and $j$, $\hat{h}_i(t^*)=\hat{h}_j(t^*)$ (or ${h}_i(t^*)={h}_j(t^*)$), then for all $t>t^*$, $\hat{h}_i(t)=\hat{h}_j(t)$ (respectively, ${h}_i(t)={h}_j(t)$). See \Cref{construction_section} for more details on this.

Then, control on
the $\Delta_i(t)$ comes from employing the following algorithm:

\subsection{Algorithm for bounding \eqref{need_supremum_of_this} and 
\eqref{need_supremum_of_this_too}} \label{section_frak}
\begin{enumerate}
\item\label{step1} At each time step, for each $i$, use the fact that there is a uniform upper bound on the $\dot{h}_i$ (and $\dot{\hat{h}}_i$) to estimate in a very rough way the largest possible interval where $h_i$ might be. Let us call this interval $I$. 

We work inductively, and based on our knowledge of previous collisions which must have occurred, let $\tilde{L}_i(t)\in\mathbb{N}$ denote an upper bound on the integer $k$ such that $\hat{v}_k$ is on the left of $h_i$ in the function ${\hat{\psi}}$ at time $t$. 

Similarly, let $\tilde{R}_i(t)\in\mathbb{N}$ denote a lower bound on the integer $k$ such that $\hat{v}_k$ is on the right of $h_i$ in the function ${\hat{\psi}}$ at the time $t$. Remark that $\tilde{L}_i(t)\leq i$ and $\tilde{R}_i(t)\geq i+1$.

Then, within the rough interval $I$ where $h_i$ might live, calculate a minimal shock size
\begin{align}\label{inf_I}
\mathfrak{s}_i\coloneqq\inf_{x\in I} [\hat{v}_{\tilde L_i(t)}(x,t)-\hat{v}_{\tilde R_i(t)}(x,t)].
\end{align}

This then gives a preliminary function $\mathfrak{s}_i$ for use in $\Delta_i(t)$. Then, $\Delta_i(t)$ will give an  interval (around  $\hat{h}_i$) of possible positions of $h_i$ which might be narrower  than the roughly calculated  interval $I$. Using this improved interval, the infimum in \eqref{inf_I} can be recalculated over this smaller interval, and thus the $\mathfrak{s}_i$ will increase, which in turn provides for a narrower possible interval for $h_i$. Inductively repeating this process gives a pointwise non-decreasing sequence of $\mathfrak{s}_i$'s bounded below by zero. Thus, the sequence of $\mathfrak{s}_i$'s will have a limit. One can in practice iteratively calculate $\mathfrak{s}_i$ until major increases in $\mathfrak{s}_i$ stop. Remark that if, due to shock interaction, $\hat{v}_{\tilde L_i(t)}$ and $\hat{v}_{\tilde R_i(t)}$ are not to the left and right, respectively, of $h_i$ in the function ${\hat{\psi}}$ at time $t$, then due to the ordering of the $\hat{v}_i$ (see \eqref{ordering_vs}), the gap ${\hat{\psi}}(h_i(t)-,t)-{\hat{\psi}}(h_i(t)+,t)$ might be larger but not smaller than the $\mathfrak{s}_i$ calculated using the above iteration, i.e. we compute an upper bound $C_i$ that is larger than it would need to be.

\item With the $\mathfrak{s}_i$  from step (\ref{step1}),  at each fixed time $t$, use the current location of $\hat{h}_i(t)$ and the formula for $\Delta_i(t)$ to calculate the interval of possible positions of $h_i$.

After each time step, the formula for $\Delta_i(t)$ will give a larger interval of possible positions of $h_i$ (due to
$\mathcal{R}$ growing). Use the error intervals around each $\hat{h}_i$ to determine which curves of discontinuity $h_i$ in ${\hat{\psi}}$ might have touched (and hence, merged). For each $i$, let $\hat{L}_i(t)\in\mathbb{N}$ and $\hat{R}_i(t)\in\mathbb{N}$ denote the integers such that  $\hat{v}_{\hat{L}_i(t)}$ and $\hat{v}_{\hat{R}_i(t)}$  are to the left and right, respectively, of the discontinuity $\hat{h}_i$ in $\hat{u}$ at time $t$.  Likewise, let $L_i(t)\in\mathbb{N}$ and $R_i(t)\in\mathbb{N}$ denote the integers such that $\hat{v}_{L_i(t)}$ and $\hat{v}_{R_i(t)}$ are to the left and right, respectively, of the discontinuity ${h}_i$ in ${\hat{\psi}}$ at time $t$.   
Note that there are situations where due to ambiguity about which shifts $h_i$ have merged the numbers $L_i, R_i$ are not known but usually the set of possible pairs $(L_i, R_i)$ is small (the set of possible pairs can be reduced using the analysis in \Cref{sec:when_collided} below). 

Then, at each time $t$, calculate 
\begin{multline}
    \label{need_supremum_of_this2}
\max_{\{ (L_i,  R_i)\}   }\Big[\sigma\big(\hat{v}_{\hat{L}_i(t)}(\hat{h}_i(t)+,t),\hat{v}_{\hat{R}_i(t)}(\hat{h}_i(t)-,t)\big)
\\
-\sigma\big(\hat{v}_{{L}_i(t)}(\hat{h}_i(t)+,t),\hat{v}_{{R}_i(t)}(\hat{h}_i(t)-,t)\big)\Big],
\end{multline}
where the maximum is taken over the set of possible pairs $L_i(t), R_i(t)$ (based on possible collisions between the $h_i$). Remark that a worst-case bound for the $\abs{\sigma(\ldots)-\sigma(\ldots)}$ term in \eqref{need_supremum_of_this} is simply two times the supersonic speed, $2\sup A'$. 

\end{enumerate}

\subsection{Determining when shocks have collided  in $\hat \psi$}\label{sec:when_collided}

When two curves of discontinuity, corresponding to either ``large'' shocks or ``boundary'' shocks (see \Cref{shocks_fig}) in the numerical solution, $\hat{h}_i$ and $\hat{h}_j$ (for $i<j$) collide (and merge) at a time $t$, we would like to assert whether the corresponding $h_i$ and $h_j$ have collided (and merged) in  the function ${\hat{\psi}}$.

\subsubsection{Collisions involving at least one ``large'' shock}\label{sec:twolargeshocks}
 Assume that at least one of the shocks $\hat{h}_i$ or $\hat{h}_j$ are ``large.'' There are certain scenarios where we can be sure that ${h}_i$ and ${h}_j$ have collided. This can be seen as follows:

We work inductively, and based on our knowledge of previous collisions which must have occurred, let $\tilde{L}_i(t)\in\mathbb{N}$ denote an upper bound on the integer $k$ such that $\hat{v}_k$ is on the left of $h_i$ in the function ${\hat{\psi}}$ at time $t$. The lower our estimate on $\tilde{L}_i(t)$, the better. 

Similarly, let $\tilde{R}_j(t)\in\mathbb{N}$ denote a lower bound on the integer $k$ such that $\hat{v}_k$ is on the right of $h_j$ in the function ${\hat{\psi}}$ at the time $t$. The higher our estimate on $\tilde{R}_j(t)$, the better.

Let  $\hat{R}_i(t-)\in\mathbb{N}$ denote the integer such that the $\hat{v}_{\hat{R}_i(t-)}$ is to the right of the discontinuity $\hat{h}_i$ in $\hat{u}$ just before the collision at time $t$.  Similarly, let  $\hat{L}_j(t-)\in\mathbb{N}$ denote the integer such that the $\hat{v}_{\hat{L}_j(t-)}$ is to the left of the discontinuity $\hat{h}_j$ in $\hat{u}$ just before the collision at time $t$. Remark that $\hat{R}_i(t-)=\hat{L}_j(t-)$. Then, we consider the solutions to the following initial value problem

\begin{align}\label{frak_curves}
\begin{cases}
\mathfrak{h}_{\tilde{L}_i(t),\hat{R}_i(t-)}(t)=\hat{h}_i(t)=\hat{h}_j(t)\\
\dot{\mathfrak{h}}_{\tilde{L}_i(t),\hat{R}_i(t-)}(s)=\sigma(\hat{v}_{\tilde{L}_i(t)}(\mathfrak{h}_{\tilde{L}_i(t),\hat{R}_i(t-)}(s),s),\hat{v}_{\hat{R}_i(t-)}(\mathfrak{h}_{\tilde{L}_i(t),\hat{R}_i(t-)}(s),s) \mbox{ for $s>t$},
\end{cases}
\end{align}
and similarly for $\mathfrak{h}_{\hat{L}_j(t-),\tilde{R}_j(t)}$.

Roughly speaking, we are continuing the curves in $\hat{u}$ as if they did not collide at time $t$.

We argue by contradiction: we assume the two curves $h_i$ and $h_j$ \emph{have not} merged in ${\hat{\psi}}$. 

More precisely, we make the following assumption on the function $\hat\psi$. We assume that  for all $s>t$ the following equalities hold for the $L_j, R_j,L_i$ and $R_i$:
\begin{equation}\label{assume1010}
\aligned
L_j(s)&=\hat{L}_j(t-),\\
R_j(s)&=\tilde{R}_j(t),\\
L_i(s)&=\tilde{L}_i(t),\\
 R_i(s)&=\hat{R}_i(t-).
 \endaligned
\end{equation}

We then use the control on the shifts (see \eqref{def_control_shift} for ``large'' shocks and \Cref{sec:bdyshocks} for ``boundary'' shocks) to estimate  $\abs{\mathfrak{h}_{\tilde{L}_i(t),\hat{R}_i(t-)}-h_i}$ and  $\abs{\mathfrak{h}_{\hat{L}_j(t-),\tilde{R}_j(t)}-h_j}$. Under the assumptions \eqref{assume1010}, we know that the quantity \eqref{need_supremum_of_this} is zero when we apply the estimate \eqref{def_control_shift}.

If the two curves $\mathfrak{h}_{\tilde{L}_i(t),\hat{R}_i(t-)}$ and $\mathfrak{h}_{\hat{L}_j(t-),\tilde{R}_j(t)}$ have a distance large enough apart, i.e.  if there is a time when the two error intervals around $\mathfrak{h}_{\tilde{L}_i(t),\hat{R}_i(t-)}$ and $\mathfrak{h}_{\hat{L}_j(t-),\tilde{R}_j(t)}$   no longer intersect (and $\mathfrak{h}_{\tilde{L}_i(t),\hat{R}_i(t-)}>\mathfrak{h}_{\hat{L}_j(t-),\tilde{R}_j(t)}$),
then we can conclude that $h_i$ and $h_j$ must have merged in the function ${\hat{\psi}}$ before this time. See \Cref{frak_curves_fig}.

\begin{figure}[tb]
      \includegraphics[width=\textwidth]{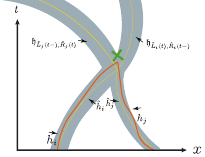}
  \caption{An illustration showing $h_i,h_j,\hat{h}_i,\hat{h}_j,\mathfrak{h}_{\tilde{L}_i(t),\hat{R}_i(t-)}$, and $\mathfrak{h}_{\hat{L}_j(t-),\tilde{R}_j(t)}$. We also show the error interval around $\hat{h}_i$ and $\mathfrak{h}_{\tilde{L}_i(t),\hat{R}_i(t-)}$ which contains $h_i$, as well as the error interval around $\hat{h}_j$ and $\mathfrak{h}_{\hat{L}_j(t-),\tilde{R}_j(t)}$ which contains $h_j$. Note the green X marks where the two error intervals stop overlapping, and thus the first time we can conclude that the $h_i$ and $h_j$ must have merged.}\label{frak_curves_fig}
\end{figure}

If $\tilde{L}_i(t)>L_i(t)$ and/or $\tilde{R}_j(t)<R_j(t)$, this simply means we are underestimating the speed at which the two curves $\mathfrak{h}_{\tilde{L}_i(t),\hat{R}_i(t-)}$ and $\mathfrak{h}_{\hat{L}_j(t-),\tilde{R}_j(t)}$ are pulling apart, and due to this underestimate we determine at a later time that $h_i$ and $h_j$ have merged. This is due to the definition of the $\tilde{L}_i$, $\hat{R}_i$, $\hat{L}_j$, $\tilde{R}_j$, and \eqref{ordering_vs}. Remark also that $\partial_u\sigma(u,v)=\partial_v\sigma(u,v)>0$. 

Similarly, due to running this algorithm on other pairs of shock curves, at some later time $s>t$ we might be able to deduce a collision has occurred between other shock curves in ${\hat{\psi}}$, leading to an improved estimate on either $\tilde{L}_i(s)$ and/or $\tilde{R}_j(s)$. If at this point in time $s$ we have not yet determined if the $h_i$ and $h_j$ have merged, we can use these improved estimate(s) to increase the speed at which the two curves $\mathfrak{h}_{\tilde{L}_i(t),\hat{R}_i(t-)}$ and $\mathfrak{h}_{\hat{L}_j(t-),\tilde{R}_j(t)}$ pull apart. The increase in speed shall start at time $s$.

\subsubsection{Collisions between two ``boundary'' shocks separated  by a nearly non-decreasing piece}\label{nnd_separated}

\begin{figure}[tb]
      \includegraphics[width=.7\textwidth]{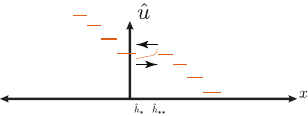}
  \caption{The two ``boundary'' shocks, here denoted by $\hat{h}_{*}$ and $\hat{h}_{**}$, are separated by a Lipschitz-continuous piece (originating from a nearly non-decreasing part of $\bar{u}^0$), and are about to collide in $\hat{u}$.}\label{twobdycollide2_fig}
\end{figure}

Consider two ``boundary'' shocks $\hat{h}_{*}(t)<\hat{h}_{**}(t)$  which are separated by a Lipschitz-continuous piece between them (corresponding to a nearly non-decreasing piece in $\bar{u}^0$ at time $t=0$). Assume that $\hat{h}_{*}$ and $\hat{h}_{**}$ collide in $\hat{u}$ at time $t_1$. Then we want to know when the two corresponding ``boundary'' shocks ${h}_{*}$ and ${h}_{**}$ have also collided in $\hat\psi$ (see \Cref{twobdycollide2_fig}). Let $(x_i,x_{i+1})$ be the nearly non-decreasing interval in $\bar{u}^0$ at time $t=0$ corresponding to the Lipschitz-continuous piece which is between the two ``boundary'' shocks.  The idea is the same as in \Cref{sec:twolargeshocks} above: in the computer, after the time of the collision, continue both of the two ``boundary'' shocks forward in time \emph{as if they did not collide} (so, with the Lipschitz-continuous piece to the left for one of the ``boundary'' shocks, and to the right for the other). Then, under the assumption that the two ``boundary'' shocks also did not collide in $\hat\psi$, use the error intervals from \Cref{sec:bdyshocks}. At the time when the two error intervals have pulled apart and no longer overlap (as in \Cref{frak_curves_fig}), we can conclude by contradiction that the two boundary curves must have collided in $\hat\psi$. Remark that there must be a time $t_0\in[0,t_1]$ such that $\dot{\hat{h}}_{*}(t_0)-\dot{\hat{h}}_{**}(t_0)\geq \frac{x_{i+1}-x_i}{T}$ (otherwise, the two ``boundary'' shocks $\hat{h}_{*}$ and $\hat{h}_{**}$ would never touch). Thus, from the moment the two error intervals around $\hat{h}_{*}$ and $\hat{h}_{**}$ intersect, the length of time until we can determine that ${h}_{*}$ and ${h}_{**}$ have  collided in $\hat\psi$ will be proportional to the sum of the length of the two error intervals.

\subsubsection{Collision between the left and right ``boundary'' shocks of a rapidly decreasing piece}\label{left_right_bdy}

\begin{figure}[tb]
      \includegraphics[width=.7\textwidth]{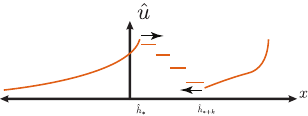}
  \caption{The two ``boundary'' shocks, here denoted by $\hat h_{\ast }$ and $\hat h_{\ast +k}$, are the left and right boundary of a region with ``front tracking'' shocks and they are about to collide in $\hat{u}$.}\label{twobdycollide_fig}
\end{figure}

\hspace{.1in}

The procedure explained above in \Cref{sec:twolargeshocks} for determining whether a collision has to have taken place in $\hat\psi$ works well when at least one ``large'' shock is involved. 

If, however, we consider a possible collision between two curves $h_\ast$ and $h_{\ast+k}$  that are the left and right boundary of a region with ``front tracking'' shocks (corresponding to a rapidly decreasing piece of $\bar{u}^0$, see \Cref{twobdycollide_fig}), then, if we look at the possible speeds  of $h_\ast$ and $h_{\ast+k}$, separately, the lowest possible speed of $h_\ast$ and the largest possible speed of $h_{\ast+k}$ may be very close to each other. However, these largest and smallest values cannot be obtained simultaneously in practice.
If $h_\ast$ and $h_{\ast+k}$ have not collided then $h_\ast$ can only have collided with ``front tracking'' shocks that $h_{\ast+k}$ has not collided with -- indeed the right trace of $h_\ast$ is larger or equal than the left trace of $h_{\ast+k}$. Thus, the minimal Rankine-Hugoniot speed difference is
\begin{align}\label{min_diff}
   m\coloneqq \inf_{(w,v,u)} 
\sigma (w,v) - \sigma(v, u),
\end{align}

where the infimum is over the set 
\begin{align}
\Bigg\{(w,v,u)\in\mathbb{R}^3\colon \abs{w},\abs{v},\abs{u}\leq\max\{\norm{v_\ast}_{L^\infty(K)},\norm{v_{\ast+k+1}}_{L^\infty(K)}\} \text{, } w-u>\rho \text{, } v\in[w,u] \Bigg\},
\end{align}

and where $\rho>0$ is chosen such that $v_{\ast}(x,t)-v_{\ast+k+1}(x,t)>\rho$ on the set $K$. Recall (from \Cref{main_theorem}), $K:= \{(x,t) : -S + st < x < S - st \}$. Note that we have $m= \rho/2$ for Burgers equation

Let $t_{tch}$ be the time when $\hat h_{\ast }$ and $\hat h_{\ast +k}$ touch in the computer. Then, once $t> t_{tch}$ is so large that

\begin{align}\label{must_touch}
  m(t-t_{tch})> \Delta_\ast(t_{tch}) + \Delta_{\ast+k}(t_{tch}) +\Upsilon_\ast(t) + \Upsilon_{\ast + k}(t)
\end{align}
holds (where $m$ is from \eqref{min_diff}), we can be sure that the shocks $h_{\ast }$ and $h_{\ast +k}$ have collided. Note that the $\Upsilon$-terms in \eqref{must_touch} come from the fact that the shocks in $\hat\psi$ are not actually traveling with Rankine-Hugoniot speed but have an error controlled by \eqref{eq:velerr} below.

Remark here that, as above in \Cref{section_frak}, we are assuming that after the $\hat h_{\ast }$ and $\hat h_{\ast +k}$ touch in the computer, the $h_{\ast }$ and $h_{\ast +k}$ have not yet touched in $\hat \psi$. The formulas for $\Delta_\ast(t_{tch})$ and $\Delta_{\ast+k}(t_{tch})$ come from \Cref{sec:bdyshocks} and the assumption that the $h_{\ast }$ and $h_{\ast +k}$ have not yet touched in $\hat \psi$.

Then, we argue by contradiction: when the error intervals around $\hat h_{\ast }$ and $\hat h_{\ast +k}$ have pulled apart (as dictated by \eqref{must_touch} and as shown in \Cref{frak_curves_fig}), we can by contradiction conclude that $h_{\ast }$ and $h_{\ast +k}$ must have touched. Equation \eqref{must_touch} gives us an upper bound on the time when the error intervals must pull apart.  After we can be sure that $h_{\ast }$ and $h_{\ast +k}$ must have touched, we can treat the combined shock as a ``large'' shock using the techniques from \Cref{sec:large} and \Cref{sec:twolargeshocks}, but we start the Gronwall calculation (see \eqref{shift_gronwall_1027}) at the time when we are sure $h_{\ast }$ and $h_{\ast +k}$  have combined, and replace the $\abs{h_i(0)-\hat{h}_i(0)}$ term in \eqref{shift_gronwall_1027} with the position uncertainty at the time when we are sure $h_{\ast }$ and $h_{\ast +k}$ have combined.

\subsection{Improved control on ``front tracking'' shocks} \label{front_tracking_shocks}
{Let us begin by giving some scaling arguments, that suppose that the $L^2$ norm of the initial data approximation error is $\mathcal{O}(\sqrt{\delta ^2 + h^2})$ but not $o(\sqrt{\delta ^2 + h^2})$ and the $L^2$ norm of the residuals (see \eqref{system-numerical}) is $\mathcal{O}(h)$ but not $o(h)$. This is the case in our numerical experiments for first order schemes.
The estimates on uncertainty of shock positions made above (see \eqref{def_control_shift}) at best provide $\mathcal{O}(\frac{1}{\sqrt{\delta}}\sqrt{ \delta^2 + h^2}) \geq \mathcal{O}(\sqrt{\delta})$ upper bounds on position errors of shocks of size proportional to $ \delta$ (originating from  areas where $\bar u^0$ is rapidly decreasing).} Since these shocks are initially only $\delta$ apart there is essentially no control of the order in which their collisions happen and of the consequences of these collisions. Thus, a different methodology is needed in order to control the difference $\hat u - \hat \psi$ in these areas. This will be based on a combination of Bressan's results on wave front tracking solutions \cite[Eqn (8.22)]{Bressan2000} and an improved estimate for velocity errors.

\subsubsection{Improved velocity control}\label{sec:improvevelcontr}

To obtain better control on the velocity of any one of the small ``front tracking'' shocks we fix this shock, denote its position by $h_\star$,  and create another version of ${\hat{\psi}}$, called ${\hat{\psi}}_{\text{fine}}$. At time $t=0$, ${\hat{\psi}}_{\text{fine}}$ is also in the form of \eqref{LIWASe}, and ${\hat{\psi}}_{\text{fine}}(\cdot,0)$ is related to ${\hat{\psi}}(\cdot,0)$ in the following way: ${\hat{\psi}}_{\text{fine}}$ chooses a much finer piecewise-constant approximation (made up of extremely tiny shocks) in areas where $\bar{u}(\cdot,0)$ is rapidly decreasing. This is done in all rapidly decreasing parts \emph{except at the shock} $\hat{h}_\star$ (the size of this shock is the same in both ${\hat{\psi}}$ and ${\hat{\psi}}_{\text{fine}}$ -- let us call this shock the $\star$-th shock in both ${\hat{\psi}}$ and ${\hat{\psi}}_{\text{fine}}$). See \Cref{psi_fine_fig}.  On the domains of the nearly non-decreasing parts of $\bar u(\cdot,0)$, i.e., those with $\partial_x \bar{u}(\cdot,0) > - \epsilon$ we let $ {\hat{\psi}}_{\text{fine}}(\cdot,0)$ and 
$ {\hat{\psi}}(\cdot,0)$ agree.

For both ${\hat{\psi}}(\cdot,0)$ and ${\hat{\psi}}_{\text{fine}}(\cdot,0)$, on each interval where these functions are Lipschitz-continuous, we need to  extend the Lipschitz-continuous pieces to be functions on the whole real line. We extend each of these pieces following \Cref{ext_section}, giving extensions $v_i^0$. We evolve the $v_i^0$ corresponding to the nearly non-decreasing parts of $\bar{u}^0$ numerically (giving numerical solutions $\hat v_i$) 
 whereas we evolve the extended initial data $v_i$ corresponding to rapidly decreasing pieces following \Cref{do_not_sim_remark}.

By doing this, we get that
\begin{multline}\label{imrpoved633}
    \int_{\mathbb{R}} \eta(\bar{u}(\cdot,0)|\hat{\psi}_{\text{fine}}(\cdot,0)) \, dx 
    \\
    \leq 2\delta \mathfrak{H}\Big(\sum_{*\in{\{k\colon h_k(0)\text{ ``boundary'' shock, } \bar{u}^0(h_k(0)+)=\bar{u}^0(h_k(0)-)\}}}\big[\mathfrak{s}_*(0)^2 \big]+\mathfrak{s}_\star(0)^2\Big)
    \\+ \sum_{(x_i, x_{i+1}) \in \mathcal{I}_{nnd}}\int_{x_i}^{x_{i+1}} \eta(v_i^0(x)|\hat v_i(x,0)\,dx,
\end{multline}
 where $\mathfrak{H}:=\sup \eta''$, $\mathfrak{s}_\star(0)$ is the height of the shock at $\hat{h}_\star$ (at time $t=0$) and $\mathcal{I}_{nnd} $ is the set of nearly non-decreasing intervals. For the function $\hat\psi$, we have the ordering \eqref{ordering_main_thm}. In order to make sure the analogous ordering holds also with the function ${\hat{\psi}}_{\text{fine}}$, we may also need to keep the sizes of the ``front tracking'' shocks adjacent to ``boundary'' shocks in $\hat\psi(\cdot,0)$ the same size in ${\hat{\psi}}_{\text{fine}}(\cdot,0)$ as they are in $\hat\psi(\cdot,0)$ (see \Cref{psi_fine_fig}). This is where the term $\sum_{*\in{\{k\colon h_k(0)\text{ ``boundary'' shock, } \bar{u}^0(h_k(0)+)=\bar{u}^0(h_k(0)-)\}}}\big[\mathfrak{s}_*(0)^2 \big]$ in \eqref{imrpoved633} comes from.

\begin{figure}[tb]
      \includegraphics[width=\textwidth]{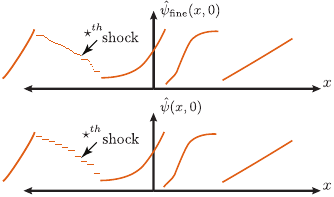}
  \caption{For each ``front tracking'' shock in ${\hat{\psi}}$ (and correspondingly in $\hat{u}$), we introduce a function ${\hat{\psi}}_{\text{fine}}$ which helps us study that particular shock.}\label{psi_fine_fig}
\end{figure}

The function $\mathfrak{s}_\star(t)>0$, which verifies the bound  $\hat{\psi}(h_\star(t)-,t)-\hat{\psi}(h_\star(t)+,t)\geq \mathfrak{s}_\star(t)$, can be chosen proportional to  $\delta$  for all $t\in[0,T]$ (in the case of exact solutions, this follows from \Cref{gap_lemma}, and we can check it a posteriori for the numerical solutions $\hat v_i$).

Then, applying the dissipation estimate \eqref{dissipation_formula_psi4} to the $\star$-th shock in  ${\hat{\psi}}_{\text{fine}}$ implies an (average) velocity error, i.e. the difference of the speed of the shock from its Rankine-Hugoniot speed, measured in the $L^2$-in-time norm, of 

\begin{multline}\label{dissipation_repeated}
\int\limits_{0}^{t} \Bigg[{c{\mathfrak{s}_\star}(r)}\big(\sigma(\bar{u}(h_\star(r)+,r),\bar{u}(h_\star(r)-,r))-\sigma({\hat{\psi}}(h_\star(r)+,r),{\hat{\psi}}(h_\star(r)-,r))\big)^2\Bigg]\,dr
\\
\leq
C\Bigg(\int\limits_{-S}^{S}\eta(\bar{u}(x,0)|\hat{\psi}_{fine}(x,0))\,dx +  \int\limits_{0}^{t}\int\limits_{-S+sr}^{S-sr}(R_{\hat{\psi}_{fine}}(x,r))^2\,dxdr\Bigg)e^{C t},
\end{multline}
where $c>0$ is defined in \eqref{def_c}.  The function $R_{\hat{\psi}_{fine}}$ is defined analogously to $R_{{\hat{\psi}}}$ (see \eqref{residual_psi}). The relative entropy  $\int \eta(\bar{u}(x,0)|\hat{\psi}(x,0))\,dx$ is expected to be $\mathcal{O}( \delta^3+h^2)$ but not $o( \delta^3+h^2)$ and the squared $L^2$ norm of the residual is expected to be $\mathcal{O}(h^\frac{3}{2})$ caused by the kinks in the extensions. Thus the optimal scaling is $\delta \sim h^{1/2}$ and the right-hand side of \eqref{dissipation_repeated} is $\mathcal{O}(\delta^3)= \mathcal{O}( h^{3/2})$.







The point is that because the shift functions for both ${\hat{\psi}}$ and ${\hat{\psi}}_{\text{fine}}$ are simply generalized characteristics of $\bar{u}$,  the position of the $\star$-th shock will be  \emph{the exact same} in both ${\hat{\psi}}$ and ${\hat{\psi}}_{\text{fine}}$ for all time.

\begin{figure}[tb]
      \includegraphics[width=\textwidth]{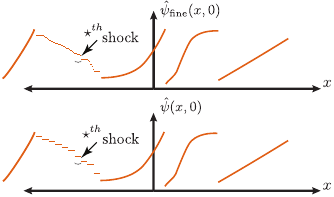}
  \caption{Multiple of the tiny shocks in ${\hat{\psi}}_{\text{fine}}$ (moving with velocity dictated by shift functions), such as indicated here by the bracket to the left of the $\star$-th shock, might interact before any interaction has occurred in ${\hat{\psi}}$. However, if enough of the tiny shocks interact with the $\star$-th shock in ${\hat{\psi}}_{\text{fine}}$, then necessarily one of the shocks to the left of the $\star$-th shock in ${\hat{\psi}}$ (indicated again by a bracket) must interact with the $\star$-th shock, bringing the Rankine-Hugoniot speeds of the $\star$-th shocks in ${\hat{\psi}}_{\text{fine}}$ and ${\hat{\psi}}$ close to parity.}\label{psi_fine_fig2}
\end{figure}

However, due to the  many tiny shocks in ${\hat{\psi}}_{\text{fine}}$ that are not in ${\hat{\psi}}$, some of these tiny shocks will hit and combine with the $\star$-th shock we are concerned with, changing its left or right state and thus its natural Rankine-Hugoniot speed. This means that the Rankine-Hugoniot speed of this shock in ${\hat{\psi}}_{\text{fine}}$ might be slightly different than the Rankine-Hugoniot speed of the same shock in ${\hat{\psi}}$. This will be problematic in \Cref{sec:cdp} since in Bressan's theory we need to control the error of the velocity compared to the natural Rankine-Hugoniot speed. %
However, this difference in Rankine-Hugoniot speeds between ${\hat{\psi}}_{\text{fine}}$ and ${\hat{\psi}}$ is moderated by the fact that 
the shocks in ${\hat{\psi}}$ and ${\hat{\psi}}_{\text{fine}}$ are both travelling with velocities given by  generalized characteristics of $\bar u$, i.e. only the tiny shocks in ${\hat{\psi}}_{\text{fine}}$ that start between the $\star-1$-th and $\star$-th shocks in ${\hat{\psi}}$ can combine with the $\star$-th shock (in ${\hat{\psi}}_{\text{fine}}$)  before the $\star-1$-th shock in ${\hat{\psi}}$ combines with it as well. See \Cref{psi_fine_fig2}. The same holds for shocks to the right and future collisions. 

All in all, we get that, the velocity error of the $\star$-th shock in $\hat \psi$ (in the $L^1(0,t)$ norm)
is bounded by,

\begin{multline}\label{eq:velerr}
    \int_0^t | \dot h_\star(s) - \sigma(v_{L_\star(s)}(h_\star(s),s), v_{R_\star(s)}(h_\star(s),s)) | \, ds\\ \leq 
 \frac{\sqrt{t}}{\min_t \sqrt{\mathfrak{s}_\star (t)} }\Bigg(2\mathfrak{H}\delta\Big(\mathfrak{s}_\star(0)^2 +\sum_{*\in{\{k\colon h_k(0)\text{ ``boundary'' shock, } \bar{u}^0(h_k(0)+)=\bar{u}^0(h_k(0)-)\}}}\big[\mathfrak{s}_*(0)^2 \big]\Big) 
 \\
 + \sum_{(x_i, x_{i+1}) \in \mathcal{I}_{nnd}}\int_{x_i}^{x_{i+1}} \eta(v_i^0(x)|\hat v_i(x,0))\,dx   +
  \int\limits_{0}^{t}\int\limits_{-S+sr}^{S-sr}(R_{\hat{\psi}_{fine}}(x,r))^2\,dxdr\Bigg)^{\frac12} e^{C t}\\+ t\bar{ \mathfrak{s}}\cdot\mathfrak{A}  =: \Upsilon_\star(t),\\
    \end{multline}
    where $\mathfrak{H}$ is an upper bound for $\eta''$, we recall that $\mathfrak{A}$ is an upper bound for the Lipschitz constant of Rankine-Hugoniot speeds (see \eqref{eq:RH}), and
    \begin{align}\label{def_frakbars}
\bar{ \mathfrak{s}}:= \sup_t\max_j \sup_{x\in\mathbb{R}}[\hat{v}_j(x,t)-\hat{v}_{j+1}(x,t)],
\end{align}
 where the maximum over $j$ is taken  over \emph{all} ``front tracking'' shocks $h_j$ in $\hat\psi(\cdot,0)$ (corresponding to all of the rapidly decreasing parts of $\bar u^0$).

 In the situation that we observe, i.e. with kinks in the extensions, the $L^2$norms of $R_{\hat{\psi}_{fine}}$ (and $R_{\hat{\psi}}$)  are  $\mathcal{O}(h^{3/4})$ but not $o(h^{3/4})$ (and $\delta$ is chosen as $h^{\frac{1}{2}}$) then \eqref{eq:velerr} provides an estimate for the uncertainty of shock position of order $\mathcal{O}(\delta)$.

Compare this with the discussion at the beginning of this section (\Cref{front_tracking_shocks}) on uncertainty of shock positions via using \eqref{def_control_shift} directly -- this only provides uncertainty of $\mathcal{O}(\sqrt{\delta})$.

\subsubsection{Control on the regions of $\hat u$ coming from rapidly decreasing parts}\label{sec:cdp}

\begin{figure}[tb]
      \includegraphics[width=.8\textwidth]{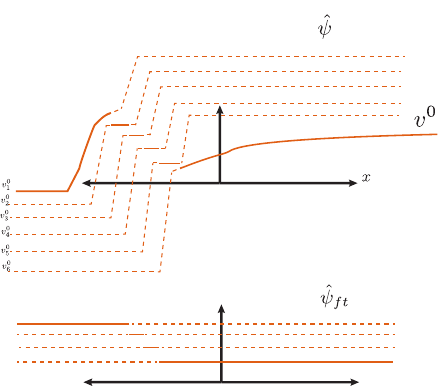}
  \caption{We define the function $\hat \psi_{ft}$, which, in contrast with $\hat\psi$, uses constant extensions. Here the extensions are shown as dotted lines.}\label{ft_fig}
\end{figure}

Let us focus on one interval 
\begin{align}\label{focus_interval}
[h_k(0) , h_{k+\ell}(0)]
\end{align}
where $\bar u^0$ is rapidly decreasing. We wish to use estimates from the front tracking technique but $\hat \psi$ is not a front tracking solution (due to its increasing parts). Thus, we define a function $\hat \psi_{ft}$ that can be understood as an approximate front tracking solution. 
We construct the piecewise-constant (in space) function $\hat \psi_{ft}$ from the same piece-wise constant approximation of $\bar u^0|_{[h_k(0) , h_{k+\ell}(0)]}$  as $\hat \psi$ but using globally constant extensions for each piece (instead of the extensions in \Cref{ext_section}). Furthermore, at time $t=0$, we define $\hat \psi_{ft}$ to be constant outside of $[h_k(0) , h_{k+\ell}(0)]$ with $\hat \psi_{ft}(x,0)=\hat \psi_{ft}(h_k(0)+,0)$ for $x<h_k(0)$ and $\hat \psi_{ft}(x,0)=\hat \psi_{ft}(h_{k+\ell}(0)-,0)$ for $x>h_{k+\ell}(0)$. See \Cref{ft_fig}.

Thus, for all time $t$, $\hat \psi_{ft}(\cdot,t)$ is piecewise-constant on $\mathbb{R}$. For each discontinuity in $\hat \psi_{ft}$, the space-time 
 curves of discontinuity in $\hat \psi_{ft}$ are  the same  as for the corresponding shock in $\hat \psi$ (the movement of the shocks is dictated by the same function).

 For each interval \eqref{focus_interval}, we will also compute a numerical solution $\hat u_{ft}$ with initial data $\hat \psi_{ft}(\cdot,0)$ and with shocks moving with exact Rankine-Hugoniot speed. The curves of discontinuity in $\hat u_{ft}$ will be indexed as $\hat{h}_{ft,i}$. At time $t=0$, $\hat{h}_{ft,i}(0)=\hat{h}_i(0)$ for all $i$ such that $\hat h_i(0)\in[h_{k}(0),h_{k+\ell}(0)]$. Recall that at $t=0$, $\hat h_i=h_i$ for all $i$.

 Similarly, for each $p,q\in\mathbb{N}$, $k\leq p<q\leq k+\ell$, we construct the piecewise-constant (in space) function $\hat \psi^{p,q}_{ft}$ from the same piece-wise constant approximation of $\bar u^0|_{[h_k(0) , h_{k+\ell}(0)]}$  as $\hat \psi$ but only on the interval $[h_p(0) , h_{q}(0)]$. At time $t=0$, we define $\hat \psi^{p,q}_{ft}$ to be constant outside of $[h_p(0) , h_{q}(0)]$ with $\hat \psi^{p,q}_{ft}(x,0)=\hat \psi_{ft}(h_p(0)-,0)$ for $x<h_p(0)$ and $\hat \psi^{p,q}_{ft}(x,0)=\hat \psi_{ft}(h_{q}(0)+,0)$ for $x>h_{q}(0)$.
 
 As with $\hat \psi_{ft}$, we use globally constant extensions for each constant piece of $\hat \psi^{p,q}_{ft}$ (instead of the extensions in \Cref{ext_section}). For each discontinuity in $\hat \psi^{p,q}_{ft}$, the shock 
 curves in $\hat \psi^{p,q}_{ft}$ are the same as for the corresponding shock in $\hat \psi$ (the movement of the shocks is dictated by the same function). In analogy with $\hat u_{ft}$, we also define in a similar way $\hat u^{p,q}_{ft}$. 

 For our numerical scheme to construct the $\hat u$, we will numerically simulate the various $\hat u_{ft}$ associated with  $\hat u$.

 We would like to infer bounds on velocity errors in $\hat \psi_{ft}$ from \eqref{eq:velerr} but we need to account for the fact that the natural Rankine-Hugoniot speeds in $\hat \psi$ and $\hat \psi_{ft}$ are different due to potentially different left- and right-hand states at each shock (due to nonconstant extensions in $\hat\psi$ -- see \Cref{ext_section}).
 To understand how this can be done let us focus on one discontinuity $h_i$ in $\hat{\psi}$, with adjacent solutions $v_\ell$, $v_r$ and hence left-hand state (at time $t$) $v_\ell (h_i(t),t)$ and right-hand state $v_r (h_i(t),t)$. Remark that if for some time $t_0>0$, for all $t>t_0$ this discontinuity happens to be traveling with the natural Rankine-Hugoniot speed, then any part of the functions $v_\ell$, $v_r$  not revealed in $\hat{\psi}$ at $t=t_0$ will then never be revealed for all $t>t_0$. This can be seen in two ways. One way is to note that the speed of the characteristic of the function $v_\ell$ going through the point $(h_i(t),t)$ is travelling faster than the Rankine-Hugoniot speed of the shock $(v_\ell (h_i(t),t),v_r (h_i(t),t))$, while  the speed of the characteristic of the function $v_r$ going through the point $(h_i(t),t)$ is travelling slower. Remark here that $v_\ell > v_r$ pointwise.
 
 A second way to see this is the following. Consider the case when there is no numerical error, i.e. $\mathcal{R}_j\equiv0$ for all $j$. Furthermore, assume that there exists some $t_0>0$ such that for all $t>t_0$, all the discontinuities in $\hat\psi$ suddenly begin to travel with natural Rankine-Hugoniot speed. Then, for $t>t_0$ the function $\hat\psi$ is an exact solution to \eqref{system}. In particular, it verifies Ole\u{\i}nik's \emph{condition E} (see \eqref{condE}) -- which is a uniqueness criterion. Thus, the artificial extensions of the $v_\ell$, $v_r$ can never be revealed after time $t_0$. If they could be revealed, this would violate uniqueness because at time $t_0$ the hidden parts of the extensions can be arbitrarily perturbed (as long as smoothness is preserved).

Thus, the artificial extensions of the $v_\ell$, $v_r$ will only be revealed when $h_i$ is traveling not at Rankine-Hugoniot speed. We are interested in the height difference between the constant values of the $v_\ell$ and $v_r$ which are exposed at $t=0$ and the  values of the extensions which are revealed at later time. Thus, from \eqref{eq:velerr} we see that there {is} a maximum height difference (relative to the constant state at time $t=0$) of


\begin{align}
{\hat{M}}\Upsilon_i(t)\label{height_control_diff}
\end{align}
where 
\begin{align}\label{slopeMhat}
    \hat{M}\coloneqq \sup_t \mathrm{Lip}[\hat{\Lambda}(\cdot,t)],
\end{align}
cf. \eqref{slopeM} and
where we have used the formulas for the extensions (see \Cref{do_not_sim_remark}). 

From \eqref{height_control_diff}, we get an estimate on difference in Rankine-Hugoniot speeds (and thus in velocity errors) that result from the fact that $\hat \psi$ has non-constant extensions, while $\hat \psi_{ft}$ uses constant extensions.

Similarly, we can use \eqref{height_control_diff} in order to bound the difference between $\hat \psi$ and $\hat \psi_{ft}$. In particular,

\begin{equation}\label{eq:psipsift}
    \| \hat \psi(\cdot,t) - \hat \psi_{ft}(\cdot,t) \|_{L^\infty}
    \leq {\hat{M}} 
    \max_\alpha \Upsilon_\alpha(t) \coloneqq \Upsilon,
\end{equation}
where the $\max$ is taken over the set of all shocks in $\hat\psi_{ft}$.

Fix a time $t>0$. Inside the region $\{ (x,s) : x \in [h_k(s) , h_{k+\ell}(s)]\}$ we can interpret $\hat \psi_{ft}$ as an approximate front tracking solution whose shocks move with a velocity error (relative to the natural Rankine-Hugoniot speed of the shocks in $\hat \psi_{ft}$). The velocity error is controlled by \eqref{eq:velerr} and \eqref{height_control_diff}. Then, let $\tilde p \in\mathbb{N}$ be the lowest integer such that $h_{\tilde p}(t)\in (h_k(t) , h_{k+\ell}(t))$ (so in particular  $h_{\tilde p}$ has not collided with $h_{k}$) and similarly let $\tilde q\in\mathbb{N}$ be the largest integer such that $h_{\tilde q}(t)\in (h_k(t) , h_{k+\ell}(t))$ (so in particular, $h_{\tilde q}$ has not collided with $h_{k+\ell}$). Then we receive from \Cref{L1_diss_lemma},

\begin{claim}


 \begin{multline}\label{eq:Bressan}
     \| \hat \psi_{ft}(\cdot,t) - \hat u^{{\tilde p},{\tilde q}}_{ft}(\cdot,t)\|_{L^1([ h_k(t), h_{k+\ell}(t)])}   \\
     \leq    
\Big( (\max \hat \psi_{ft} - \min \hat \psi_{ft}) +\frac{h_{k+\ell}(0)- h_k(0)}{\delta} {\hat{M}}
    \max_\alpha \Upsilon_\alpha(t) \Big)^{\frac12}\times\\
    \sqrt{t}
 \Bigg[C\Bigg(\int\limits_{-S}^{S}\eta(\bar{u}(x,0)|\hat{\psi}(x,0))\,dx +  \int\limits_{0}^{t}\int\limits_{-S+sr}^{S-sr}(R_{\hat{\psi}}(x,r))^2\,dxdr\Bigg)e^{C t}.\Bigg]^\frac12\\
 + 2\frac{h_{k+\ell}(0)- h_k(0)}{\delta} {\hat{M}} 
 (\max_\alpha \Upsilon_\alpha (t))^2\\
 +
 \Big( (\max \hat \psi_{ft} - \min \hat \psi_{ft}) +\frac{h_{k+\ell}(0)- h_k(0)}{\delta} {\hat{M}}
    \max_\alpha \Upsilon_\alpha(t) \Big)
    2 \mathfrak{A}
 {\hat{M}} t \max_\alpha \Upsilon_\alpha(t) 
 \\
 =: \Gamma(t),
\end{multline}
where the $\max$ (in $\max_\alpha \Upsilon_\alpha$) is over the set of all shocks in $\hat \psi^{{\tilde p},{\tilde q}}_{ft}$. See \eqref{eq:velerr} for the definition of $\Upsilon_\alpha$.

\end{claim}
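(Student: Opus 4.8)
\emph{Proof plan.}
The plan is to regard $\hat u^{\tilde p,\tilde q}_{ft}$ as the exact monotonically decreasing entropy solution with initial datum $\hat\psi^{\tilde p,\tilde q}_{ft}(\cdot,0)$, to regard $\hat\psi^{\tilde p,\tilde q}_{ft}$ as an approximate front tracking solution whose shock curves $h_\alpha$ (generalized characteristics of $\bar u$) carry a controlled velocity error, and to apply the front tracking estimate \Cref{L1_diss_lemma}. The resulting sum over shocks is then estimated using the velocity bound \eqref{eq:velerr}, the height bound \eqref{height_control_diff}, the shift dissipation estimate behind \eqref{control_l21206}, Cauchy--Schwarz in time, and the fact that $[h_k(0),h_{k+\ell}(0)]$ carries at most $\tfrac{h_{k+\ell}(0)-h_k(0)}{\delta}$ such shocks.

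\textbf{Reduction.} I would first observe that by the definition of $\tilde p,\tilde q$ the curves $h_k,\dots,h_{\tilde p}$ (respectively $h_{\tilde q},\dots,h_{k+\ell}$) have already merged by time $t$, and since all the front tracking surrogates use globally constant extensions nothing new is revealed; hence $\hat\psi_{ft}(\cdot,t)$ and $\hat\psi^{\tilde p,\tilde q}_{ft}(\cdot,t)$ coincide on $[h_k(t),h_{k+\ell}(t)]$, so it suffices to bound $\|\hat\psi^{\tilde p,\tilde q}_{ft}(\cdot,t)-\hat u^{\tilde p,\tilde q}_{ft}(\cdot,t)\|_{L^1(\mathbb R)}$. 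These two functions share the same piecewise-constant, monotone decreasing initial datum with constant tails, so \eqref{infityassumption} holds with zero $L^1$ distance at $t=0$ (and $\hat u^{\tilde p,\tilde q}_{ft}$ is in fact the exact solution, since between constant states the Rankine--Hugoniot ODE is solved exactly). \Cref{L1_diss_lemma} then gives
\[
\|\hat\psi^{\tilde p,\tilde q}_{ft}(\cdot,t)-\hat u^{\tilde p,\tilde q}_{ft}(\cdot,t)\|_{L^1(\mathbb R)}\le\int_0^t\sum_\alpha\mathfrak{s}^{ft}_\alpha(r)\,\big|\dot h_\alpha(r)-\sigma^{ft}_\alpha(r)\big|\,dr ,
\]
where $\alpha$ runs over the at most $\tfrac{h_{k+\ell}(0)-h_k(0)}{\delta}$ shocks of $\hat\psi^{\tilde p,\tilde q}_{ft}$, $\mathfrak{s}^{ft}_\alpha(r)$ is the jump, and $\sigma^{ft}_\alpha$ is the Rankine--Hugoniot speed of that shock computed from its \emph{constant} adjacent states.

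\textbf{Estimating the shock sum.} Writing $\sigma^{\hat\psi}_\alpha$ for the Rankine--Hugoniot speed built from the (non-constant, extended) adjacent states of $\hat\psi$ and $\mathfrak{s}^{\hat\psi}_\alpha$ for the jump of $\hat\psi$ at $h_\alpha$, the height bound \eqref{height_control_diff} together with Lipschitz-continuity of $\sigma$ (constant $\mathfrak{A}=\sup A''$) gives $|\sigma^{\hat\psi}_\alpha(r)-\sigma^{ft}_\alpha(r)|\le 2\mathfrak{A}\hat M\Upsilon_\alpha(r)$ and $\mathfrak{s}^{ft}_\alpha(r)\le\mathfrak{s}^{\hat\psi}_\alpha(r)+2\hat M\Upsilon_\alpha(r)$. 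Substituting these into the integral above expands it into four groups. (i) $\int_0^t\sum_\alpha\mathfrak{s}^{\hat\psi}_\alpha|\dot h_\alpha-\sigma^{\hat\psi}_\alpha|\,dr$: Cauchy--Schwarz in $\alpha$ and then in time bounds this by $\big(\int_0^t\sum_\alpha\mathfrak{s}^{\hat\psi}_\alpha\,dr\big)^{1/2}\big(\int_0^t\sum_\alpha\mathfrak{s}^{\hat\psi}_\alpha|\dot h_\alpha-\sigma^{\hat\psi}_\alpha|^2\,dr\big)^{1/2}$; the first factor is $\le\big(t\big[(\max\hat\psi_{ft}-\min\hat\psi_{ft})+\tfrac{h_{k+\ell}(0)-h_k(0)}{\delta}\hat M\max_\alpha\Upsilon_\alpha(t)\big]\big)^{1/2}$, since $\sum_\alpha\mathfrak{s}^{\hat\psi}_\alpha(r)$ is the total variation of $\hat\psi$ in the cone, a staircase of total descent $\le\max\hat\psi_{ft}-\min\hat\psi_{ft}$ plus at most one revealed extension bit ($\le\hat M\Upsilon_\alpha$) per shock; the second factor is $\le\big[C\big(\int_{-S}^{S}\eta(\bar u(x,0)|\hat\psi(x,0))\,dx+\int_0^t\int_{-S+sr}^{S-sr}(R_{\hat\psi})^2\,dx\,dr\big)e^{Ct}\big]^{1/2}$ by the dissipation estimate \Cref{neg_entropy_diss} summed over all the shocks $h_\alpha$, all of which contribute to the decay of the single functional $\int\eta(\bar u|\hat\psi)$ exactly as in the proofs of \eqref{control_l21206} and of \eqref{need_supremum_of_this_too}; this is the first term of $\Gamma(t)$. (ii) $\int_0^t\sum_\alpha 2\hat M\Upsilon_\alpha|\dot h_\alpha-\sigma^{\hat\psi}_\alpha|\,dr$: by the $L^1$-in-time velocity bound $\int_0^t|\dot h_\alpha-\sigma^{\hat\psi}_\alpha|\,dr\le\Upsilon_\alpha(t)$ from \eqref{eq:velerr}, summed over $\le\tfrac{h_{k+\ell}(0)-h_k(0)}{\delta}$ shocks, this is $\le 2\tfrac{h_{k+\ell}(0)-h_k(0)}{\delta}\hat M(\max_\alpha\Upsilon_\alpha(t))^2$, the second term of $\Gamma(t)$. (iii) $\int_0^t\sum_\alpha\mathfrak{s}^{\hat\psi}_\alpha\cdot 2\mathfrak{A}\hat M\Upsilon_\alpha\,dr$: bounding $\Upsilon_\alpha(r)\le\max_\alpha\Upsilon_\alpha(t)$ and using the total-variation bound above gives $\le 2\mathfrak{A}\hat M t\max_\alpha\Upsilon_\alpha(t)\big[(\max\hat\psi_{ft}-\min\hat\psi_{ft})+\tfrac{h_{k+\ell}(0)-h_k(0)}{\delta}\hat M\max_\alpha\Upsilon_\alpha(t)\big]$, the third term of $\Gamma(t)$. (iv) The remaining $\int_0^t\sum_\alpha 2\hat M\Upsilon_\alpha\cdot 2\mathfrak{A}\hat M\Upsilon_\alpha\,dr$ is of higher order and is absorbed. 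Collecting (i)--(iv) and recalling the Reduction step yields the Claim.

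\textbf{Main obstacle.} The delicate part is not any individual inequality but organising the several front tracking surrogates ($\hat\psi_{ft}$, $\hat\psi^{\tilde p,\tilde q}_{ft}$, $\hat u^{\tilde p,\tilde q}_{ft}$) with their time-dependent cut-offs $\tilde p(t),\tilde q(t)$, checking that peeling off the boundary layers alters neither the interior states nor the total variation beyond the explicit $O(\tfrac1\delta\,\hat M\max_\alpha\Upsilon_\alpha)$ term, and --- most importantly --- establishing that the jump-weighted \emph{squared} velocity error summed over the $O(\tfrac1\delta)$ front tracking shocks of size $O(\delta)$ stays controlled by the right-hand side of \eqref{control_l21206}. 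This last point is exactly where a direct application of \eqref{def_control_shift} fails and where the localized surrogate $\hat\psi_{\text{fine}}$ of \Cref{sec:improvevelcontr}, feeding in through \eqref{eq:velerr} and \eqref{height_control_diff}, is indispensable; the Lipschitz estimate on $\sigma$ and the counting of shocks are then routine.
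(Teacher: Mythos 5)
Your proposal is correct and follows essentially the same route as the paper: apply \Cref{L1_diss_lemma} to the pair $\hat\psi^{\tilde p,\tilde q}_{ft}$, $\hat u^{\tilde p,\tilde q}_{ft}$ (with the observation that $\hat\psi^{\tilde p,\tilde q}_{ft}=\hat\psi_{ft}$ on the interval of interest), convert $\hat\psi_{ft}$-jumps and $\hat\psi_{ft}$-Rankine--Hugoniot speeds to $\hat\psi$-quantities via \eqref{height_control_diff}, and then close the main term by Cauchy--Schwarz against the jump-weighted squared velocity error that the dissipation estimate \eqref{dissipation_formula_psi3} controls, with the remaining terms handled by the velocity bound \eqref{eq:velerr} and the $\mathcal{O}(\delta^{-1})$ count of front tracking shocks. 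The only deviation is cosmetic algebra: you multiply out both the jump factor and the speed-error factor, producing a fourth cross-term of size $\mathcal{O}(\hat M^2\Upsilon^2/\delta)$ that you note is absorbed; the paper instead keeps the full $\hat\psi_{ft}$-jump as a single factor in the third group (bounded by $(\max\hat\psi_{ft}-\min\hat\psi_{ft})+\tfrac{h_{k+\ell}(0)-h_k(0)}{\delta}\hat M\max_\alpha\Upsilon_\alpha$), so no fourth group arises. This is harmless bookkeeping; the key ingredients and the structure of the resulting bound coincide.
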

In particular, \eqref{eq:Bressan} comes from comparing $\hat \psi^{{\tilde p},{\tilde q}}_{ft}$ to $\hat u^{{\tilde p},{\tilde q}}_{ft}$ using \Cref{L1_diss_lemma}. Remark that on the interval $[h_{\tilde p}(t),h_{\tilde q}(t)]$ we have $\hat \psi^{{\tilde p},{\tilde q}}_{ft}=\hat \psi_{ft}$.

We now prove the claim.

\begin{claimproof}
    From \Cref{L1_diss_lemma}, we have
    \begin{align}
    \begin{aligned}\label{step1claim}
    \| \hat \psi_{ft}(\cdot,t) - \hat u^{{\tilde p},{\tilde q}}_{ft}(\cdot,t)\|_
    {L^1([ h_k(t), h_{k+\ell}(t)])}  \hspace{3in}
    \\
 \leq   \int\limits_0^t \sum_{\alpha}\Bigg[\abs{\hat\psi_{ft}({h}_\alpha(s)-,s)- \hat\psi_{ft}({h}_\alpha(s)+,s)}\abs{\dot{{h}}_\alpha-\sigma(\psi_{ft}({h}_\alpha(s)-,s),\hat\psi_{ft}({h}_\alpha(s)+,s))}\Bigg]\, ds,
 \end{aligned}
 \end{align}
 where at each time $s$, the sum is over all distinct curves of discontinuity $h_\alpha$ of the function $\hat \psi^{{\tilde p},{\tilde q}}_{ft}$. Then, from \eqref{height_control_diff}, we get

\begin{multline}
\leq    \int\limits_0^t \sum_{\alpha}\Bigg[\abs{\hat\psi({h}_\alpha(s)-,s)-  \hat\psi({h}_\alpha(s)+,s)}\abs{\dot{{h}}_\alpha-\sigma(\hat\psi({h}_\alpha(s)-,s),\hat\psi({h}_\alpha(s)+,s))}\Bigg]\, ds\\
+ \int_0^t 2{\hat{M}} 
    \max_\alpha \Upsilon_\alpha(s) \sum_\alpha \abs{\dot{{h}}_\alpha-\sigma(\hat\psi({h}_\alpha(s)-,s),\hat\psi({h}_\alpha(s)+,s))}\, ds\\
  + 
  \Big( (\max \hat \psi_{ft} - \min \hat \psi_{ft}) + 
  \frac{h_{k+\ell}(0)- h_k(0)}{\delta} {\hat{M}}
    \max_\alpha \Upsilon_\alpha(t) \Big)
    2 \mathfrak{A}
  t \| \hat \psi - \hat \psi_{ft}\|_{L^\infty}
\end{multline}
where we have used 
that
$\sum_{\alpha}\abs{\hat\psi_{ft}({h}_\alpha(s)-,s)-  \hat\psi_{ft}({h}_\alpha(s)+,s)}\leq \max  \psi_{ft} -  \min  \psi_{ft}$. Also recall, at $t=0$ the rapidly decreasing pieces of $\bar{u}^0$ are broken up into steps of width $\delta$ to make $\hat \psi(\cdot,0)$, and thus there are at most $\tfrac{h_{k+\ell}(0)- h_k(0)}{\delta}$ of these steps. 
Then, using Cauchy-Schwartz' inequality, we get the  estimate,

\begin{multline}
    \leq    
 \Bigg[\int\limits_0^t \sum_{\alpha}\abs{\hat\psi({h}_\alpha(s)-,s)-  \hat\psi({h}_\alpha(s)+,s)}
 ds \Bigg]^\frac12 \times 
 \\
 \Bigg[\int\limits_0^t \sum_{\alpha}\abs{\hat\psi({h}_\alpha(s)-,s)-  \hat\psi({h}_\alpha(s)+,s)}\abs{\dot{{h}}_\alpha-\sigma(\hat\psi({h}_\alpha(s)-,s),\hat\psi({h}_\alpha(s)+,s))}^2\, ds\Bigg]^\frac12\\
 + 2\frac{h_{k+\ell}(0)- h_k(0)}{\delta} {\hat{M}} 
 (\max_\alpha \Upsilon_\alpha (t))^2\\
 +
 \Big( (\max \hat \psi_{ft} - \min \hat \psi_{ft}) +\frac{h_{k+\ell}(0)- h_k(0)}{\delta} {\hat{M}}
    \max_\alpha \Upsilon_\alpha(t) \Big)
    2 \mathfrak{A}
 {\hat{M}} t \max_\alpha \Upsilon_\alpha(t)
\end{multline}
 
 Then, note again that $\sum_{\alpha}\abs{\hat\psi_{ft}({h}_\alpha(s)-,s)-  \hat\psi_{ft}({h}_\alpha(s)+,s)}\leq \max  \psi_{ft} -  \min  \psi_{ft}$. Also recall, at $t=0$ the rapidly decreasing pieces of $\bar{u}^0$ are broken up into steps of width $\delta$ to make $\hat \psi(\cdot,0)$, and thus there are at most $\tfrac{h_{k+\ell}(0)- h_k(0)}{\delta}$ of these steps. Recall also \eqref{height_control_diff}. Thus,
 from the dissipation estimate \eqref{dissipation_formula_psi3},

\begin{multline}\label{eq:theend?}
    \leq    
\Big( (\max \hat \psi_{ft} - \min \hat \psi_{ft}) +\frac{h_{k+\ell}(0)- h_k(0)}{\delta} {\hat{M}}
    \max_\alpha \Upsilon_\alpha(t) \Big)^{\frac12}\times\\
    \sqrt{t}
 \Bigg[C\Bigg(\int\limits_{-S}^{S}\eta(\bar{u}(x,0)|\hat{\psi}(x,0))\,dx +  \int\limits_{0}^{t}\int\limits_{-S+sr}^{S-sr}(R_{\hat{\psi}}(x,r))^2\,dxdr\Bigg)e^{C t}.\Bigg]^\frac12\\
 + 2\frac{h_{k+\ell}(0)- h_k(0)}{\delta} {\hat{M}} 
 (\max_\beta \Upsilon_\beta (t))^2\\
 +
 \Big( (\max \hat \psi_{ft} - \min \hat \psi_{ft}) +\frac{h_{k+\ell}(0)- h_k(0)}{\delta} {\hat{M}}
    \max_\alpha \Upsilon_\alpha(t) \Big)
    2 \mathfrak{A}
 {\hat{M}} t \max_\alpha \Upsilon_\alpha(t)
\end{multline}

\end{claimproof}

We can bound the $L^1(h_k(t) , h_{k+\ell}(t))$ distance between $\hat \psi(\cdot,t)$ and $\hat u(\cdot,t)$ by combining \eqref{eq:Bressan} and \eqref{eq:psipsift}.

\subsubsection{From $\hat\psi_{ft}$, we construct $\hat\Psi_{ft}$}

For the next \Cref{sec:triangles} we want estimates on $\norm{\hat u_{ft}-\hat \psi_{ft}}_{L^1(\mathbb{R})}$ (globally on $\mathbb R$!).
However, \eqref{eq:Bressan} only gives us error estimates on the interval $[h_k(t),h_{k+\ell}(t)]  $ and 
for the function $\hat u^{{\tilde p},{\tilde q}}_{ft}$ instead of $\hat u_{ft}$.
To circumvent this issue we construct an intimately connected function $\hat \Psi_{ft}$ on which we have the desired global $L^1$ estimates relative to $\hat u_{ft}$

Indeed, we cannot measure $\norm{\hat u_{ft}-\hat \psi_{ft}}_{L^1(\mathbb{R})}$  using the $L^1$ estimates from front tracking (i.e., \Cref{L1_diss_lemma}) because the shocks at position $h_k(t)$ or $h_{k+\ell}(t)$ in $\hat\psi_{ft}$ will not necessarily be moving with Rankine-Hugoniot speed plus a controllable error. This is because the shocks in $\hat\psi_{ft}$ might start being influenced by pieces of $\hat\psi$ coming from nearly non-decreasing pieces of $\bar{u}^0$ -- the ``boundary'' shocks in particular (see \Cref{shocks_fig}). 

Thus, we are led to the construction of the function $\hat\Psi_{ft}\colon\mathbb{R}\times[0,T]\to\mathbb{R}$. The construction of $\hat\Psi_{ft}$ follows precisely the construction of $\hat\psi_{ft}$, except we will make modifications outside of the interval $[ h_k(t), h_{k+\ell}(t)]$, such that

\begin{align}\label{capvslowercase}
\norm{\hat u_{ft}(\cdot,t)-\hat \Psi_{ft}(\cdot,t)}_{L^1(\mathbb{R})}
\leq \Gamma(t),
\end{align} 
where $\Gamma$ was defined in \eqref{eq:Bressan}, holds for all $t$.

Note \eqref{capvslowercase} is possible by simply slowing down (or speeding up) the shocks in $\hat\psi_{ft}$ when they start moving too fast (or too slow) due to the corresponding ``front tracking'' shock in $\hat\psi$ colliding with a ``boundary'' shock in $\hat\psi$ coming from the left (or the right). This slowing or speeding up process yields the $\hat\Psi_{ft}$. Remark that by slowing down shocks on the left of the function $\hat\psi_{ft}$, which have collided with $h_k$, we are preventing some collisions from occurring, further slowing down some shocks in $\hat\Psi_{ft}$. Similarly, by speeding up shocks on the right of the function $\hat\psi_{ft}$, which have collided with $h_{k+\ell}$, we are preventing some collisions from occurring, further speeding up some shocks in $\hat\Psi_{ft}$. This will ensure 
\begin{align}\label{capvslowerequal}
\hat \Psi_{ft}(\cdot,t)=\hat \psi_{ft}(\cdot,t)
\end{align} 
on the interval $[ h_k(t), h_{k+\ell}(t)]$ (for all $t$).

Recall also that $ \hat v_i(x,t) > \hat v_{i+1}(x,t)$ for all  $i$ and for all $(x,t)$ and $\partial_u\sigma(u,v)=\partial_v\sigma(u,v)>0$.  Remark also  that the argument in \Cref{sec:improvevelcontr} (and in particular, \eqref{eq:velerr}) actually holds for every shock in $\hat\psi$. Recall also \eqref{eq:psipsift}. Finally, recall (the proof of) \eqref{eq:Bressan}. This will ensure \eqref{capvslowercase}.

\subsubsection{Position estimates on ``front tracking'' shocks}\label{sec:triangles}

By construction $\bar u^0$ is strictly decreasing with slope magnitude at least $ \epsilon$ on $[\hat h_k(0) , \hat h_{k+\ell}(0)]$ otherwise we would not have treated this part of the solution as rapidly decreasing. 
This property is preserved in the exact solution $\hat u_{ft}$ which, actually, gets steeper in time. In order to obtain estimates for the positions of discontinuities in $ \hat u$ we would like to argue that $\hat u_{ft}$ has a  minimal slope which is obviously not true, strictly speaking, since it consists of constant pieces.  We make the following observation instead. By construction, $\hat u_{ft}$ can be chosen such that
\begin{align}
    \inf \frac{ \hat u_{ft}(x,t) - \hat u_{ft}(y,t)}{y-x} > \frac{\epsilon}{2},\label{discret_slope}
\end{align} 
where at time $t$ the infimum runs over all $x,y$ in the interval between the first and last shocks in $\hat u_{ft}$ and verifying $y > x + \delta$ . This ``discrete slope'' of $\hat u_{ft}(\cdot, t)$ is computable and we denote it by $\mathcal{S}(\hat u_{ft}(\cdot,t))$. 

\begin{figure}[tb]
      \includegraphics[width=.8\textwidth]{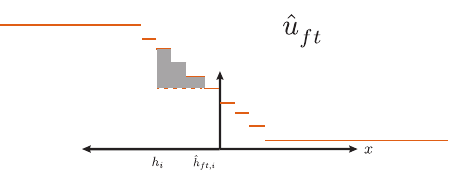}
  \caption{If the distance between $\hat{h}_{ft,i}$ and $h_i$ is too large, the $L^1$ distance between $\hat\Psi_{ft}$ and $\hat u_{ft}$ (represented by the gray ``triangle'' in this diagram) is too large, violating \eqref{capvslowercase}.}\label{triangel_pic}
\end{figure}

Thus, for shocks $h_i$ which are inside the interval $(h_{k}(t),h_{{k+\ell}}(t))$ at time $t$, from  \eqref{capvslowercase}, and \eqref{capvslowerequal} we obtain a maximal position error of each such shock of
\begin{align}\label{eq:puis}
\abs{h_i(t)-\hat{h}_{ft,i}(t)}  \leq  \frac{\sqrt{2}}{\sqrt{\max\{\sfrac{\epsilon}{2},\mathcal{S}(\hat u_{ft}(\cdot,t))  \}}}\bigg( \Gamma(t)\bigg)^{1/2}
     =: \Delta_{\text{inner}}.
\end{align}
Remark that if $\hat{h}_i(t)$ is in the interval $(\hat h_{k}(t),\hat h_{{k+\ell}}(t))$, then $\hat{h}_{ft,i}(t)=\hat{h}_i(t)$ and thus \eqref{eq:puis} gives us  control on $\Delta_i$, (recall $\Delta_i$ simply denotes our best upper bound on $\abs{h_i(t)-\hat{h}_{i}(t)}$). See \Cref{triangel_pic}.

Local versions of $\mathcal{S}(\hat u_{ft}(\cdot,t))$ can be computed to improve the estimate \eqref{eq:puis}.
In general, the formula \eqref{eq:puis} is only valid when the interval $(\hat{h}_{ft,i}-\Delta_{\text{inner}},\hat{h}_{ft,i}+\Delta_{\text{inner}})$ it gives for the position of the shock 
$h_i$ verifies 
\begin{align}\label{needstoverify}
(\hat{h}_{ft,i}-\Delta_{\text{inner}},\hat{h}_{ft,i}+\Delta_{\text{inner}})\subset [\hat{h}_{ft,k+1}(t), \hat{h}_{ft,k+\ell-1}(t)].
\end{align}
This is because outside the interval between the first and last shocks, $\hat u_{ft}$ is constant and thus the estimate \eqref{discret_slope} on the discrete slope is not valid. 

Let us then consider the case when both \eqref{needstoverify} and \eqref{eq:puis} are not verified. We first consider the left-most shocks in $\hat{u}_{ft}$ and $\hat\Psi_{ft}$. 

Our goal is to bound from above the difference 
$\hat{h}_{ft,k+1}(t)-h^{\hat{\Psi}}_{ft,k+1}(t)$, where $h^{\hat{\Psi}}_{ft,k+1}(t)$ is the position of the left-most shock in $\hat\Psi_{ft}$. Then, if 
  \begin{align}\label{difference_case}
   \hat{\Psi}_{ft}(h^{\hat{\Psi}}_{ft,k+1}(t)-,t)-\hat{\Psi}_{ft}(h^{\hat{\Psi}}_{ft,k+1}(t)+,t)\leq \bigg( \Gamma(t)\bigg)^{1/2},
   \end{align}
then $h^{\hat{\Psi}}_{ft,k+1}(t)$ and $\hat{h}_{ft,k+1}(t)$ verify 
\begin{equation}\label{resultcase1}
   \hat{h}_{ft,k+1}(t)-h^{\hat{\Psi}}_{ft,k+1}(t) \leq
   (\mathfrak{A} \hat{M}T+1) 
    \max_\alpha \Upsilon_\alpha(t) +\mathfrak{A}T\bigg( \Gamma(t)\bigg)^{1/2},
\end{equation}
where we have used \eqref{eq:psipsift} and \eqref{eq:velerr} and we recall that $\mathfrak{A}$ is an upper bound for the Lipschitz constant of Rankine-Hugoniot speeds (see \eqref{eq:RH}). As above, the $\max$ is taken over the set of all shocks in $\hat\psi_{ft}$.

If \eqref{difference_case} does not hold, then from \eqref{capvslowercase} we can deduce
\begin{align}\label{resultcase2}
  \hat{h}_{ft,k+1}(t)- h^{\hat{\Psi}}_{ft,k+1}(t) \leq \bigg( \Gamma(t)\bigg)^{1/2}.
\end{align}

We have similar estimates on the right-most shocks in $\hat{u}_{ft}$ and $\hat\Psi_{ft}$.

We conclude that if \eqref{eq:puis} is not verified, then
\begin{align}\label{conclusion_cases}
\abs{h_i(t)-\hat{h}_{ft,i}(t)} \leq \Delta_{\text{inner}}+ (\mathfrak{A} \hat{M}T+1)
    \max_\alpha \Upsilon_\alpha(t) +(\mathfrak{A}T+1)\bigg( \Gamma(t)\bigg)^{1/2},
\end{align}
because we know that $h_i(t)$ must be between either the left-most shock in $\hat\Psi_{ft}$ and the left-most shock in $\hat{u}_{ft}$, or the between the right-most shock  in $\hat\Psi_{ft}$ and the right-most shock in $\hat{u}_{ft}$. Remark that the right-hand side of \eqref{conclusion_cases} is always greater than or equal to the right-hand side of \eqref{eq:puis}, so \eqref{conclusion_cases} gives a worst-case bound.

As above with \eqref{eq:puis}, the formula \eqref{conclusion_cases} clearly only holds for shocks $i$ such that $h_i\in (h_{k}(t),h_{k+\ell}(t))$. Further, remark that if $\hat{h}_i(t)$ is in the interval $(\hat h_{k}(t),\hat h_{{k+\ell}}(t))$, then $\hat{h}_{ft,i}(t)=\hat{h}_i(t)$ and thus \eqref{conclusion_cases} gives us  control on $\Delta_i$.

If $h_i\notin (h_{k}(t),h_{k+\ell}(t))$ or $\hat h_i\notin (\hat h_{k}(t),\hat h_{k+\ell}(t))$, then there are three cases (assuming $\hat h_{k}(t),\hat h_{k+\ell}(t)$ are sufficiently far apart, otherwise the estimates are trivial):
\begin{itemize}
    \item[Case 1] Both $h_i$ and $\hat h_i$ have collided with a ``boundary'' shock. In this case, control on $\abs{h_i(t)-\hat{h}_{i}(t)}$ comes from \Cref{sec:bdyshocks}.
    \item[Case 2] If only $h_i$ collided with a ``boundary'' shock $h_\ast$ (here, $h_\ast=h_k$ or $h_\ast=h_{k+\ell}$), then use $\abs{h_i(t)-\hat{h}_{i}(t)}\leq $\abs{h_i(t)-\hat{h}_{\ast}(t)}+\abs{\hat h_\ast(t)-h_{\ast}(t)} and see again \Cref{sec:bdyshocks}.
    \item[Case 3] If only $\hat h_i$ collided with a ``boundary'' shock $\hat h_k$ (or $\hat h_{k+\ell}$), then use that $\hat h_{i+j}$ (or $\hat h_{i-j}$) has not collided with a ``boundary'' shock (for some $j\in\mathbb{N}$, choose the smallest possible) and neither has $h_{i+j}$ (or $h_{i-j}$). Then, we have control on $\abs{\hat h_{i+j}-h_{i+j}}$ ($\abs{\hat h_{i-j}-h_{i-j}}$) from \eqref{conclusion_cases}.  Moreover, $\abs{\hat h_{i+j}-\hat h_i}$ ($\abs{\hat h_{i-j}-\hat h_i}$) is expected to be small ($\mathcal{O}(\delta)$, but this can be checked a posteriori). Finally, remark that $h_i$ is between $h_{i+j}$ (or $h_{i-j}$) and $h_k$ (or $h_{k+\ell}$).
\end{itemize}




\subsection{Control on ``boundary'' shocks}\label{sec:bdyshocks}

Section \ref{sec:large} has provided control of position uncertainty for ``large'' shocks and Section \ref{sec:cdp} has provided control on position uncertainty for ``front tracking'' shocks (coming from the rapidly decreasing regions of $\bar{u}^0$) as long as they do not interact with other parts of the solution. It remains to control position uncertainty of left and right-most shocks in the ``front tracking'' parts of $\hat{u}, \hat \psi $ (corresponding to rapidly decreasing regions) -- the \emph{boundary shocks}. We are going to discuss the ``left-most shock'' case in detail where to the left of the shock is a $\hat v_i$ corresponding to a nearly non-decreasing region of $\bar{u}^0$ (see \Cref{boundry_fig}); the ``right-most shock'' case (with a nearly non-decreasing region to the right) is analogous. The case of a ``front tracking'' (corresponding to rapidly decreasing) region to both the left \emph{and} right of the shock is also analogous and we omit the details (see \Cref{sec:ftbothsides}).

\begin{figure}[tb]
      \includegraphics[width=.8\textwidth]{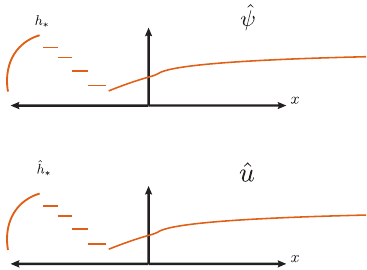}
  \caption{The definition of the $\ast$-th shock.}\label{boundry_fig}
\end{figure}

Note that the estimates from \Cref{sec:large} apply to these shocks but are not particularly useful (in particular, see \eqref{def_control_shift}). One issue is that dissipation estimate \eqref{neg_entropy_diss} only provides rather weak velocity control due to the small size of the shock but the key issue that needs to be sorted out is how many  of the ``front tracking'' shocks might collide with the shock in question, and to show that this cannot be a process feeding on itself.
Let us denote the shock in question as the $\ast$-th shock (see \Cref{boundry_fig}) and notice that, due to the position uncertainty of the small shocks to its right \eqref{eq:puis}, there is an immediate contribution to $\Delta_\ast$ due to the uncertainty about its right-hand state. The main idea is to transfer the control we have on ``front tracking'' shocks (see \Cref{sec:triangles}) to in turn control the ``boundary'' shocks.

\begin{figure}[tb]
    \centering
    \begin{subfigure}[b]{0.3\textwidth}
        \includegraphics[width=\textwidth]{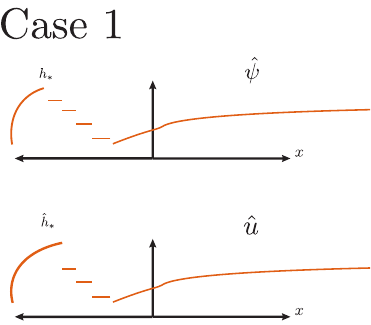}
        \label{fig:case1}
    \end{subfigure}
    ~ 
    \begin{subfigure}[b]{0.3\textwidth}
         \includegraphics[width=\textwidth]{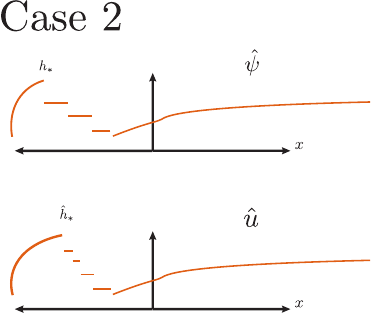}
        \label{fig:case2}
    \end{subfigure}
    ~ 
    
    \begin{subfigure}[b]{0.3\textwidth}

       \includegraphics[width=\textwidth]{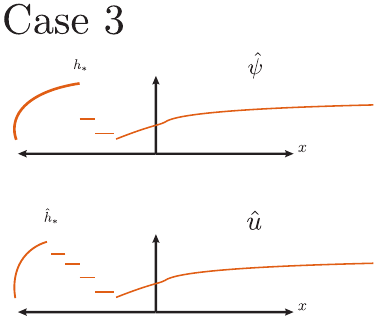}
        \label{fig:case3}  
    \end{subfigure}
\begin{subfigure}[b]{0.3\textwidth}
 \includegraphics[width=\textwidth]{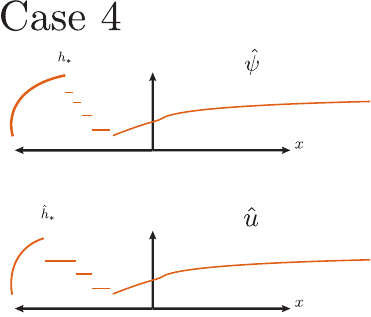}
        \label{fig:case4}
 \end{subfigure}

    \caption{The four cases used to control $\abs{\hat h_\ast- h_\ast}$.}\label{fig:4cases}
\end{figure}

The contributions $A_\ast, C_\ast$ to $\Delta_\ast$ are as above (see \eqref{def_control_shift}); it is the size of $B_\ast$ we wish to discuss. Let us stress that velocity errors only add to position errors if both have the same sign. Thus, we distinguish four cases (see \Cref{fig:4cases}):

\begin{enumerate}
    \item[Case 1]{\bf $\hat h_\ast> h_\ast$ , $\hat h_\ast$ has collided with more small shocks than $h_\ast$}:\\
    In this case, the erroneous collisions actually reduce the position error, $B_\ast$ does not contribute to the growth of $\Delta_\ast$.
    
     \item[Case 2:]{\bf $\hat h_\ast> h_\ast$ , $\hat h_\ast$ has collided with less small shocks than $h_\ast$}:\\
   Consider the last of the ``front tracking'' shocks to collide with $h_\ast$, and whose collision with $h_\ast$ was not reproduced in $\hat h_\ast$: let's call it $h_j$. Before $h_j$  collides with $h_\ast$, $h_j$ has a position error (from \eqref{conclusion_cases}) of 
   \begin{align}
       \abs{h_j-\hat{h}_j}<\Delta_{\text{inner}}+ (\mathfrak{A} \hat{M}T+1) 
    \max_\alpha \Upsilon_\alpha(t) +(\mathfrak{A}T+1)\bigg( \Gamma(t)\bigg)^{1/2},
   \end{align} 
   where $\Delta_{\text{inner}}$ is from \eqref{eq:puis}. The important remark here is that the formula \eqref{conclusion_cases} continues to hold for the shock $h_j$ even after the ``boundary'' shock $h_\ast$ has collided with the shock $h_j$: this will only push $h_j$ and $\hat{h}_j$ closer together, due to the collision with the ``boundary'' shock $h_\ast$ increasing the value of the left-hand state $\hat\psi(h_j(t)-,t)$. Remark here that we are using that $h_j$ is the last of the ``front tracking'' shocks to collide with $h_\ast$, and whose collision with $h_\ast$ was not reproduced in $\hat h_\ast$.

   To conclude, due to $\hat h_* < \hat h_j$, we have 

    \begin{align}
       \abs{h_*-\hat{h}_*}<\Delta_{\text{inner}}+ (\mathfrak{A} \hat{M}T+1) 
    \max_\alpha \Upsilon_\alpha(t) +(\mathfrak{A}T+1)\bigg( \Gamma(t)\bigg)^{1/2}.
   \end{align} 

    \item[Case 3:]{\bf  $\hat h_\ast< h_\ast$ , $\hat h_\ast$ has collided with fewer small shocks than $h_\ast$}:\\
    Here the erroneous collisions actually reduce the position error, $B_\ast$ does not contribute to the growth of $\Delta_\ast$.

    \item[Case 4:]{\bf $\hat h_\ast<  h_\ast$ , $\hat h_\ast$ has collided with more small shocks than $h_\ast$}:\\
    This case is exactly analogous to Case 2 above. 
    
    Consider the last of the ``front tracking'' shocks to collide with $\hat h_\ast$, and whose collision with $\hat h_\ast$ was not reproduced in $h_\ast$: let's call it $\hat h_j$. Before $\hat h_j$  collides with $\hat h_\ast$, $\hat h_j$ has a position error (from \eqref{conclusion_cases}) of
    \begin{align}
    \abs{h_j-\hat{h}_j}<\Delta_{\text{inner}}+ (\mathfrak{A} \hat{M}T+1) 
    \max_\alpha \Upsilon_\alpha(t) +(\mathfrak{A}T+1)\bigg( \Gamma(t)\bigg)^{1/2}.
    \end{align}

    Thus, due to $h_*<h_j$,
    \begin{align}
    \abs{h_*-\hat{h}_*}<\Delta_{\text{inner}}+ (\mathfrak{A} \hat{M}T+1) 
    \max_\alpha \Upsilon_\alpha(t) +(\mathfrak{A}T+1)\bigg( \Gamma(t)\bigg)^{1/2}.
    \end{align}
   
\end{enumerate}

All in all, in Case 1 and Case 3 above, this means that the impact of  $B_\ast$ on $\Delta_\ast$ is bounded by a term that contributes to the exponential growth factor in a Gronwall-type argument analogous to \eqref{gronwall_0_7252020_1}. We write,
\begin{equation}\label{eq:deltabs}
    \Delta_\ast \leq \tilde A_\ast + \tilde B_\ast  + \tilde C_\ast+\tilde D_\ast \\
\end{equation}
with
\begin{equation}\label{parts_control_shifts3}
\tilde A_\ast := \int\limits_0^t 
\tilde \zeta(t,s)\abs{\dot{\hat{h}}_\ast(s)-\sigma(\hat{v}_{\hat{L}_\ast(s)}(\hat{h}_\ast(s),s),\hat{v}_{\hat{R}_\ast(s)}(\hat{h}_i(s),s))}\, ds
\end{equation}
and
\begin{equation}\label{need_supremum_of_this3}
\tilde B_\ast := \int\limits_0^t
\tilde \zeta(t,s)\Big(\mathfrak{A}\Big|\hat{v}_{\hat{L}_\ast/\hat{R}_\ast(s)}(\hat{h}_\ast(s),s)
-\hat{v}_{L_\ast/R_\ast(s)}(\hat{h}_\ast(s),s)\Big|\Big) \, ds
\end{equation}
where the subscript is $L, R$ depending on whether $h_\ast$ is on the left or the right of the wave front tracking area.  The integrand  in $\tilde B_\ast$ accounts for uncertainty of trace values resulting from shocks coming from outside the wave front tracking area.
 
Lastly,

\begin{equation}\label{need_supremum_of_this_too3}
\tilde C_\ast := \norm{\frac{
\tilde \zeta(t,\cdot)}{\sqrt{\mathfrak{s}_\ast(\cdot)}} }_{L^2(0,t)}\cdot
\Bigg[ Ce^{C t }\Bigg(\int\limits_{-S}^{S}\eta(\bar{u}(x,0)|\hat{\psi}(x,0))\,dx 
+\mathcal{R}(t)\Bigg)
\Bigg]^{\frac{1}{2}},
\end{equation}
and
\begin{equation}
\tilde D_\ast := \zeta(t,0)\cdot \big[\mbox{maximum error from Case 2 or Case 4 above}\big],
\end{equation}
with $\zeta$ as in \eqref{zeta_def}. The ``maximum error from Case 2 or Case 4 above'' in $\tilde D_\ast$ means the largest possible value of $\abs{\hat h_\ast - h_\ast}$ assuming we are in Case 2 or Case 4 for the entire time interval $[0,T]$ -- this is necessary due to potentially switching into Case 1 or Case 3 after having been in Case 2 or Case 4 for a while.
Thus, we get
\begin{equation}\label{D_ast}
\tilde D_\ast \leq \zeta(t,0)\cdot \Bigg[\Delta_{\text{inner}}+ (\mathfrak{A} \hat{M}T+1) 
    \max_\alpha \Upsilon_\alpha(t) +(\mathfrak{A}T+1)\bigg( \Gamma(t)\bigg)^{1/2}\Bigg].
\end{equation}



\subsubsection{``Front tracking'' shock on both sides}\label{sec:ftbothsides}

As mentioned above, we now consider the case  when the ``boundary'' shock $\hat h_\ast, h_\ast$ might be adjacent to, both on the left and right, a ``front tracking'' shock. In other words, there is an interval corresponding to a rapidly decreasing region in $\bar{u}^0$ at ($t=0$) to both the left \emph{and} right of the shock $\hat h_\ast, h_\ast$ (for example, this can occur as in \Cref{twobdycollide2_fig}). This situation is analogous to the above and we omit the details. In particular, two major cases occur here. First, note there are two perspectives: the  ``left-most shock'' or ``right-most shock'', depending on which rapidly decreasing region we consider. The first case: in both the ``left-most shock'' or ``right-most shock'' perspective,  Case 1 or Case 3 above applies. The second case: from at least one of the  ``left-most shock'' or ``right-most shock'' perspective, Case 2 or Case 4 applies, and this gives us control on $\abs{\hat h_\ast - h_\ast}$.

\section{The $L^2$ estimate and dissipation calculations}\label{Revelation_section}

To prove \Cref{main_theorem}, we need to show \eqref{control_l21206} holds. Furthermore, we need to show that \eqref{def_control_shift} is a suitable upper bound for the quantity  $\abs{\hat{h}_i(t)-h_i(t)}$.

\subsection{Step 1: proof of \eqref{control_l21206}}

We now show how to control the growth in time of $\norm{\bar{u}(\cdot,t)-{\hat{\psi}}(\cdot,t)}_{L^2}$. We follow an argument from  \cite[Section 7]{chen2020uniqueness} (see also \cite{2017arXiv170905610K})\footnote{Our arguments in this section will involve stopping and restarting the clock every time there is a collision between generalized characteristics $h_i$. Weak solutions $u$ to \eqref{system} will automatically have $C^0([0,\infty);W^{-1,\infty}(\mathbb{R}))$ regularity. The integral formulation of the entropy inequality \eqref{entropy_integral} includes the $t=0$ boundary term, and from this we get $u$ is continuous at $t=0$ with values in $L^1_{\text{loc}}(\mathbb{R})$. Because $L^1_{\text{loc}}(\mathbb{R})$ is a strong topology, we get the same regularity for the function $t\mapsto \eta(u(\cdot,t))$. For positive times $t>0$, we do not enjoy this regularity and $\eta(u)$ is only defined at almost every time. However, this difficulty with stopping and restarting the clock at positive times is not a real roadblock and can handled with the use of approximate limits. We omit these technical details here. For a reference, see  \cite[Section 7 and Lemma 7.1]{chen2020uniqueness}.}.

We work on the cone of information. First, choose some $S>0$ such that 
\begin{align}
{S}>\max\set{\abs{x_1},\abs{x_N}}+sT+T\sup |A'|,
\end{align}
and where $s>0$ is chosen such that $\abs{q(a;b)}\leq s\eta(a|b)$ for all $a,b$ verifying $\abs{a},\abs{b}\leq\max\{\norm{\bar{u}}_{L^\infty},\norm{\hat{u}}_{L^\infty}\}$.
Then define $h_0(t)\coloneqq -S+st$ and $h_{N+1}(t)\coloneqq S-st$. Note that the speeds of the generalized characteristics $h_i$ are bounded by $\sup |A'|$ and thus $S$ is chosen such that the curves $h_0$ and $h_{N+1}$ do not intersect the generalized characteristic curves $h_i$ on the time interval $[0,T]$.  

Consider two successive times $\hat{t}_j<\hat{t}_{j+1}$ such that there is no interaction between the $h_i$ on the interval $[\hat{t}_j,\hat{t}_{j+1}]$. 

We denote by $\mathfrak{l}_j$ the set of all $i$ such that $h_i(t)\neq h_{i+1}(t)$ for all $t\in[{t}_j,{t}_{j+1}].$  Then, we fix $i \in \mathfrak{l}_j$ and  for $t\in[{t}_j,{t}_{j+1}]$, integrate \eqref{combined1_distributional} over the region 
\begin{align}
\set{(x,r)|{t}_j<r<t, h_i(r)<x<h_{i+1}(r)}.
\end{align}
Remark that on this set ${\hat{\psi}}=\hat{v}_{i+1}$. In the context of \eqref{combined1_distributional}, pick  $\bar{u}$ to be playing the role of $\hat{u}_2$ and pick $\hat{v}_{i+1}$ to be playing the role of $\hat{u}_1$. This gives,
\begin{multline}\label{dissipation_formula_psi1026}
\int\limits_{h_i(t)}^{h_{i+1}(t)}\eta(\bar{u}(x,t)|\hat{\psi}(x,t))\,dx-\int\limits_{h_i({t}_j)}^{h_{i+1}({t}_j)}\eta(\bar{u}(x,{t}_j)|\hat{\psi}(x,{t}_j))\,dx
\\
\leq\int\limits_{{t}_j}^{t} F_i^{+}(r)-F_{i+1}^{-}(r)\,dr
-\int\limits_{{t}_j}^{t} \int\limits_{h_i(r)}^{h_{i+1}(r)}\Bigg[\Bigg(\partial_x \bigg|_{(x,r)}\hspace{-.05in} \eta'({\hat{\psi}})\Bigg) A(\bar{u}(x,r)|{\hat{\psi}}(x,r))
\\
+R_i(x,t)\eta''({\hat{\psi}}(x,r))\big[\bar{u}(x,r)-{\hat{\psi}}(x,r)\big]
\Bigg]\,dxdr,
\end{multline}
where $R_{i}$ is the residual for the numerical solution $\hat{v}_{i}$ (see \eqref{system-numerical}), and
\begin{align}
    F_i^{\pm}(r)\coloneqq q(\bar{u}(h_i(r)\pm,r);{\hat{\psi}}(h_i(r)\pm,r))-\dot{h}_i(r)\eta(\bar{u}(h_i(r)\pm,t)|{\hat{\psi}}(h_i(r)\pm,r)).
\end{align}

We sum \eqref{dissipation_formula_psi1026} over all $i \in \mathfrak{l}_j$. 
We group the terms corresponding to $F_i^{\pm}$ together, and the $F_{i+1}^{\pm}$ together. This yields,
\begin{multline}\label{dissipation_formula_psi1026_1}
\int\limits_{-S+st}^{S-st}\eta(\bar{u}(x,t)|{\hat{\psi}}(x,t))\,dx-\int\limits_{-S+s{t}_j}^{S-s{t}_j}\eta(\bar{u}(x,\hat{t}_j)|{\hat{\psi}}(x,{t}_j))\,dx
\\
\leq\int\limits_{\hat{t}_j}^{t} \sum_{i} F_i^{+}(r)-F_{i}^{-}(r)\,dr
-\int\limits_{\hat{t}_j}^{t} \int\limits_{-S+sr}^{S-sr}\Bigg[\Bigg(\partial_x \bigg|_{(x,r)}\hspace{-.15in} \eta'({\hat{\psi}})\Bigg) A(\bar{u}(x,r)|{\hat{\psi}}(x,r))
\\
+R_{{\hat{\psi}}}(x,t)\eta''({\hat{\psi}}(x,r))\big[\bar{u}(x,r)-{\hat{\psi}}(x,r)\big]
\Bigg]\,dxdr,
\end{multline}
where we have used that $F_{N+1}^-,F_{0}^+\leq 0$ due to the definition of $s$ and $\dot{h}_0=s$, $\dot{h}_{N+1}=-s$. Furthermore, we define 
\begin{align}\label{residual_psi}
R_{{\hat{\psi}}}(x,t)\coloneqq
    \begin{cases}
    R_1(x,t) &\text{if } x<{h}_1(t),\\
    R_2(x,t) &\text{if } {h}_1(t)<x<{h}_2(t),\\
    &\vdots\\
    R_{N+1}(x,t) &\text{if } {h}_N(t)<x,\\
    \end{cases}
\end{align}
where $R_i$ is the residual for the numerical solution $\hat{v}_i$ (see \eqref{system-numerical}). Compare \eqref{residual_psi} with \eqref{def:cR}. In practice, we cannot compute $\norm{R_{{\hat{\psi}}}}_{L^2}$ due to the $h_i$ being unknown, so we must use \eqref{def:cR}.

Then, due to \Cref{neg_entropy_diss}, we have that 
\begin{align}\label{control_Fs}
    \sum_{i} F_i^{+}(r)-F_{i}^{-}(r)\leq 0.
\end{align}    
Observe that we can apply \Cref{neg_entropy_diss} because for all $i$ and almost every $r$, $\bar{u}(h_i(r)-,r)\geq \bar{u}(h_i(r)+,r)$. This is in fact true for any Lipschitz function $h_i$. In the case when $\bar{u}$ is in $BV$, this is well-known. When $\bar{u}$ is only known to verify the Strong Trace Property (\Cref{strong_trace_definition}), then this follows from Lemma 6 in \cite{Leger2011} (see also \cite[Lemma 5.2]{scalar_move_entire_solution}). Further, by assumption on the reconstruction process\footnote{For the $v_i$ corresponding to the ``front tracking'' shocks, see \Cref{do_not_sim_remark}.} we will also have $\hat{v}_{i}(x,t) > \hat{v}_{i+1}(x,t)$ for all $i$ and for all $(x,t)\in\mathbb{R}\times[0,T]$.

Consider now any $0<t<T$ and let $0<t_1<\cdots<t_J$ denote the times at which there is interaction between the $h_i$, with $t_0\coloneqq 0$ and $t_{J+1}\coloneqq t$. Then, we can write a telescoping sum 
\begin{multline}\label{almost_dissipation1017}
    \int\limits_{-S+st}^{S-st}\eta(\bar{u}(x,t)|{\hat{\psi}}(x,t))\,dx-\int\limits_{-S}^{S}\eta(\bar{u}(x,0)|{\hat{\psi}}(x,0))\,dx \\
    =\sum_{j=1}^{J+1}\Bigg[\int\limits_{-S+st_j}^{S-st_j}\eta(\bar{u}(x,t_j)|{\hat{\psi}}(x,t_j))\,dx-\int\limits_{-S+st_{j-1}}^{S-st_{j-1}}\eta(\bar{u}(x,t_{j-1})|{\hat{\psi}}(x,t_{j-1}))\,dx\Bigg]
    \end{multline}
    
 Then, from \eqref{dissipation_formula_psi1026_1} 
\begin{multline}\label{almost_dissipation1017_1}   
     \int\limits_{-S+st}^{S-st}\eta(\bar{u}(x,t)|{\hat{\psi}}(x,t))
     \,dx-\int\limits_{-S}^{S}\eta(\bar{u}(x,0)|{\hat{\psi}}(x,0))\,dx
     \\
     \leq
\int\limits_{0}^{t} \sum_{i} F_i^{+}(r)-F_{i}^{-}(r)\,dr
-\int\limits_{0}^{t} \int\limits_{-S+sr}^{S-sr}\Bigg[
\Bigg(\partial_x \bigg|_{(x,r)}\hspace{-.05in}
\eta'({\hat{\psi}})\Bigg) A(\bar{u}(x,r)|{\hat{\psi}}(x,r))
\\
+R_{{\hat{\psi}}}(x,t)\eta''({\hat{\psi}}(x,r))
\big[\bar{u}(x,r)-{\hat{\psi}}(x,r)\big]
\Bigg]\,dxdr,
\end{multline}
for all $t\in[0,T]$. Remark that we are only evaluating $\partial_x  \eta'({\hat{\psi}})$ away from the points $x=h_i(t)$ ($i=1,\ldots,N$), so we do not pick up any Dirac masses in the derivative. 

We move the first term on the right-hand side of \eqref{almost_dissipation1017_1} to the left-hand side and also use \eqref{control_Fs}, to get
\begin{equation}
    \begin{aligned}\label{dissipation1017}
     &  \int\limits_{0}^{t} \sum_{i} F_i^{-}(r)-F_{i}^{+}(r)\,dr
       \\
    & +   \int\limits_{-S+st}^{S-st}\eta(\bar{u}(x,t)|{\hat{\psi}}(x,t))\,dx-\int\limits_{-S}^{S}\eta(\bar{u}(x,0)|{\hat{\psi}}(x,0))\,dx \\
    \leq&
-\int\limits_{0}^{t}\bigg[\int\limits_{0}^{r} \sum_{i} F_i^{+}(r)-F_{i}^{-}(r)\,dr+ \int\limits_{-S+sr}^{S-sr}\Big[\bigg(\partial_x \bigg|_{(x,r)}\hspace{-.05in} \eta'({\hat{\psi}})\bigg) A(\bar{u}(x,r)|{\hat{\psi}}(x,r))
\\
&\qquad\qquad\qquad\qquad +R_{{\hat{\psi}}}(x,r)\eta''({\hat{\psi}}(x,r))\big[\bar{u}(x,r)-{\hat{\psi}}(x,r)\big]
\Big]\,dx\bigg]dr,
    \end{aligned}
\end{equation}
for all $t\in[0,T]$.

We now apply the Gronwall inequality to \eqref{dissipation1017}.

Recall that the relative flux $A(a|b)$ is locally quadratic in $a-b$.  Recall also \eqref{control_rel_entropy} and that $A(a|b)\geq 0$ due to the convexity of $A$. Thus, we only have to consider 
\begin{align}
\Bigg(\partial_x \bigg|_{(x,t)}\hspace{-.05in} \eta'({\hat{\psi}})\Bigg) A(\bar{u}(x,t)|{\hat{\psi}}(x,t))
\end{align}
when $\partial_x \bigg|_{(x,t)}\hspace{-.05in} \eta'({\hat{\psi}}) < 0$. 

The Gronwall and Young's inequality then give

\begin{multline}\label{result_gronwall1027}
\int\limits_{0}^{t} \sum_{i} F_i^{-}(r)-F_{i}^{+}(r)\,dr+\int\limits_{-S+st}^{S-st}\eta(\bar{u}(x,t)|{\hat{\psi}}(x,t))\,dx\\
\leq
C\Bigg(\int\limits_{-S}^{S}\eta(\bar{u}(x,0)|\hat{\psi}(x,0))\,dx +  \int\limits_{0}^{t}\int\limits_{-S+sr}^{S-sr}(R_{\hat{\psi}}(x,r))^2\,dxdr\Bigg)e^{C t},
\end{multline}
where the constant $C>0$ depends on $\max\set{\norm{\bar{u}}_{L^{\infty}},\norm{{\hat{\psi}}}_{L^{\infty}}}$, the  flux $A$, entropy $\eta$ and their derivatives. Furthermore, $C$ depends\footnote{Remark that if $\partial_x \bigg|_{(x,r)}\hspace{-.15in} \eta'({\hat{\psi}})\geq 0$, then we can drop the term $\bigg(\partial_x \bigg|_{(x,r)}\hspace{-.15in} \eta'({\hat{\psi}})\bigg) A(\bar{u}(x,r)|{\hat{\psi}}(x,r))$ from \eqref{almost_dissipation1017}.} on $\norm{\Big[\partial_x \bigg|_{(x,t)}\hspace{-.05in} \eta'({\hat{\psi}})\Big]_{-}}_{L^\infty}$, where $[\hspace{.03in}\cdot\hspace{.03in}]_{-}\coloneqq \min(0,\cdot)$. Note that by construction, we choose\footnote{Although, we remark that this choice is not strictly necessary for our arguments, and if $h_i(0)\neq\hat{h}_i(0)$, it will simply add a term to the Gronwall argument in \Cref{sec:def_control_shift} -- see \eqref{gronwall_0_7252020}.} that $h_i(0)=\hat{h}_i(0)$ (see \Cref{construction_section}) and thus ${\hat{\psi}}(\cdot,0)=\hat{u}(\cdot,0)$.

\subsection{Step 2A: calculation of dissipation due to shifting}

We now give control on $\abs{\hat{h}_i-h_i}$, but first we need to derive a key dissipation estimate. 

From \eqref{diss_neg_formula}, and the fact that the $h_i$ are generalized characteristics for $\bar{u}$,
we have for $i=1,\ldots,N$,
\begin{equation}
\begin{aligned}\label{control_shifts1}
F_i^{+}(t)-F_{i}^{-}(t)=&q(\bar{u}(h_i(t)+,t);{\hat{\psi}}(h_i(t)+,t))-q(\bar{u}(h_i(t)-,t);{\hat{\psi}}(h_i(t)-,t))
\\
&\hspace{.4in}-\dot{h}_i(t)\big(\eta(\bar{u}(h_i(t)+,t)|{\hat{\psi}}(h_i(t)+,t))-\eta(\bar{u}(h_i(t)-,t)|{\hat{\psi}}(h_i(t)-,t))\big)\\
&\hspace{-.7in}\leq - \frac{1}{12} \inf A'' \inf \eta''{\mathfrak{s}_i} \big((\bar{u}(h_i(t)+,t)-{\hat{\psi}}(h_i(t)+,t))^2+(\bar{u}(h_i(t)-,t)-{\hat{\psi}}(h_i(t)-,t))^2\big),
\end{aligned}
\end{equation}
where $\mathfrak{s}_i(t)>0$ is any lower bound for the shock size, i.e. any function of $t$ that verifies ${\hat{\psi}}(h_i(t)-,t)-{\hat{\psi}}(h_i(t)+,t)\geq\mathfrak{s}_i(t)$ for all $t$.

Then,
\begin{multline}\label{control_shifts2}
\big(\sigma(\bar{u}(h_i(t)+,t),\bar{u}(h_i(t)-,t))-\sigma({\hat{\psi}}(h_i(t)+,t),{\hat{\psi}}(h_i(t)-,t))\big)^2\\
\leq 2 \mathfrak{A} \big((\bar{u}(h_i(t)+,t)-{\hat{\psi}}(h_i(t)+,t))^2+(\bar{u}(h_i(t)-,t)-{\hat{\psi}}(h_i(t)-,t))^2\big),
\end{multline}
for a detailed proof of this, see \cite[p.~2516]{scalar_move_entire_solution}. Recall $\mathfrak{A}=\sup A''$.

Then, from \eqref{control_shifts1} and \eqref{control_shifts2}, we get
\begin{equation}
\begin{aligned}\label{control_shifts3}
&q(\bar{u}(h_i(t)+,t);{\hat{\psi}}(h_i(t)+,t))-q(\bar{u}(h_i(t)-,t);{\hat{\psi}}(h_i(t)-,t))
\\
&\hspace{.7in}-\dot{h}_i(t)\big(\eta(\bar{u}(h_i(t)+,t)|{\hat{\psi}}(h_i(t)+,t))-\eta(\bar{u}(h_i(t)-,t)|{\hat{\psi}}(h_i(t)-,t))\big)\\
&\leq -\frac{\inf A'' \inf \eta''}{24\mathfrak{A}} \mathfrak{s}_i(t)\big(\sigma(\bar{u}(h_i(t)+,t),\bar{u}(h_i(t)-,t))-\sigma({\hat{\psi}}(h_i(t)+,t),{\hat{\psi}}(h_i(t)-,t))\big)^2.
\end{aligned}
\end{equation}

Finally, use \eqref{control_shifts3}, \eqref{result_gronwall1027} and also remark that because $\eta(\cdot,\cdot)\geq 0 $, due to the convexity of $\eta$, we can drop the second term on the left-hand side of \eqref{result_gronwall1027}, to get
\begin{multline}\label{dissipation_formula_psi3}
\int\limits_{0}^{t}\sum_{i} \Bigg[{c{\mathfrak{s}_i}(r)}\big(\sigma(\bar{u}(h_i(r)+,r),\bar{u}(h_i(r)-,r))-\sigma({\hat{\psi}}(h_i(r)+,r),{\hat{\psi}}(h_i(r)-,r))\big)^2\Bigg]\,dr
\\
\leq
C\Bigg(\int\limits_{-S}^{S}\eta(\bar{u}(x,0)|\hat{\psi}(x,0))\,dx +  \int\limits_{0}^{t}\int\limits_{-S+sr}^{S-sr}(R_{\hat{\psi}}(x,r))^2\,dxdr\Bigg)e^{C t},
\end{multline}
with 
\begin{align}\label{def_c}
c\coloneqq \frac{\inf A'' \inf \eta''}{24\mathfrak{A}}.
\end{align}

Remark that following the definition of the set $\mathfrak{l}_j$, the sum $\sum_i [\ldots]$ in \eqref{dissipation_formula_psi3} is only over all those curves, at time $t=r$, that are distinct in $\hat \psi$ --  we are not allowed to count curves that have already collided multiple times.

In particular, for $i=1,\ldots,N$, we have 
\begin{multline}\label{dissipation_formula_psi4}
\int\limits_{0}^{t} \Bigg[{c{\mathfrak{s}_i}(r)}\big(\sigma(\bar{u}(h_i(r)+,r),\bar{u}(h_i(r)-,r))-\sigma({\hat{\psi}}(h_i(r)+,r),{\hat{\psi}}(h_i(r)-,r))\big)^2\Bigg]\,dr
\\
\leq
C\Bigg(\int\limits_{-S}^{S}\eta(\bar{u}(x,0)|\hat{\psi}(x,0))\,dx +  \int\limits_{0}^{t}\int\limits_{-S+sr}^{S-sr}(R_{\hat{\psi}}(x,r))^2\,dxdr\Bigg)e^{C t}.
\end{multline}

\subsection{Step 2B: proof that \eqref{def_control_shift} is upper bound}\label{sec:def_control_shift}

We gain control on the shock positions via a Gronwall argument.

Let $\hat{u}$ be from the context of the Main Theorem (\Cref{main_theorem}), with Lipschitz-continuous curves of discontinuity $\hat{h}_i(t)$. Note that $\dot{\hat{h}}_i$ is not necessarily the Rankine-Hugoniot speed of the discontinuity 
\begin{align}
    (\hat{u}(\hat{h}_i(t)+,t),\hat{u}(\hat{h}_i(t)-,t))
\end{align} 
due to numerical error.

We now give control on $h_i-\hat{h}_i$. Let $\hat{L}_i(t)\in\mathbb{N}$ and $\hat{R}_i(t)\in\mathbb{N}$ denote the integers corresponding to the $\hat{v}_i$ which are to the left and right, respectively, of the discontinuity $\hat{h}_i$ in $\hat{u}$ at time $t$.  Likewise, let $L_i(t)\in\mathbb{N}$ and $R_i(t)\in\mathbb{N}$ denote the integers corresponding to the $\hat{v}_i$ which are to the left and right, respectively, of the discontinuity ${h}_i$ in ${\hat{\psi}}$ at time $t$. Then, we can control $h_i-\hat{h}_i$ using the following Gronwall argument:
\begin{equation}
\begin{aligned}\label{control_shifts_Gronwall7242020_1}
\dot{\hat{h}}_i(t)-\dot{h}_i(t)=\Big(\dot{\hat{h}}_i(t)-\sigma(\hat{v}_{\hat{L}_i(t)}(\hat{h}_i(t),t),\hat{v}_{\hat{R}_i(t)}(\hat{h}_i(t),t))\Big)\qquad \qquad\qquad \qquad\qquad 
\\
+\Big(\sigma(\hat{v}_{\hat{L}_i(t)}(\hat{h}_i(t),t),\hat{v}_{\hat{R}_i(t)}(\hat{h}_i(t),t)) -\sigma(\hat{v}_{{L}_i(t)}(\hat{h}_i(t),t),\hat{v}_{{R}_i(t)}(\hat{h}_i(t),t)) \Big)
\\
+\Big(\sigma(\hat{v}_{{L}_i(t)}(\hat{h}_i(t),t),\hat{v}_{{R}_i(t)}(\hat{h}_i(t),t)) -\sigma(\hat{v}_{{L}_i(t)}(h_i(t),t),\hat{v}_{{R}_i(t)}(h_i(t),t)) \Big)
\\
+\Big(\sigma(\hat{v}_{{L}_i(t)}(h_i(t),t),\hat{v}_{{R}_i(t)}(h_i(t),t))-\sigma(\bar{u}(h_i(t)+,t),\bar{u}(h_i(t)-,t))\Big)
\end{aligned}
\end{equation}

Then, from \eqref{control_shifts_Gronwall7242020_1}, we get
\begin{equation}
\begin{aligned}
\abs{\dot{\hat{h}}_i(t)-\dot{h}_i(t)}\leq
\abs{\dot{\hat{h}}_i(t)-\sigma(\hat{v}_{\hat{L}_i(t)}(\hat{h}_i(t),t),\hat{v}_{\hat{R}_i(t)}(\hat{h}_i(t),t))}\qquad \qquad\qquad \qquad\qquad 
\\
+\abs{\sigma(\hat{v}_{\hat{L}_i(t)}(\hat{h}_i(t),t),\hat{v}_{\hat{R}_i(t)}(\hat{h}_i(t),t)) -\sigma(\hat{v}_{{L}_i(t)}(\hat{h}_i(t),t),\hat{v}_{{R}_i(t)}(\hat{h}_i(t),t)) }
\\
+\abs{\sigma(\hat{v}_{{L}_i(t)}(\hat{h}_i(t),t),\hat{v}_{{R}_i(t)}(\hat{h}_i(t),t)) -\sigma(\hat{v}_{{L}_i(t)}(h_i(t),t),\hat{v}_{{R}_i(t)}(h_i(t),t)) }
\\
+\abs{\sigma(\hat{v}_{{L}_i(t)}(h_i(t),t),\hat{v}_{{R}_i(t)}(h_i(t),t))-\sigma(\bar{u}(h_i(t)+,t),\bar{u}(h_i(t)-,t))}
\end{aligned}
\end{equation}

from which we infer
\begin{equation}
\begin{aligned}
\abs{\dot{\hat{h}}_i(t)-\dot{h}_i(t)}
\leq 
\abs{\dot{\hat{h}}_i(t)-\sigma(\hat{v}_{\hat{L}_i(t)}(\hat{h}_i(t),t),\hat{v}_{\hat{R}_i(t)}(\hat{h}_i(t),t))}\qquad \qquad\qquad \qquad
\\
+\abs{\sigma(\hat{v}_{\hat{L}_i(t)}(\hat{h}_i(t),t),\hat{v}_{\hat{R}_i(t)}(\hat{h}_i(t),t)) -\sigma(\hat{v}_{{L}_i(t)}(\hat{h}_i(t),t),\hat{v}_{{R}_i(t)}(\hat{h}_i(t),t)) }
\\
+\abs{\hat{h}_i(t)-h_i(t)}\mathfrak{A} \max\{\text{Lip}[\hat{v}_{{L}_i(t)}],\text{Lip}[\hat{v}_{{R}_i(t)}]\}
\\
+\abs{\sigma(\hat{v}_{{L}_i(t)}(h_i(t),t),\hat{v}_{{R}_i(t)}(h_i(t),t))-\sigma(\bar{u}(h_i(t)+,t),\bar{u}(h_i(t)-,t))}
\end{aligned}
\end{equation}

We then apply Gronwall's inequality, which yields, for $0<t<T$,
\begin{multline}\label{gronwall_0_7252020}
\abs{h_i(t)-\hat{h}_i(t)}\leq 
\zeta(t,0)\Bigg(\abs{h_i(0)-\hat{h}_i(0)} \\
+ \int\limits_0^t 
\zeta(0,s)\Bigg[\abs{\dot{\hat{h}}_i(s)-\sigma(\hat{v}_{\hat{L}_i(s)}(\hat{h}_i(s),s),\hat{v}_{\hat{R}_i(s)}(\hat{h}_i(s),s))}
\\
+\abs{\sigma(\hat{v}_{\hat{L}_i(s)}(\hat{h}_i(s),s),\hat{v}_{\hat{R}_i(s)}(\hat{h}_i(s),s)) -\sigma(\hat{v}_{{L}_i(s)}(\hat{h}_i(s),s),\hat{v}_{{R}_i(s)}(\hat{h}_i(s),s)) }
\\
+\abs{\sigma(\hat{v}_{{L}_i(s)}(h_i(s),s),\hat{v}_{{R}_i(s)}(h_i(s),s))-\sigma(\bar{u}(h_i(s)+,s),\bar{u}(h_i(s)-,s))}\Bigg]\,ds\Bigg).
\end{multline}

Note that to use the Gronwall inequality, we utilize 
\begin{align}
    \abs{\hat{h}_i(t)-h_i(t)}\leq \abs{\hat{h}_i(0)-h_i(0)}+\int\limits_0^t \abs{\dot{\hat{h}}_i(s)-\dot{h}_i(s)}\,dt,
\end{align}
which follows from the Fundamental Theorem of Calculus for $W^{1,1}_{\text{loc}}$ functions.

Moreover, remark that in \eqref{gronwall_0_7252020}, using the $\zeta$ function (see \eqref{zeta_def}) is a rough upper bound. In fact, within the $\zeta$ function, we only need to consider the Lipschitz constants of the $\hat v_i$ which might possibly be to the left or right of $h_i$ in $\hat \psi$ at time $s$.

Then \eqref{gronwall_0_7252020} can be rewritten as
\begin{multline}\label{gronwall_0_7252020_1}
\abs{h_i(t)-\hat{h}_i(t)}\leq 
\zeta(t,0)\abs{h_i(0)-\hat{h}_i(0)}
\\
+\int\limits_0^t 
\zeta(t,s)\Bigg[\abs{\Big(\dot{\hat{h}}_i(s)-\sigma(\hat{v}_{\hat{L}_i(s)}(\hat{h}_i(s),s),\hat{v}_{\hat{R}_i(s)}(\hat{h}_i(s),s))\Big)}
\\
+\abs{\sigma(\hat{v}_{\hat{L}_i(s)}(\hat{h}_i(s),s),\hat{v}_{\hat{R}_i(s)}(\hat{h}_i(s),s)) -\sigma(\hat{v}_{{L}_i(s)}(\hat{h}_i(s),s),\hat{v}_{{R}_i(s)}(\hat{h}_i(s),s))}
\\
+\abs{\sigma(\hat{v}_{{L}_i(s)}(h_i(s),s),\hat{v}_{{R}_i(s)}(h_i(s),s))-\sigma(\bar{u}(h_i(s)+,s),\bar{u}(h_i(s)-,s))}\Bigg]\,ds.
\end{multline}

Then, remark that from $L^2$-$L^2$ Hölder duality, we get,
\begin{multline}\label{gronwall_0_7252020_4}
    \abs{h_i(t)-\hat{h}_i(t)}\leq
\zeta(t,0)\abs{h_i(0)-\hat{h}_i(0)}
\\
+\int\limits_0^t 
\zeta(t,s)\Bigg[\abs{\dot{\hat{h}}_i(s)-\sigma(\hat{v}_{\hat{L}_i(s)}(\hat{h}_i(s),s),\hat{v}_{\hat{R}_i(s)}(\hat{h}_i(s),s))}
\\
+\abs{\sigma(\hat{v}_{\hat{L}_i(s)}(\hat{h}_i(s),s),\hat{v}_{\hat{R}_i(s)}(\hat{h}_i(s),s)) -\sigma(\hat{v}_{{L}_i(s)}(\hat{h}_i(s),s),\hat{v}_{{R}_i(s)}(\hat{h}_i(s),s)) } \Bigg]\, ds
\\
+
\norm{\frac{
\zeta(t,\cdot)}{\sqrt{\mathfrak{s}_i(\cdot)}} }_{L^2(0,t)}
\norm{\sqrt{\mathfrak{s}_i(\cdot)}\Big(\sigma(\hat{v}_{{L}_i(\cdot)}(h_i(\cdot),\cdot),\hat{v}_{{R}_i(\cdot)}(h_i(\cdot),\cdot))-\sigma(\bar{u}(h_i(\cdot)+,\cdot),\bar{u}(h_i(\cdot)-,\cdot))\Big)}_{L^2(0,t)},
\end{multline}
where we recall $\mathfrak{s}_i(t)>0$ is a lower bound for the size of the $i$-th shock.

Due to the definition of $L_i$ and $R_i$ we may use the dissipation estimate \eqref{dissipation_formula_psi4}, which implies
\begin{multline}\label{shift_gronwall_1027}
    \abs{h_i(t)-\hat{h}_i(t)}\leq
\zeta(t,0)\abs{h_i(0)-\hat{h}_i(0)}
\\
+\int\limits_0^t 
\zeta(t,s)\Bigg[\abs{\dot{\hat{h}}_i(s)-\sigma(\hat{v}_{\hat{L}_i(s)}(\hat{h}_i(s),s),\hat{v}_{\hat{R}_i(s)}(\hat{h}_i(s),s))}
\\
+\abs{\sigma(\hat{v}_{\hat{L}_i(s)}(\hat{h}_i(s),s),\hat{v}_{\hat{R}_i(s)}(\hat{h}_i(s),s)) -\sigma(\hat{v}_{{L}_i(s)}(\hat{h}_i(s),s),\hat{v}_{{R}_i(s)}(\hat{h}_i(s),s))} \Bigg]\, ds
\\
+
\norm{\frac{
\zeta(t,\cdot)}{\sqrt{\mathfrak{s}_i(\cdot)}} }_{L^2(0,t)}
 \Bigg[ Ce^{C t }\Bigg(\int\limits_{-S}^{S}\eta(\bar{u}(x,0)|\hat{\psi}(x,0))\,dx + \int\limits_{0}^{t}\int\limits_{-S+sr}^{S-sr}(R_{\hat{\psi}}(x,r))^2\,dxdr\Bigg) \Bigg]^{\frac{1}{2}}.
\end{multline}

Finally, \eqref{shift_gronwall_1027} gives us control on the  difference $\abs{h_i(t)-\hat{h}_i(t)}$. This yields the formula \eqref{def_control_shift}.

\section{Proof of \Cref{conv_thm}}\label{sec:proof_conv_thm}
\Cref{conv_thm} follows from the formulas for $\Delta_i$, see \eqref{def_control_shift}, \eqref{eq:deltabs}, and \Cref{sec:triangles}. This includes the logic for determining when shocks have collided  in $\hat \psi$ (see \Cref{sec:when_collided}). We also use the formulas for $\Gamma_d(t)$ (see \eqref{eq:Bressan}) and  $\Upsilon_d(t)$ (see in\eqref{eq:psipsift}).
Finally, we use that the function $\bar{u}^0$ is fixed, and $\bar{u}^0$ is broken into only a finite number $\bar N+1$ of intervals such that on each interval, $\bar{u}^0$ is either rapidly decreasing or nearly non-decreasing (this is the $\bar N$ in the context of \Cref{main_theorem}).

\section{Expected convergence of estimator}\label{sec:op-ed}

Let us discuss how we expect our error estimator to scale for mesh refinement, i.e. in the case $h, \delta \rightarrow 0$  and how $h, \delta$ should be related. Here, $\delta$ is from the context of the Main Theorem (\Cref{main_theorem}) and $h$ is the mesh width.

 In order to keep the discussion concise let us discuss the case of a dG scheme with polynomials of degree $0$ and an upwind type numerical flux, say Engquist-Osher, and an explicit SSP Runge-Kutta time discretization with a time step chosen according to a suitable CFL condition. In this case, ($L^2$-norms of) residuals are expected to be  $\mathcal{O}(h)$ provided the corresponding exact solution is $C^1$. This is the case if we can build extensions where kinks are very unlikely to be revealed in $\hat\psi$. In particular, this is the case if we only have ``large'' shocks.  This situation is discussed in \Cref{sec:crl}. Remark that by ``kink'', we mean a point that is not differentiable (in space) because it has differing one-sided derivatives.
 
 If the exact solution contains kinks (which are potentially revealed) then ($L^2$-norms of) residuals are expected to be  $\mathcal{O}(h^{3/4})$. This situation is discussed in \Cref{sec:crs}.

Let us remark, that we can only expect the error estimator to converge, for $\delta , h \rightarrow 0$, if the underlying scheme (as described above) is reasonable. For a scheme that is unstable or fails to converge the error estimator cannot converge. 

We stress that our error estimate is valid even if total variation and oscillation of the numerical solution is large, but, obviously, only convergent schemes can lead to convergent estimators. We will also assume that time step sizes are chosen proportional to $h$, which makes sense from the point of view of CFL conditions.

\subsection{``Large'' shocks}\label{sec:crl}
Let us focus, in this section, on the case that $\bar u^0$ only has nearly non-decreasing parts. In this case $\delta$ does not appear and there is a number of discontinuities in $\hat u$ that is independent of $\delta$.

The analysis and the numerical experiments in \cite{GiesselmannMakridakisPryer, DednerGiesselmann}
show that we can expect the residuals $R_i$ (see \eqref{system-numerical}) to have $L^2$ norms that are $\mathcal{O}(h)$ and the same is true for the initial data approximation error.  
Thus, until the first time that two shocks might have collided, we expect 
$C_i$ in \eqref{def_control_shift} to be  $\mathcal{O}(h)$, whereas $B_i$ is zero during this time and the size of $A_i$ 
depends on the Runge-Kutta scheme used for solving \eqref{ODEfor_u_hat} -- it can be easily made {much} smaller than $\mathcal{O}(h)$.

Once there is uncertainty of whether two shocks have merged, $B_i$ is of order one and the question is how long this time interval of uncertainty is. Since the shocks in question have a speed difference that is independent of $h$ and the position error of each is  proportional to $h$ we expect the interval of uncertainty to be proportional to $h$.

Thus, for each possible collision, the uncertainty in $\Delta_i$ increases by a term proportional to $h$.
Since the number of possible collisions is finite and independent of $h$ we expect $\Delta_i$ to be proportional to $h$.
Inserting this into \eqref{est:hphu}   leads to a bound for $\| \hat \psi - \hat u\|_{L^\infty(0,T; L^1(\mathbb{R}))}$ of order $h$, since $\mathcal{B}(t)$ (in the context of the Main Theorem \Cref{main_theorem}) consists of a number of intervals 
 each of which has a size proportional to $h$ and the function that is integrated has size independent of $h$. Moreover, since the initial data approximation error and the residuals are of order $h$
  the bound for $\| \hat \psi - \bar u\|_{L^\infty(0,T; L^2(\mathbb{R}))}$ from \eqref{control_l21206} is of order $h$.
Thus, since on a compact domain the $L^2$ norm controls the $L^1$ norm, the overall bound for $\| \hat u - \bar u\|_{L^\infty(0,T; L^1(\mathbb{R}))}$ (see \eqref{est:uhu}) is also of order $h$, which is optimal from an approximation theory viewpoint, although one might hope to obtain such estimates also in $L^p$-norms with $1 < p \leq 2$, cf. \cite{ZhangShu}. 
In the scenario where $\bar u^0$ has no rapidly decreasing parts, we indeed also obtain error estimates in the  $L^p(-S - st, S+st)$ norm for $1 < p \leq 2$ but the position uncertainty of the shocks leads  to (sub-optimal) error bounds of order $h^{\tfrac{1}{p}}$.
Note that the a posteriori error bound for the $L^\infty$-in-time $L^1$-in-space norm we obtain is $\mathcal{O}(h)$ which is much smaller than the $\mathcal{O}(h^{\tfrac12})$ bound obtained by Kruzhkov-type error estimates or the $\mathcal{O}(h^{\tfrac13})$ obtained in \cite{Bressan2020}. This comes at the price of significantly modifying the numerical scheme, though.

\subsection{``Front tracking'' shocks}\label{sec:crs}

If there are rapidly decreasing parts of $\bar u^0$ we have two smallness parameters $\delta, h$.
Compared to the only ``large'' shocks scenario discussed above we also have additional error estimator components: uncertainty of positions of ``boundary'' shocks and estimates for the $L^1$ error in the front tracking regions with ``front tracking'' shocks coming from parts where $\bar u^0$ is rapidly decreasing (see \Cref{shocks_fig}). These $L^1$ estimates are based on \eqref{eq:Bressan} and \eqref{eq:psipsift}.

The initial data approximation error $(\int \eta(\bar u(\cdot,0)| \hat \psi(\cdot,0)) dx)^{\frac{1}{2}}$ 
is expected to be  
\begin{align*}
    \mathcal{O}(\sqrt{\delta^2 + h^2})
\end{align*}
but not $o(\sqrt{\delta^2 + h^2})$.
In contrast, the corresponding term in \eqref{eq:velerr}, using $\hat \psi_{fine}$ instead of $\hat \psi$, is $\mathcal{O}(\sqrt{\delta^3 + h^2})$ since all the $\mathfrak{s}_\star$ are proportional to $\delta$ and we retain $h^2$ for the nearly non-decreasing pieces. 
Since the extensions of the initial data, from \Cref{ext_section}, contain (a finite number) of kinks (nondifferentiable points) which are likely to be revealed in $\hat\psi$, we expect the squared space-time $L^2$ norm of the residual to be  $\mathcal{O}(h^{3/2})$ but not $o(h^{3/2})$ for both $\hat \psi$ and $\hat \psi_{fine}$. 
Thus, the right-hand side of \eqref{eq:velerr} is
\[ \mathcal{O}\Big(\frac{1}{\sqrt{\delta}} (\delta^3 + h^2 + h^{3/2})^{\frac12} + \delta\Big) \]
but not smaller, in general, and we see that the optimal scaling for $\delta$ is $\delta \sim \sqrt{h}$
leading to $\Upsilon_\alpha \sim \delta \sim \sqrt{h}$ for all $\alpha$.
Using the facts that $\Upsilon_\alpha \sim \delta$ for all $\alpha$ and that the initial data approximation error is $\mathcal{O}(\sqrt{\delta^2 + h^2})$ and that the squared space-time $L^2$ norm of the residual is $\mathcal{O}(h^{3/2})$, we can infer the scaling of $\Gamma$ from  \eqref{eq:Bressan}
\[ 
\Gamma = \mathcal{O}\left( \sqrt{C + \frac{\Upsilon}{\delta} } \sqrt{ \delta^2 + h^2 + h^{3/2} } + \frac{\Upsilon^2}{\delta} + 
(C + \frac{\Upsilon}{\delta} )\Upsilon \right)
=\mathcal{O}(\delta) \sim \mathcal{O}(\sqrt{h})
\]
and finally $\Delta_{\text{inner}} \sim \sqrt{\Gamma}  \sim \sqrt{\delta} \sim h^{1/4}$ (see \eqref{eq:puis}). Here $C>0$ is a constant and  $\Upsilon$ denotes the maximum of the $\Upsilon_\alpha$ coming from the corresponding  rapidly decreasing piece.


All in all, the position uncertainty of ``boundary'' shocks is expected to be $\mathcal{O}(h^{1/4})$. The size of $C_i$ (see \eqref{need_supremum_of_this_too}) in position estimates of ``large'' shocks is $\mathcal{O}(\sqrt{\delta^2 + h^2})= \mathcal{O}(h^{1/2})$, but not smaller in general.

In contrast, 
$A_i$ stays small. In cases where ``large'' shocks might have interacted with ``boundary'' shocks the $B_i$ term for these shocks is expected to be of order $\mathcal{O}(h^{1/4})$ since the length of time intervals of uncertainty whether the collisions have taken place is $\mathcal{O}(h^{1/4})$ as it is dictated by the position uncertainty of the shock positions.  An analogous argument holds for collision of two ``boundary'' shocks originating from the same rapidly decreasing piece. The minimal velocity difference of these shocks is order $1$ and the position uncertainty of both is $\mathcal{O}(h^{1/4})$, thus the time interval of uncertainty whether the collision has happened is $\mathcal{O}(h^{1/4})$ as is the resultant additional position uncertainty (see \Cref{left_right_bdy}). The case of two ``boundary'' shocks separated by a $\hat v_i$ corresponding to a nearly non-decreasing interval of $\bar{u}^0$, and then colliding, is similar, with again a resultant additional position uncertainty of $\mathcal{O}(h^{1/4})$ See \Cref{nnd_separated}.

In any case, the uncertainty of ``boundary'' shock positions dominates all other terms in the $\sup_{t \in [0,T]} \| \bar u(\cdot,t) - \hat u(\cdot,t)\|_{L^1(-S +s t, S-st)
}$ error estimate such that we expect \[\sup_{t \in [0,T]} \| \bar u(\cdot,t) - \hat u(\cdot,t)\|_{L^1(-S +s t, S-st)}=
\mathcal{O}(h^{1/4}).\]

Note that this is a rather low convergence rate. Indeed, it is less than what is obtained in \cite{Bressan2020} and \cite{GiesselmannSikstel2023} for systems; and using Kruzhkov-type estimates for scalar problems.
Part of the reason for this is the suboptimal rate $h^{3/2}$ for the squared $L^2$ norm of the residuals, that is caused by the exact $v_i$ having kinks. In the case with only ``large'' shocks we could build the extensions in such a way that kinks are not seen (for $h$ small enough). This is not possible, if there are transitions from nearly non-decreasing to rapidly decreasing pieces in $\bar u^0$ without any jump at a point $x_i$, e.g. $\bar u^0$ is nearly non-decreasing on $(x_{i-1},x_i)$ and rapidly decreasing on $(x_{i},x_{i+1})$ and $\bar{u}^0(x_i-)=\bar{u}^0(x_i+)$. In this case, then there is only a very short area (with length proportional to $\delta$) on which we can extend the nearly non-decreasing piece without prescribing the slope $M$ - usually this piece is so short that is is revealed after a short simulation time. See the construction in \Cref{ext_section} and \eqref{slopeM}. Instead of having a kink in this piece, we could use a quadratic polynomial to build a $C^1$ extension but the $C^2$ norm of this extension would have to be proportional to $\delta^{-1}$. This extension, would improve the rate of the squared $L^2$ norm of the residual -- but not to $h^2$. Thus, we have opted for the simpler extensions including kinks.

One way to improve the rate of the squared $L^2$ norm of the residual  to $h^2$ would be to use numerical schemes on meshes that move such that the kink is aligned with some cell boundary for all times (using the fact that each extended initial data, following \Cref{ext_section}, has at most one or two kinks likely to be revealed and their positions can be computed for all times). Redoing the above computations in this case would suggest to choose $\delta \sim h^{2/3}$ and lead to an $\sup_{t \in [0,T]} \| \bar u(\cdot,t) - \hat u(\cdot,t)\|_{L^1 (-S +s t, S-st)}$ error estimator that is $\mathcal{O}(h^{1/3})$, i.e. with the same rate as the estimates in \cite{Bressan2020} and \cite{GiesselmannSikstel2023}. In particular, this coincides with the scaling of a posteriori error bounds obtained by Bressan and co-workers \cite{Bressan2020}.

\section{Numerical experiments}\label{sec:numerical_experiments}


We have implemented our error estimator in MATLAB and carried out several numerical experiments in order to experimentally asses its scaling behaviour, i.e. to check whether it scales as discussed in \Cref{sec:op-ed}. 
Our experiments are carried out for Burgers equation and use a first-order piece-wise constant finite volume scheme, with explicit Euler time-stepping and upwind numerical flux. Our code is available at 
\begin{center}
\url{https://git-ce.rwth-aachen.de/jan.giesselmann/shiftsshocksaposteriori.git}
\end{center}
The whole purpose of the code is to provide validation of the scaling; it is not optimized in any way. In particular, the preprocessing of the initial data, i.e., its decomposition into nearly non-decreasing and rapidly decreasing parts, is not automated.

We have conducted numerical experiments for two initial data: One in which only ``large'' shocks occur and a second one in which a rapidly decreasing piece forms a shock during the simulation time.  

Before we present the experiments, let us mention one trick we used to slightly decrease the computational costs of the code:
Our error estimator contains several terms in which integrals are multiplied by exponential functions, and approximating these integrals by quadrature in each time step is inconvenient. Instead, we compute these terms iteratively based on their value  in the previous time step which is simplified by the fact that the integrands are constant in each time step. Indeed, for any two non-negative sequences $\{\alpha_k\}_{k \in \mathbb{N}},\ \{\beta_k\}_{k \in \mathbb{N}}$, the following  holds:
\begin{equation}
     F_{i} :=
    \sum_{j =0}^{i} \alpha_j \Delta t \exp\Big(\sum_{k=j}^{i} \beta_k \Delta t\Big)
\end{equation}
 satisfies
 \begin{equation}
    F_{i+1}    = F_i \exp\Big(\beta_{i+1} \Delta t\Big)
 +  \alpha_{i+1} \Delta t \exp\Big(\beta_{i+1} \Delta t\Big).
 \end{equation}

\subsection{Experiment with ``large'' shocks only}\label{sec:numls}
In this experiment the computational space-time domain is $[0,1]\times[0,.3]$ and the initial data are given by 
\begin{equation}\label{experiment:large}
    \bar u^0(x)= \left\{\begin{array}{ccc}
        3 & \text{if} & x < 0.25\\
        1 + 2x & \text{if} & 0.25 < x < 0.5\\
        x &\text{if} & 0.5 < x < 0.625\\
        0 & \text{if} & x > 0.625
    \end{array}
    \right.
\end{equation}

In \Cref{tab:1}, we report the upper bounds for position errors, $\sup_{t \in [0,T]} \|\bar u (\cdot, t) - \hat \psi(\cdot, t)\|_{L^2}$, and $\sup_{t \in [0,T]}\|\bar u (\cdot, t) - \hat u(\cdot, t)\|_{L^1}$ provided by our error estimator as well as $\| \hat u_{\text{fine}} (\cdot, T) - \hat u(\cdot, T)\|_{L^1}$, where  $\hat u_{\text{fine}}$ is a finite volume solution on a much finer mesh, and we use this as an approximation of the ``true'' value of $\|\bar u (\cdot, T) - \hat u(\cdot, T)\|_{L^1}$.

In \Cref{fig:snapshots} we show snapshots of the numerical solution at times 
\begin{align*}
    t\in \{ 0.05, 0.1, 0.15, 0.2, 0.25, 0.3\}.
\end{align*}
In the areas where it is unclear which $\hat v_i$ is revealed in $\hat \psi$, we color in red the branches of the $\hat v_i$ that are not revealed in $\hat u$ but that might be revealed in $\hat \psi$.
We observe that, as expected, all monitored quantities converge with order $\mathcal{O}(h)$. 

For each quantity in an even column we display its \textit{Experimental order of Convergence} (EoC) in the adjacent column to the right.
For any quantity $E(h)$ that depends on the mesh width $h$ we define the EoC for two consecutive mesh widths $h_1$ and $h_2$ as
\[ 
\operatorname{EoC}_{12}= \frac{\log \left( \frac{E(h_1)}{E(h_2)}\right)}{\log\left( \frac{h_1}{h_2} \right)},
\]
the idea being that if $E(h)=C h^p$ then $\operatorname{EoC}=p$.

\begin{table}[!htb]
  \begin{center}
    \begin{tabular}{c|c|c|c|c|c|c|c|c|c|c}
      $\# cells$ &
      $\max_i \Delta_i$ &EoC &
      $\|\bar u - \hat \psi\|_{L^2}\! $  & EoC &
       $\|\bar u - \hat u\|_{L^1}\! $ & EoC &
      $\| \hat u_{\text{fine}}  - \hat u\|_{L^1}$ & EoC  
      \\
      \hline
     3200 & $4.13 \cdot 10^{-3}$ & ---  & $3.26 \cdot 10^{-4}$  & ---  & $9.46\cdot 10^{-3}$ & ---  & $1.44\cdot 10^{-4}$   & ---    \\
     6400 & $2.06 \cdot 10^{-3}$ & 1.0 & $1.63 \cdot 10^{-4}$  & 1.0 & $4.73\cdot 10^{-3}$ & 1.0 &  $8.21\cdot 10^{-5}$  & 0.82  \\
    12800 & $1.03 \cdot 10^{-3}$ & 1.0 &  $8.14 \cdot 10^{-5}$ & 1.0 & $2.35 \cdot 10^{-3}$  & 1.0 & $3.59\cdot 10^{-5}$  & 1.19  \\
    25600 & $5.14 \cdot 10^{-4}$   & 1.0 &  $4.07 \cdot 10^{-5}$   & 1.0 & $1.18 \cdot 10^{-3}$   & 1.0  &   $2.05\cdot 10^{-5}$     &   0.81   \\
    \end{tabular}
  \end{center}
  \caption{Error estimator results for the example with ``large'' shocks only: estimates on $\|\bar u - \hat u\|_{L^1}\! $ and  $\|\bar u - \hat \psi\|_{L^2}\! $. We also include the values of $\| \hat u- \hat{u}_{\text{fine}}\|_{L^1} $.}
  \label{tab:1}
\end{table}
\begin{figure}[ht!]
\caption[]
          {
            \label{fig:snapshots} 
            From initial data \eqref{experiment:large}, we have snapshots of the numerical solution, with 800 cells, at different times. The parts of the $\hat v_i$ which are not revealed in $\hat u$ but may be revealed in $\hat\psi$ are colored in red.}
          \begin{center}
            \begin{subfigure}[b]{0.30\linewidth}
              \includegraphics[width=\linewidth]
                              {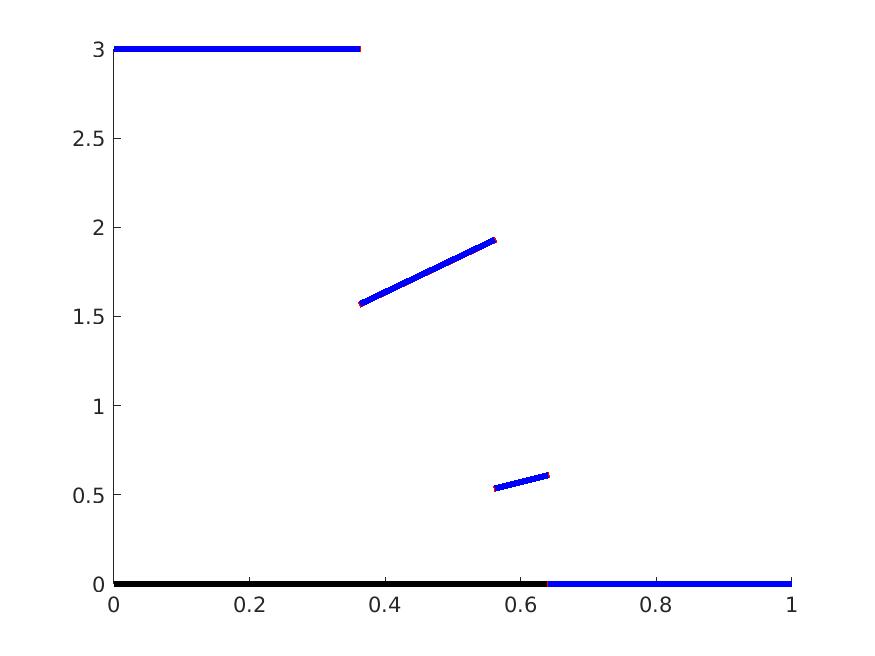}
              \caption{$t=0.05$}
            \end{subfigure}
            \begin{subfigure}[b]{0.30\linewidth}
              \includegraphics[width=\linewidth]
                              {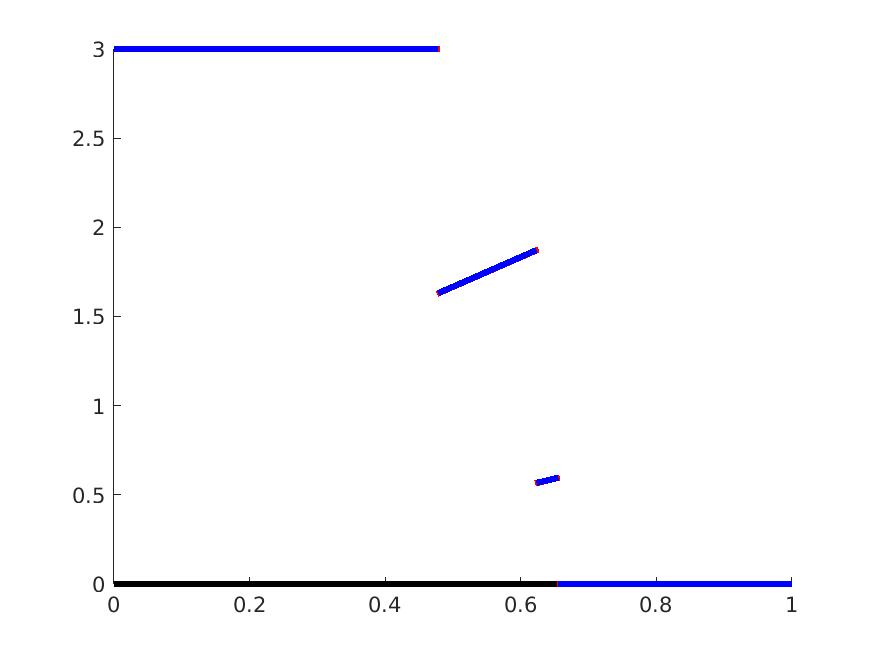}
              \caption{$t=0.1$}
            \end{subfigure}
            \begin{subfigure}[b]{0.30\linewidth}
              \includegraphics[width=\linewidth]
                              {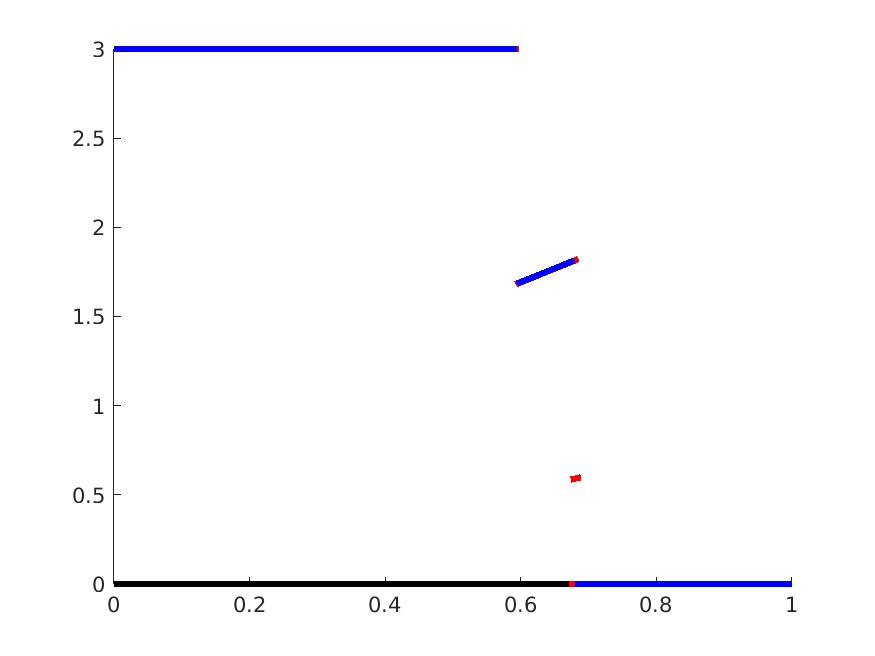}
              \caption{$t=0.15$}
            \end{subfigure}

            \begin{subfigure}[b]{0.30\linewidth}
              \includegraphics[width=\linewidth]
                              {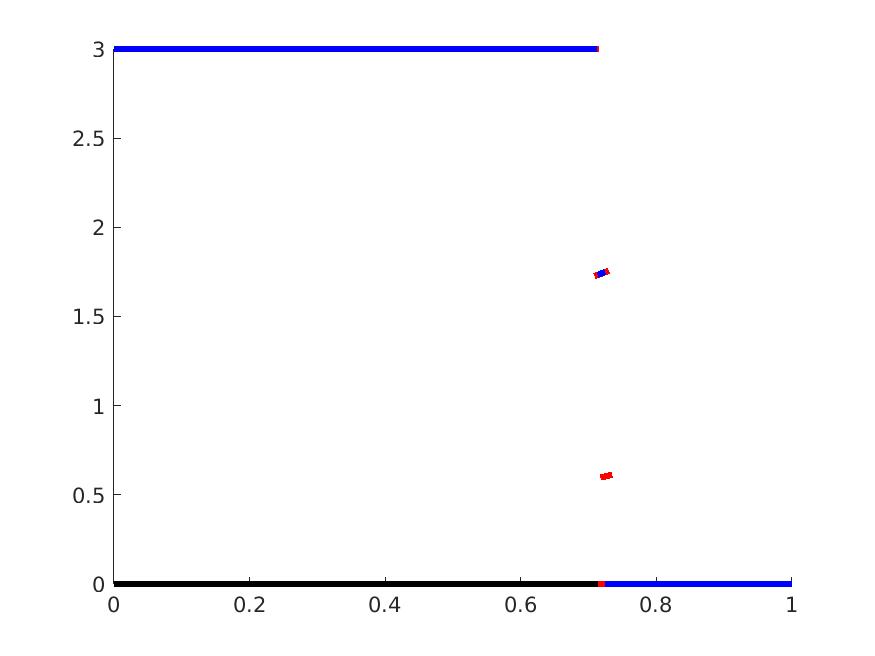}
              \caption{$t=0.2$}
            \end{subfigure}
            \begin{subfigure}[b]{0.30\linewidth}
              \includegraphics[width=\linewidth]
                              {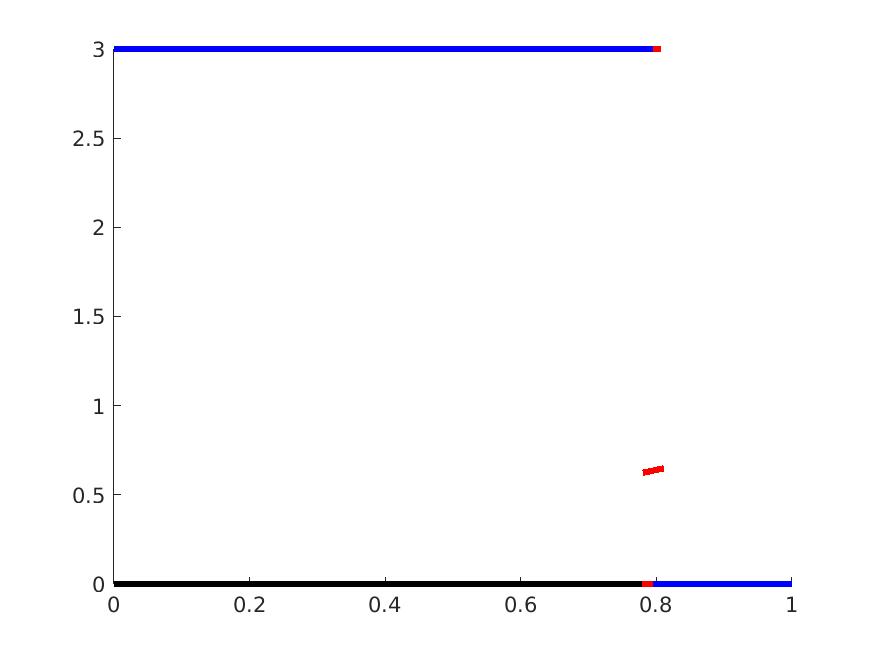}
              \caption{$t=0.25$}
            \end{subfigure}
            \begin{subfigure}[b]{0.30\linewidth}
              \includegraphics[width=\linewidth]
                              {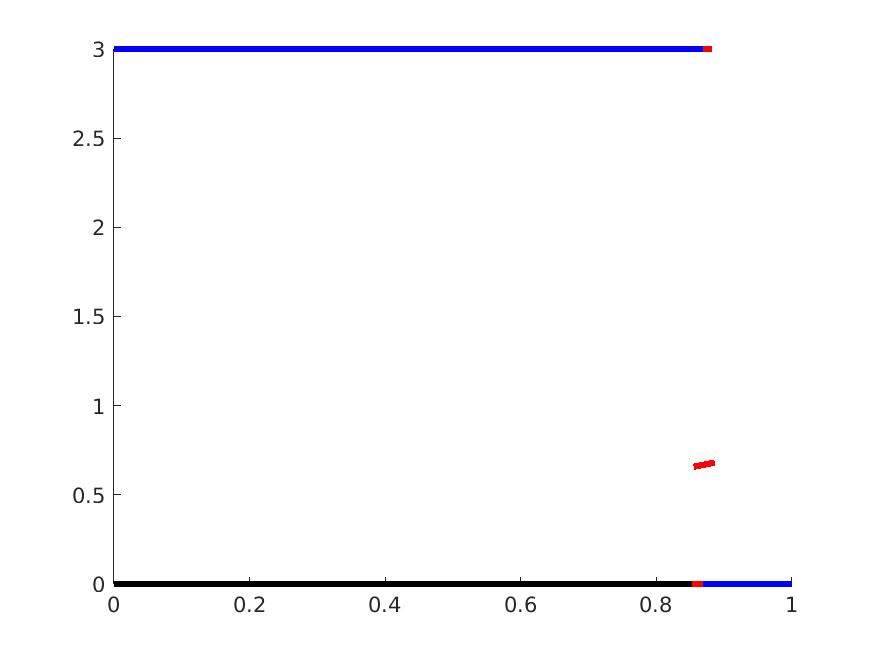}
              \caption{$t=0.3$}
            \end{subfigure}            
          \end{center}
\end{figure}

\subsection{Experiment including a rapidly decreasing piece}\label{sec:numdp}
In this experiment, the computational space-time domain is $[0,1]\times[0,.22]$ and the initial data are given by 
\begin{equation}
    \bar u^0(x)= \left\{\begin{array}{ccc}
        2.5 & \text{if} & x < 0.2\\
        3.5 -5x & \text{if} & 0.2 < x < 0.4\\
        1.1 + x &\text{if} & 0.4 < x < 0.625\\
        0 & \text{if} & x > 0.625
    \end{array}
    \right.
\end{equation}

The time interval in the example is long enough for the ``boundary'' shocks of the rapidly decreasing piece to collide (in the exact solution and in the computer).

In \Cref{tab:2}, we report $\max_\alpha \Upsilon_\alpha(T)$, $\Gamma$ and $\Delta_{\text{inner}}$ for the rapidly decreasing piece for a sequence of meshes. Note that these terms scale as predicted in \Cref{sec:crs}.
In \Cref{tab:3}, we also report the upper bounds on $\sup_{t \in [0,T]} \|\bar u (\cdot, t) - \hat \psi(\cdot, t)\|_{L^2}$ and $\sup_{t \in [0,T]}\|\bar u (\cdot, t) - \hat u(\cdot, t)\|_{L^1}$ provided by our analysis as well as $\| \hat u_{\text{fine}} (\cdot, T) - \hat u(\cdot, T)\|_{L^1}$, where  $\hat u_{\text{fine}}$ is a finite volume solution on a much finer mesh and we use this as an approximation of the ``true'' value of $\|\bar u (\cdot, T) - \hat u(\cdot, T)\|_{L^1}$.
Note that in this experiment the simulation ``sees'' the kinks in the extensions of the increasing pieces (as discussed in \Cref{sec:op-ed}, above). Thus $\sup_{t \in [0,T]} \|\bar u (\cdot, t) - \hat \psi(\cdot, t)\|_{L^2}$ is $\mathcal{O}(h^{3/4})$ but not $\mathcal{O}(h)$ as in the previous experiment.

We also note that the bound for $\sup_{t \in [0,T]} \| \bar u (\cdot, t) - \hat u(\cdot, t)\|_{L^1}$ decreases much more slowly than anticipated. This can be understood as follows: 

One major source of position uncertainty of shocks is potential velocity errors during time intervals  when it is not clear whether ``boundary'' shocks have collided in $\hat \psi$.
The length of these time intervals of uncertainty does not show the scaling behavior we expect. We attribute this to the strong time dependence of our bounds on the position error of shocks.
Indeed, if we plot $\Delta_{\text{inner}}$ (see \eqref{eq:puis})  we see that for any fixed $t$ we have 
$\Delta_{\text{inner}}(t)= \mathcal{O}(h^{1/4})$ but the constant grows quickly in time.

In order to understand how this time-dependence leads to the observed behavior of the lengths of the time intervals of uncertainty mentioned above, let us consider, as an heuristic example,
 the time when two shocks actually collide in $\hat\psi$ compared to the time
 when our code concludes that a \emph{collision might have happened}. Remark this is a subset of the length of time during which the code is uncertain whether or not a collision has occurred in $\hat\psi$. Suppose the initial position difference of the shocks is $b$ and their speed difference is the constant value $a$. Then they collide at time $\tfrac{b}{a}$ in the exact solution. Heuristically, we suppose $a$ and $b$ are reproduced accurately by our code and we will ignore any errors in this regard in the current discussion.
 If our bounds on position errors of shocks were independent of time, then 
the time $t^*$, when the code concludes that a collision might have happened is (approximately) determined as the solution of
\begin{equation}
    b = a t^* + c(h),
\end{equation}
where $c(h)$ denotes our bound on the sum of the position errors of the two shocks. Then, 
$t^*= \frac{b - c(h)}{a}$ and thus the length of the time between when the code determines a collision might have occurred, and when it actually does occur in $\hat\psi$ is  given by 
\[\frac{b}{a} - \frac{b - c(h)}{a} = \frac{c(h)}{a},\] 
i.e. a quantity that will exactly reproduce the scaling behaviour of $c(h)$.

In contrast, suppose that our bounds on position errors of shocks depend on time linearly. Then $t^*$ is given as the solution of
\begin{equation}\label{eq:tint}
    b = a t^* + c(h) t^*
\end{equation}
i.e., 
$t^*= \frac{b }{a+  c(h)}$.

Thus, in this case the length of time between when the code determines a collision might have occurred in $\hat\psi$, and when it actually occurs in $\hat\psi$, is given by 
\[\frac{b}{a} - \frac{b }{a+ c(h)} = \frac{b c(h)}{a+ c(h)}.\] 
This quantity shows the same behavior as $c(h)$ when $c(h)$ is small (compared to $a$) but not when $c(h)$ is not small: in the latter (pre-asymptotic) case the quantity from \eqref{eq:tint} decays much more slowly than $c(h)$.
Analogous considerations apply when the time dependence is nonlinear.



In our numerical experiments, the length of the interval of collision uncertainty is clearly still in the pre-asymptotic regime, where its experimental convergence rate is less than that of $c(h)$.
The argument above shows that asymptotically for $h \rightarrow 0$ (where also $c(h) \rightarrow 0$)
we can expect the length of the time interval of uncertainty to behave like $c(h)$, i.e., to be $\mathcal{O}(h^{1/4})$.

The long computing times of our code and the low convergence rate of $\Delta_{\text{inner}}$ have convinced us not to pursue computations where the asymptotic behavior of our estimate for $\sup_{t \in [0,T]}\|\bar u (\cdot, t) - \hat u(\cdot, t)\|_{L^1}$ can be seen.

\begin{table}[!htb]
  \begin{center}
    \begin{tabular}{c|c|c|c|c|c|c}
      $\# cells$ &
      $\!\max_\alpha \Upsilon_\alpha\!$ &EoC &
      $\Gamma$  & EoC &
      $\Delta_{\text{inner}} $ & EoC 
      \\
      \hline
     3200 & 0.099 & ---  & 0.213  & ---  & 0.292 & ---    \\
     6400 & 0.073 & 0.44 & 0.156  & 0.45 & 0.249 & 0.23  \\
    12800 & 0.050 & 0.55 & 0.106  & 0.56 & 0.206 & 0.27   \\
    25600 & $0.036$   & 0.47  & $0.075$    & 0.50  & 0.174   & 0.24    \\
    \end{tabular}
  \end{center}
  \caption{Observed values of $\max_\alpha \Upsilon_\alpha$, $\Gamma$, and $\Delta_{\text{inner}}$ (see \eqref{eq:puis}) for the example with one rapidly decreasing piece.}
  \label{tab:2}
\end{table}
\begin{table}[!htb]
  \begin{center}
    \begin{tabular}{c|c|c|c|c|c|c}
      $\# cells$ &
      $\| \hat u- \hat{u}_{\text{fine}}\|_{L^1} $ & EoC &
      $\|\bar u - \hat u\|_{L^1}\! $& EoC  &
      $\|\bar u - \hat \psi\|_{L^2}\! $& EoC  
      \\
      \hline
     3200 & $1.07 \cdot 10^{-3}$ & ---  &  2.61  & ---  & $2.6 \cdot 10^{-3}$ & ---  \\
     6400 & $4.97 \cdot 10^{-4}$ & 1.1 &  2.55  & 0.03 & $1.5 \cdot 10^{-3}$ & 0.79 \\
    12800 &  $2.40 \cdot 10^{-4}$ & 1.0&  2.47  & 0.05 & $9.1 \cdot 10^{-4}$ & 0.72 \\
    25600 &  $1.22 \cdot 10^{-4}$   & 1.0  &     2.38   &   0.05   & $5.4 \cdot 10^{-4}$&0.75  \\
    \end{tabular}
  \end{center}
  \caption{Error estimator results for the example with one rapidly decreasing piece: estimates on $\|\bar u - \hat u\|_{L^1}\! $ and  $\|\bar u - \hat \psi\|_{L^2}\! $. We also include the values of $\| \hat u- \hat{u}_{\text{fine}}\|_{L^1} $.} 
  \label{tab:3}
\end{table}

\appendix
\section{ }

\subsection{Defining the extensions to make the $v_i^0$. }\label{ext_section}

\begin{figure}[tb]
      \includegraphics[width=.8\textwidth]{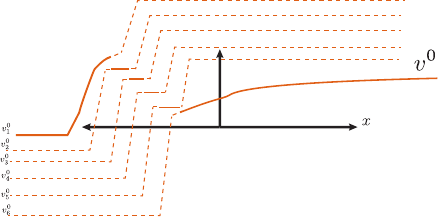}
  \caption{Given a function $v^0\in\inise$, in the present paper we create piecewise-linear artificial extensions defined on $\mathbb R$. In this diagram, the extensions are denoted by dotted lines (note the diagram is not to scale).}\label{ext_fig}
\end{figure}

We use the process of creating artificial extensions as given in \cite{2017arXiv170905610K}. However, we modify the procedure given in \cite{2017arXiv170905610K}. Unlike in \cite{2017arXiv170905610K}, we want the artificial extensions to have a pointwise lower bound on their difference (see \eqref{ordering_vs}, below). We also want our extensions to be $C^1$, at least in regions where we expect the extensions might be revealed (however, even with the extensions constructed in this section, it is still possible that nondifferentiable points, i.e. kinks, are revealed, see the discussion in \Cref{sec:op-ed}).  See \Cref{ext_fig}.

Consider initial data $v^0\in\inise$ (see \eqref{LIWASe}). 

We use the $N\in \mathbb{N}$ and  $-\infty=x_0<x_1<\cdots<x_{N}<x_{N+1}=+\infty$ from within the context of \eqref{LIWASe}.

We also define the one-sided derivatives 
\begin{align}\label{define_sided_deriv}
    \frac{\mathrm{d} \pm}{\mathrm{d}x} v^0(x) \coloneqq \lim_{y\to x^{\pm}} \frac{v^0(y)-v^0(x)}{y-x}.
\end{align} 

\begin{claim}
    For $i=1,\ldots,N$, the one-sided derivatives $\frac{\mathrm{d} \pm}{\mathrm{d}x} v^0(x_i)$ exist by the definition of  $\inise$. 
\end{claim}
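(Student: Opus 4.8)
The plan is to unpack the definition of $\inise$ in \eqref{LIWASe} and reduce the existence of the one-sided derivatives at $x_i$ (in the sense of \eqref{define_sided_deriv}) to the existence of one-sided limits of the classical interior derivative $(v^0)'$ — and those one-sided limits exist because, by hypothesis, $(v^0)'$ is \emph{uniformly} continuous on each piece $(x_j,x_{j+1})$, not merely defined there. It is this uniform continuity, rather than mere differentiability, that is genuinely used: a bare derivative can fail to be continuous.

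First I would fix $i\in\{1,\dots,N\}$ and handle the right derivative $\frac{\mathrm{d}+}{\mathrm{d}x}v^0(x_i)$; the left derivative is symmetric, with the piece $(x_{i-1},x_i)$ in place of $(x_i,x_{i+1})$. Since $(v^0)'$ is uniformly continuous on $(x_i,x_{i+1})$, restricting to a bounded one-sided neighbourhood $(x_i,x_i+\rho)$ of $x_i$ (harmless even when $i=N$, where $x_{i+1}=+\infty$) the Cauchy criterion yields the limit $L_i^+:=\lim_{x\to x_i^+}(v^0)'(x)$. In particular $(v^0)'$ is bounded near $x_i^+$, so $v^0$ is Lipschitz on $(x_i,x_i+\rho)$ and extends continuously to $[x_i,x_i+\rho)$; this continuous extension realizes the right trace, i.e. $\lim_{y\to x_i^+}v^0(y)=v^0(x_i+)$. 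One should note here that, since $v^0\in L^\infty$ is only defined a.e., the value $v^0(x_i)$ in the numerator of \eqref{define_sided_deriv} is to be read as $v^0(x_i+)$ for the $+$ derivative and as $v^0(x_i-)$ for the $-$ derivative — and these traces exist for the same Lipschitz reason.

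To finish I would apply the mean value theorem on $(x_i,y)$ for $y\in(x_i,x_i+\rho)$: the difference quotient $\frac{v^0(y)-v^0(x_i+)}{y-x_i}$ equals $(v^0)'(\xi_y)$ for some $\xi_y\in(x_i,y)$, and as $y\to x_i^+$ necessarily $\xi_y\to x_i^+$, so the difference quotient converges to $L_i^+$; hence $\frac{\mathrm{d}+}{\mathrm{d}x}v^0(x_i)$ exists and equals $L_i^+$. Repeating the same three steps on $(x_{i-1},x_i)$, approaching $x_i$ from the left and using the left trace $v^0(x_i-)$, gives $\frac{\mathrm{d}-}{\mathrm{d}x}v^0(x_i)$.

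There is no real obstacle; the only points requiring a word of care are the two already flagged — localization to a bounded neighbourhood when the adjacent piece is a half-line ($i=1$ or $i=N$), and the interpretation of $v^0(x_i)$ in \eqref{define_sided_deriv} via the appropriate one-sided trace. The analogous, and easier, statement that the two-sided derivative $\frac{\mathrm{d}}{\mathrm{d}x}v^0(x)$ exists at every interior $x\in(x_i,x_{i+1})$ and equals $(v^0)'(x)$ is immediate from the definition of $\inise$.
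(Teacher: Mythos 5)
Your argument is correct and is essentially the paper's: both proofs exploit the uniform continuity of $(v^0)'$ on each open piece to obtain the one-sided limit $L_i^\pm = \lim_{y\to x_i^\pm}(v^0)'(y)$, and then pass this to the difference quotient — the paper does so via L'H\^opital, you via the mean value theorem, which is the same mechanism at this level. Your extra care about reading $v^0(x_i)$ as the appropriate one-sided trace and about bounding the half-line pieces ($i=1$, $i=N$) is a welcome tightening of what the paper leaves implicit, but it does not change the route.
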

\begin{claimproof}
    Let $x_i$ be the left (right) endpoint of a small open interval $I\subset\mathbb{R}$. By the definition of  $\inise$, $(v^0)'$ is uniformly continuous on $I$. Thus, the derivative (uniquely) extends to a uniformly continuous function $D\colon \bar{I}\to\mathbb{R}$. Then by L'Hôpital's rule, 
    \begin{align}
        \frac{\mathrm{d} \pm}{\mathrm{d}x} v^0({x_i}) &= \lim_{y\to {x_i}^{\pm}} \frac{v^0(y)-v^0({x_i})}{y-{x_i}}
        =\lim_{y\to {x_i}^{\pm}} \frac{(v^0)'(y)}{1}=D({x_i}),
    \end{align}
    with $+$ for the left endpoint and $-$ for the right endpoint.
\end{claimproof}

Define the slope

\begin{align}\label{slopeM}
    M\coloneqq \max\Bigg\{1,\max_{k=0,\ldots,N} \text{Lip}[v^0 \restriction_{(x_k,x_{k+1})}]\Bigg\}.
\end{align}

If $v^0$ contains at least one discontinuity, we define
\begin{align}
v_1^0(x)\coloneqq
\begin{cases}
v^0(x), &\mbox{ if } x < x_1,\\
v^0(x_1-)+(x-x_1)\frac{\mathrm{d} -}{\mathrm{d}x} v^0(x_1), &\mbox{ if }   x_1 < x < x_I^1,\\
v^0(x_1-)+(x_I^1-x_1)\frac{\mathrm{d} -}{\mathrm{d}x} v^0(x_1)+(x-x_I^1) M , &\mbox{ if }   x_I^1 < x < x_R^1,\\
\max\Big\{v^0(x_1-)+(x_I^1-x_1)\frac{\mathrm{d} -}{\mathrm{d}x}v^0(x_1),\\
\sup_{x\in(x_1,\infty)}v^0(x)+\sum_{k=1}^{N}(v^0(x_k-)-v^0(x_k+))\\+\sum_{k=2}^{N}\Big((x_I^k-x_k)\abs{\frac{\mathrm{d} -}{\mathrm{d}x}v^0(x_k)}\Big)\Big\}, &\mbox{ if }  x_R^1 < x  ,
\end{cases}
\end{align}
where the $x_i$ are as in \eqref{LIWASe}. The number $x_I^1$ is defined as

\begin{align}
    x_I^1\coloneqq  x_1+\min\Bigg\{1,\frac{v^0(x_1-)-v^0(x_1+)}{2\Big(M+\abs{\frac{\mathrm{d} -}{\mathrm{d}x} v^0(x_1)}\Big)}\Bigg\}.
\end{align}

If 
\begin{multline}
\sup_{x\in(x_1,\infty)}v^0(x)+\sum_{k=1}^{N}(v^0(x_k-)-v^0(x_k+))
\\
+\sum_{k=2}^{N}\Big((x_I^k-x_k)\abs{\frac{\mathrm{d} -}{\mathrm{d}x}v^0(x_k)}\Big)< v^0(x_1-)+(x_I^1-x_1)\frac{\mathrm{d} -}{\mathrm{d}x}v^0(x_1)
\end{multline}
then we define $x_R^1\coloneqq x_I^1$. Otherwise, we let $x_R^1$ be defined as the solution to
\begin{multline}
v^0(x_1-)+(x_I^1-x_1)\frac{\mathrm{d} -}{\mathrm{d}x}v^0(x_1)+(x_R^1-x_I^1) M=
\\
\sup_{x\in(x_1,\infty)}v^0(x)+\sum_{k=1}^{N}(v^0(x_k-)-v^0(x_k+))
+\sum_{k=2}^{N}\Big((x_I^k-x_k)\abs{\frac{\mathrm{d} -}{\mathrm{d}x}v^0(x_k)}\Big).
\end{multline}

 Similarly, for $i=2,\ldots, N$, we also define
\begin{align}\label{middle_case}
v_i^0(x)\coloneqq
\begin{cases}
\min\Big\{v^0(x_{i-1}+)+(x_J^i-x_{i-1})\frac{\mathrm{d}+}{\mathrm{d}x}v^0(x_{i-1}),
\\\inf_{x\in(-\infty,x_{i-1})}v^0(x)-\sum_{k=1}^{i-1}(v^0(x_k-)-v^0(x_k+)) \\-\sum_{k=3}^{i} \Big((x_J^{k-1}-x_{k-2})\abs{\frac{\mathrm{d}+}{\mathrm{d}x}v^0(x_{k-2})}\Big)\Big\}, &\mbox{ if }  x < x_L^i,\\
v^0(x_{i-1}+)+(x_J^i-x_{i-1})\frac{\mathrm{d}+}{\mathrm{d}x}v^0(x_{i-1})+(x-x_J^i) M , &\mbox{ if } x_L^i < x <  x_J^i ,\\
v^0(x_{i-1}+)+(x-x_{i-1})\frac{\mathrm{d}+}{\mathrm{d}x}v^0(x_{i-1}), &\mbox{ if } x_J^i < x < x_{i-1},\\
v^0(x) , &\mbox{ if }  x_{i-1}<x<x_i,\\
v^0(x_i-)+(x-x_i) \frac{\mathrm{d}-}{\mathrm{d}x}v^0(x_{i}), &\mbox{ if }  x_i < x < x_I^i,\\
v^0(x_i-)+(x_I^i-x_i) \frac{\mathrm{d}-}{\mathrm{d}x}v^0(x_{i})+(x-x_I^i ) M , &\mbox{ if }  x_I^i < x < x_R^i,\\
\max\Big\{v^0(x_i-)+(x_I^i-x_i) \frac{\mathrm{d}-}{\mathrm{d}x}v^0(x_{i}),\\ \sup_{x\in(x_i,\infty)}v^0(x)+\sum_{k=i}^{N}(v^0(x_k-)-v^0(x_k+))\\+\sum_{k={i+1}}^{N}\Big((x_I^k-x_k)\abs{\frac{\mathrm{d} -}{\mathrm{d}x}v^0(x_k)}\Big)\Big\}, &\mbox{ if }   x_R^i < x.
\end{cases}
\end{align}

We define $\sum_\alpha^\beta \coloneqq 0$, for any indices $\beta<\alpha$.

The number $x_J^i$ is defined as

\begin{align}\label{x_J_def}
    x_J^i\coloneqq  x_{i-1}-\min\Bigg\{1,\frac{v^0(x_{i-1}-)-v^0(x_{i-1}+)}{2\Big(M+\abs{\frac{\mathrm{d} +}{\mathrm{d}x} v^0(x_{i-1})}\Big)}\Bigg\}.
\end{align}

The number $x_I^i$ is defined as

\begin{align}\label{x_I_def}
    x_I^i\coloneqq  x_{i}+\min\Bigg\{1,\frac{v^0(x_{i}-)-v^0(x_{i}+)}{2\Big(M+\abs{\frac{\mathrm{d} -}{\mathrm{d}x} v^0(x_{i})}\Big)}\Bigg\}.
\end{align}

If 
\begin{multline}
\sup_{x\in(x_i,\infty)}v^0(x)+\sum_{k={i}}^{N}(v^0(x_k-)-v^0(x_k+))\\+\sum_{k={i+1}}^{N}\Big((x_I^k-x_k)\abs{\frac{\mathrm{d} -}{\mathrm{d}x}v^0(x_k)}\Big) < v^0(x_i-)+(x_I^i-x_i) \frac{\mathrm{d}-}{\mathrm{d}x}v^0(x_{i})
\end{multline}
then we define $x_R^i\coloneqq x_i$. Otherwise, we let $x_R^i$ be defined as the solution to
\begin{multline}
v^0(x_i-)+(x_I^i-x_i) \frac{\mathrm{d}-}{\mathrm{d}x}v^0(x_{i})+(x_R^i-x_i) M
=
\\
\sup_{x\in(x_i,\infty)}v^0(x)+\sum_{k=i}^{N}(v^0(x_k-)-v^0(x_k+))\\+\sum_{k={i+1}}^{N}\Big((x_I^k-x_k)\abs{\frac{\mathrm{d} -}{\mathrm{d}x}v^0(x_k)}\Big).
\end{multline}

If 
\begin{multline}
\inf_{x\in(-\infty,x_i)}v^0(x)-\sum_{k=1}^{i-1}(v^0(x_k-)-v^0(x_k+))\\-\sum_{k=3}^{i} \Big((x_J^{k-1}-x_{k-2})\abs{\frac{\mathrm{d}+}{\mathrm{d}x}v^0(x_{k-2})}\Big)>v^0(x_{i-1}+)+(x_J^i-x_{i-1})\frac{\mathrm{d}+}{\mathrm{d}x}v^0(x_{i-1})
\end{multline}
then we define $x_L^i\coloneqq x_J^i$. Otherwise, we let $x_L^i$ be defined as the solution to 

\begin{multline}
v^0(x_{i-1}+)+(x_J^i-x_{i-1})\frac{\mathrm{d}+}{\mathrm{d}x}v^0(x_{i-1})+(x_L^i-x_J^i) M
=
\\
\inf_{x\in(-\infty,x_i)}v^0(x)- \sum_{k=1}^{i-1}(v^0(x_k-)-v^0(x_k+))-\sum_{k=3}^{i} \Big((x_J^{k-1}-x_{k-2})\abs{\frac{\mathrm{d}+}{\mathrm{d}x}v^0(x_{k-2})}\Big).
\end{multline}

Finally, we define
\begin{align}
v_{N+1}^0(x)\coloneqq
\begin{cases}
\min\Big\{v^0(x_{N}+)+(x_J^N-x_{N}) \frac{\mathrm{d}+}{\mathrm{d}x}v^0(x_N),\\ \inf_{x\in(-\infty,x_N)}v^0(x)-\sum_{k=1}^{N}(v^0(x_k-)-v^0(x_k+))\\-\sum_{k=3}^{N+1} \Big((x_J^{k-1}-x_{k-2})\abs{\frac{\mathrm{d}+}{\mathrm{d}x}v^0(x_{i-2})}\Big)\Big\}, &\mbox{ if }  x < x_L^N,\\
v^0(x_{N}+)+(x_J^N-x_{N}) \frac{\mathrm{d}+}{\mathrm{d}x}v^0(x_N) +(x-x_J^N) M , &\mbox{ if } x_L^N < x < x_J^N,\\
v^0(x_{N}+)+(x-x_{N}) \frac{\mathrm{d}+}{\mathrm{d}x}v^0(x_N) , &\mbox{ if } x_J^N < x < x_N,\\
v^0(x), &\mbox{ if } x >x_N.
\end{cases}
\end{align}

The number $x_J^N$ is defined as

\begin{align}
    x_J^N\coloneqq  x_{N}-\min\Bigg\{1,\frac{v^0(x_{N}-)-v^0(x_{N}+)}{2\Big(M+\abs{\frac{\mathrm{d} +}{\mathrm{d}x} v^0(x_{N})}\Big)}\Bigg\}.
\end{align}

If 
\begin{multline}
\inf_{x\in(-\infty,x_N)}v^0(x)-\sum_{k=1}^{N}(v^0(x_k-)-v^0(x_k+))\\-\sum_{k=3}^{N+1} \Big((x_J^{k-1}-x_{k-2})\abs{\frac{\mathrm{d}+}{\mathrm{d}x}v^0(x_{i-2})}\Big)>v^0(x_{N}+)+(x_J^N-x_{N}) \frac{\mathrm{d}+}{\mathrm{d}x}v^0(x_N)
\end{multline}
then we define $x_L^N\coloneqq x_J^N$. Otherwise, we let $x_L^N$ be defined as the solution to 

\begin{multline}
v^0(x_{N}+)+(x_J^N-x_{N}) \frac{\mathrm{d}+}{\mathrm{d}x}v^0(x_N)+(x_L^N-x_{N}) M
=
\\
\inf_{x\in(-\infty,x_N)}v^0(x)- \sum_{k=1}^{N}(v^0(x_k-)-v^0(x_k+))\\-\sum_{k=3}^{N+1} \Big((x_J^{k-1}-x_{k-2})\abs{\frac{\mathrm{d}+}{\mathrm{d}x}v^0(x_{i-2})}\Big).
\end{multline}

Note that by construction, the $v_i^0$ are Lipschitz-continuous, satisfy $\inf_{x}\partial_x v_i^0 \geq \inf_{x}\partial_x v^0$, and verify 
\begin{align}\label{ordering_vs}
v_i^0(x)-v_{i+1}^0(x)\geq \frac{1}{2}(v^0(x_i -)-v^0(x_i +))
\end{align}
 for $i=1,\ldots N$ and all $x\in\mathbb{R}$. 
 
Finally, if $v^0$ contains no discontinuities, then $N=0$ and we define $v_1^0\coloneqq v^0$.


\subsection{Proof of \Cref{global_entropy_dissipation_rate_systems}}\label{sec:proof_global_entropy_dissipation_rate_systems}

We now prove \eqref{combined1}.

We perform a similar derivation as outlined in \cite[Lemma 3.2]{scalar_move_entire_solution}.

Note that $\hat{u}_1$ is Lipschitz and so we have the exact equalities,
\begin{align}
\partial_t\big(\hat{u}_1(x,t)\big)+\partial_x\big(A(\hat{u}_1(x,t))\big)&=R_1(x,t),\label{solves_equation_shift}\\
\partial_t\big(\eta(\hat{u}_1(x,t))\big)+\partial_x\big(q(\hat{u}_1(x,t))\big)&=\eta'(\hat{u}_1(x,t))R_1(x,t).\label{solves_entropy_shift}
\end{align}

Recall \eqref{Z_def}, i.e.
$
A(u|\hat{u}_1)\coloneqq A(u)-A(\hat{u}_1)-A' (\hat{u}_1)(u-\hat{u}_1).$
In particular,  $A(u|\hat{u}_1)$ is locally quadratic in $u-\hat{u}_1$. 

Fix any positive, Lipschitz-continuous test function $\phi\colon\mathbb{R}\times[0,T)\to\mathbb{R}$ with compact support. Then, we use that $\hat{u}_2$ satisfies the entropy inequality in a distributional sense:
 \begin{multline}\label{u_entropy_integral_formulation}
\int\limits_{0}^{T} \int\limits_{-\infty}^{\infty}\Bigg[\partial_t\phi\big(\eta(\hat{u}_2(x,t))\big)+\partial_x \phi \big(q(\hat{u}_2(x,t))\big)\Bigg]\,dxdt+ \int\limits_{-\infty}^{\infty}\phi(x,0)\eta(\hat{u}_2^0(x))\,dx
\\ \geq -\int\limits_0^T\int_{-\infty}^{\infty}\phi\eta'(\hat{u}_2(x,t))R_2(x,t)\,dxdt.
\end{multline}

 We also view \eqref{solves_entropy_shift} as a distributional equality:
 \begin{multline}\label{solves_entropy_shift_integral_formulation}
\int\limits_{0}^{T} \int\limits_{-\infty}^{\infty}\Bigg[\partial_t\phi\big(\eta(\hat{u}_1(x,t))\big)+\partial_x \phi \big(q(\hat{u}_1(x,t))\big)\Bigg]\,dxdt+ \int\limits_{-\infty}^{\infty}\phi(x,0)\eta(\hat{u}_1^0(x))\,dx
\\
= -\int\limits_{0}^{T} \int\limits_{-\infty}^{\infty}\phi\eta'(\hat{u}_1(x,t))R_1(x,t)\,dxdt.
\end{multline}

To get \eqref{solves_entropy_shift_integral_formulation}, we do integration by parts on \eqref{solves_entropy_shift}.

 We subtract \eqref{solves_entropy_shift_integral_formulation} from \eqref{u_entropy_integral_formulation}, to get

\begin{equation}
\begin{aligned}\label{difference_entropy_equations}
&\int\limits_{0}^{T} \int\limits_{-\infty}^{\infty} [\partial_t \phi \eta(\hat{u}_2(x,t)|\hat{u}_1(x,t))+\partial_x \phi q(\hat{u}_2(x,t);\hat{u}_1(x,t))]\,dxdt \\
&\hspace{2in}+  \int\limits_{-\infty}^{\infty}\phi(x,0)\eta(\hat{u}_2^0(x)|\hat{u}_1^0(x))\,dx \\
&\hspace{.5in}\geq -\int\limits_{0}^{T} \int\limits_{-\infty}^{\infty} \Big(\partial_t \phi\eta'(\hat{u}_1(x,t))[\hat{u}_2(x,t)-\hat{u}_1(x,t)]
\\
&\hspace{1.5in}+\partial_x \phi \eta'(\hat{u}_1(x,t))[A(\hat{u}_2(x,t))-A(\hat{u}_1(x,t))]\Big)\,dxdt 
\\
&\hspace{.5in}- \int\limits_{-\infty}^{\infty}\phi(x,0)\eta'(\hat{u}_1^0(x))[\hat{u}_2^0(x)-\hat{u}_1^0(x)]\,dx
\\
&\hspace{.5in}+
\int\limits_{0}^{T} \int\limits_{-\infty}^{\infty}\phi\Bigg[\eta'(\hat{u}_1(x,t))R_1(x,t)-\eta'(\hat{u}_2(x,t))R_2(x,t)\Bigg]\,dxdt
\end{aligned}
\end{equation}

The function $\hat{u}_2$ is an entropy solution to the  scalar  conservation law. Thus, for every Lipschitz-continuous test function $\Phi\colon\mathbb{R}\times[0,T)\to \mathbb{R}$ with compact support,
\begin{equation}
    \label{u_solves_equation_integral_formulation}
\int\limits_{0}^{T} \int\limits_{-\infty}^{\infty} \Bigg[\partial_t\Phi \hat{u}_2 + \partial_x\Phi A(\hat{u}_2) \Bigg]\,dxdt +\int\limits_{-\infty}^{\infty} \Phi(x,0)\hat{u}_2^0(x)\,dx
=-\int\limits_{0}^{T} \int\limits_{-\infty}^{\infty}\Phi R_2(x,t)\,dxdt.
\end{equation}

We also can rewrite \eqref{solves_equation_shift} in a distributional way,
\begin{multline}\label{shift_solves_equation_integral_formulation}
\int\limits_{0}^{T} \int\limits_{-\infty}^{\infty} \Bigg[\partial_t\Phi \hat{u}_1(x,t) + \partial_x\Phi A(\hat{u}_1(x,t)) \Bigg]\,dxdt +\int\limits_{-\infty}^{\infty} \Phi(x,0)\hat{u}_1^0(x)\,dx
\\
=-\int\limits_{0}^{T} \int\limits_{-\infty}^{\infty}\Phi R_1(x,t)\,dxdt.
\end{multline}
To prove \eqref{shift_solves_equation_integral_formulation}, we again do integration by parts twice.
Then, we can choose 
\begin{align}\label{our_choice_for_Phi}
\Phi = \phi\eta'(\hat{u}_1)
\end{align} 
 and subtract \eqref{shift_solves_equation_integral_formulation} from \eqref{u_solves_equation_integral_formulation}. 
This yields,
\begin{multline}\label{difference_of_solutions_equation}
\int\limits_{0}^{T} \int\limits_{-\infty}^{\infty} \Bigg[\partial_t[\phi\eta'(\hat{u}_1(x,t))][\hat{u}_2(x,t)- \hat{u}_1(x,t)] 
\\
+\partial_x[\phi\eta'(\hat{u}_1(x,t))][A(\hat{u}_2(x,t))-A(\hat{u}_1(x,t))] \Bigg]\,dxdt 
\\+\int\limits_{-\infty}^{\infty} \phi(x,0)\eta'(\hat{u}_1^0(x))[\hat{u}_2^0(x)-\hat{u}_1^0(x)]\,dx
\\
=\int\limits_{0}^{T} \int\limits_{-\infty}^{\infty}\phi\eta'(\hat{u}_1(x,t)) \Bigg[R_1(x,t)-R_2(x,t)\Bigg]\,dxdt.
\end{multline}

Recall $\hat{u}_1$ is a classical solution and verifies \eqref{solves_equation_shift}. Thus,
\begin{multline}\label{notice_this}
\partial_t\big(\eta'(\hat{u}_1(x,t))\big)=\partial_t\hat{u}_1(x,t)\eta''(\hat{u}_1(x,t))
\\
=\Bigg(-\partial_x \hat{u}_1(x,t)\big[A'(\hat{u}_1(x,t))\big]+R_1(x,t)\Bigg)\eta''(\hat{u}_1(x,t))
\\
=R_1(x,t)\eta''(\hat{u}_1(x,t))-\partial_x \hat{u}_1(x,t)\eta''(\hat{u}_1(x,t))A'(\hat{u}_1(x,t)).
\end{multline}

Thus, by \eqref{notice_this} and the definition of the relative flux in \eqref{Z_def},

\begin{multline}\label{5.2.9}
\partial_t\big(\eta'(\hat{u}_1(x,t))\big)[\hat{u}_2(x,t)-\hat{u}_1(x,t)]
+\partial_x\big(\eta'(\hat{u}_1(x,t))\big)[A(\hat{u}_2(x,t))-A(\hat{u}_1(x,t))]
\\
=
\partial_x \hat{u}_1(x,t)\eta''(\hat{u}_1(x,t)) A(\hat{u}_2(x,t)|\hat{u}_1(x,t))
+R_1(x,t)\eta''(\hat{u}_1(x,t))[\hat{u}_2(x,t)-\hat{u}_1(x,t)].
\end{multline}

We combine \eqref{difference_entropy_equations}, \eqref{difference_of_solutions_equation}, and \eqref{5.2.9} to get \eqref{combined1}.

\bibliographystyle{plain}
\bibliography{references}
\end{document}